\newtcolorbox{mybox}[1]{colback=red!5!white, colframe=red!75!black,fonttitle=\bfseries, title=#1}
\def\ML{\MoveEqLeft}
\definecolor{mycolor}{rgb}{0.15,0.56,0.25}
\newcommand{\ignore}[1]{}{}
\newcommand{\hatK}{\hat{K}}
\newcommand{\lbl}{\label}
\newcommand{\nn}{}
\newtheorem{theorem}{Theorem}[section]
\newtheorem{corollary}{Corollary}[section]
\newtheorem{proposition}{Proposition}[section]
\newtheorem{lemma}{Lemma} [section]
\theoremstyle{remark}
\newtheorem{remark}{Remark}[section]
\newtheorem{example}{Example}[section]
\newlist{condition}{enumerate}{10}
\setlist[condition]{label*=\textrm{({A}\arabic*)},ref=\textup{({A}\arabic*)}}
\newlist{condition2}{enumerate}{10}
\setlist[condition2]{label*=(\arabic{section}.\arabic{subsection}.\roman*)}
\newlist{condition3}{enumerate}{10}
\setlist[condition3]{label*=({B}\arabic*)}
\crefname{equation}{}{}
\crefname{subsection}{Subsection}{Subsections}
\crefname{conditioni}{condition}{conditions}
\Crefname{conditioni}{Condition}{Conditions}
\crefname{condition2i}{condition}{conditions}
\Crefname{condition2i}{Condition}{Conditions}
\crefname{condition3i}{condition}{conditions}
\Crefname{condition3i}{Condition}{Conditions}
\newcommand{\eq}[1]{ (\ref{#1})}
\newcommand{\beqn}{\begin{eqnarray}}
\newcommand{\eeqn}{\end{eqnarray}}
\newcommand{\beq}{\begin{eqnarray*}}
\newcommand{\eeq}{\end{eqnarray*}}
\newcommand{\bea}{\begin{eqnarray}}
\newcommand{\eea}{\end{eqnarray}}
\newcommand{\bei}{\begin{itemize}}
\newcommand{\eei}{\end{itemize}}
\newcommand{\ben}{\begin{enumerate}}
\newcommand{\een}{\end{enumerate}}
\newcommand{\bet}{\begin{theorem}}
\newcommand{\eet}{\end{theorem}}
\newcommand{\bel}{\begin{lemma}}
\newcommand{\eel}{\end{lemma}}
\newcommand{\bep}{\begin{proposition}}
\newcommand{\eep}{\end{proposition}}
\newcommand{\bed}{\begin{definition}}
\newcommand{\eed}{\end{definition}}
\newcommand{\bec}{\begin{corollary}}
\newcommand{\eec}{\end{corollary}}
\newcommand{\bex}{\begin{example}}
\newcommand{\eex}{\end{example}}
\newcommand{\sgn}{\mathop{\rm sgn}\nolimits}
\def\lme{\left(}
\def\rme{\right)}
\def\w0{ \I(W>0) }
\def\intud{\int_{|u|\leq \delta}}
\def\inttd{\int_{|t|\leq \delta}}
\def\0{\boldsymbol{0}}
\def\F{\boldsymbol{F}}
\def\R{\mathbb{R}}
\def\D{\boldsymbol{\textbf{D}}}
\def\A{\boldsymbol{A}}
\def\I{I}
\def\0{\boldsymbol{0}}
\def\E{\mathop{}\!\mathrm{E}\,\!}
\def\P{{\rm P}\,\!}
\def\M{\mathcal{M}}
\newcommand{\conep}[3][]{\E_{#1}\left( #2 \,\middle\vert\, #3  \right)}
\newcommand{\conepb}[3][]{\E_{#1}\bigl( #2 \bigm\vert #3  \bigr)}
\newcommand{\expb}[1]{ {\exp} ( #1 ) }
\def\dd{\mathop{}\!\mathrm{d}}
\newenvironment{equ}
{\begin{equation} \begin{aligned}}
{\end{aligned} \end{equation}}
\def\cite{\citet*}
\begin{document}


\begin{frontmatter}
\title{Cram\'er-type Moderate Deviation Theorems for Nonnormal Approximation} 
\runtitle{Moderate deviation for nonnormal approximation}









\begin{aug}
\author{\fnms{Qi-Man}
\snm{Shao$^{1,2}$}\protect\ead[label=e1]{qmshao@cuhk.edu.hk}\thanksref{t1}},
\thankstext{t1}{Research partially supported by Hong Kong RGC GRF 14302515 and 14304917}
\author{\fnms{Mengchen}
\snm{Zhang$^{3}$}\protect\ead[label=e2]{mzhangag@connect.ust.hk}}
and

\author{\fnms{Zhuo-Song}
\snm{Zhang$^{2,4}$}\protect\ead[label=e3]{zszhang.stat@gmail.com}\thanksref{t2}}
\thankstext{t2}{Corresponding author. Research partially supported by Singapore Ministry of Education Academic Research Fund MOE 2018-T2-076}


\runauthor{Q.M. Shao, M.C. Zhang and Z.S. Zhang}

\affiliation{Southern University of Science and Technology $^{1}$,  The Chinese University of Hong Kong $^{2}$,
Hong Kong UST $^{3}$ and National University of Singapore $^4$}

\address{Department of Statistics and Data Scinece\\
Southern University of Science and Technology\\
Shenzhen, Guangdong, P.R. China\\
and \\
Department of Statistics\\
The Chinese University of Hong Kong \\
Shatin, N.T.,
Hong Kong\\
\printead{e1}\\
\phantom{E-mail:\ }
}

\address{Department of Mathematics\\
Hong Kong University of Science and Technology\\
Clear Water Bay\\
Kowloon, Hong Kong\\
\printead{e2}\\
\phantom{E-mail:\ }}

\address{Department of Statistics\\
The Chinese University of Hong Kong\\
Shatin, N.T., Hong Kong\\
and \\
Department of Statistics and Applied Probability\\
National University of Singapore\\
Singapore 117546 \\
\printead{e3}\\
\phantom{E-mail:\ }
}

\end{aug}

\begin{abstract}
    A Cram\'er-type moderate deviation theorem quantifies the relative error of the tail probability approximation. It provides a criterion whether the limiting tail probability can be used to estimate the tail probability under study. Chen, Fang and Shao (2013) obtained a general Cram\'er-type moderate result using Stein's method when the limiting was a normal distribution. In this paper,
Cram\'er-type moderate deviation theorems are established for nonnormal approximation under a general Stein identity, which is satisfied via the exchangeable pair approach and Stein's coupling. In particular, a Cram\'er-type moderate deviation theorem is obtained for the general Curie--Weiss model and the imitative monomer-dimer mean-field model.
\end{abstract}

\begin{keyword}[class=MSC]
\kwd[Primary ]{60F10}
\kwd[; secondary ]{60F05}
\end{keyword}

\begin{keyword}
 \kwd{Moderate deviation}
 \kwd{Nonnormal approximation}
 \kwd{Stein's method}
 \kwd{ Curie--Weiss model}
 \kwd{Imitative monomer-dimer mean-field model}
\end{keyword}

\end{frontmatter}

\section{Introduction}

Consider a sequence of random variables $W_n$. One often needs
 to calculate the tail probability of $W_n$ such as
$\P(W_n \geq x_n)$. Since the exact distribution of $W_n$ is hardly known, it is common to use the
limiting distribution, that is, assuming that $W_n$ converges to $Y$ in distribution, $\P(Y\geq x_n)$ is used to estimate \(\P (W_n \geq x_n)\). The Cram\'er-type moderate deviation seeks the largest possible \(a_n\) so that
\begin{equation}
    {\frac{\P(W_n \geq x)}{\P(Y \geq x)} }
= 1 + \mbox{error} \to 1 \label{01}
\end{equation}
holds for $ 0 \leq x \leq a_n$. 
This quantifies the relative error of the distribution approximation and provides a
criterion whether the limiting tail probability can be used to estimate the tail probability.
When $Y$ is the normal random variable and $W_n$ is the standardized sum of the independent random variables, the Cram\'er-type moderate deviation is well understood. In particular,
for independent and identically distributed random variables $X_1, \ldots, X_n$ with $\E X_i =0, \E X_i^2 =1$ and
$\E e^{t_0 \sqrt{|X_1|}} < \infty$, \(t_0 >0\), it holds that
\begin{equation}
\frac{\P(W_n \geq x) }{ 1- \Phi(x)} = 1 + O(1) ( 1+x^3) /\sqrt{n}
\lbl{02}
\end{equation}
for $ 0 \leq x \leq n^{1/6}$, where $W_n = (X_1 + \cdots + X_n)/\sqrt{n}$.  The finite-moment-generating function of $|X_1|^{1/2}$ is necessary, and both the range $0 \leq x \leq n^{1/6}$ and the order of the error term $(1+x^3)/\sqrt{n}$ are optimal. We refer to \cite{Li61P} and \cite[p. 251]{Pe75S} for details.

Considering general dependent random variables whose dependence is defined in terms of a Stein identity, \cite{Ch13S} obtained a general Cram\'er-type moderate deviation result for normal approximation using Stein's method.
Stein's method, introduced by \cite{St72B}, is a completely different approach to distribution approximation than the classical Fourier transform. It works not only for independent random variables but also for dependent random variables. It can also provide accuracy  of   the distribution approximation. Extensive applications of Stein's method to obtain Berry--Esseen-type bounds can be found in, for example, \cite{diaconis1977finite}, \cite{St86A}, \cite{Ba90S},  \cite{Go97S}, \cite{Ch04N,Ch07N}, \cite{Ch08N}, \cite{nourdin2009stein} and \cite{Sh18B}. We refer to \cite{Ch11Na}, \cite{No12N}
and \cite{Ch14S} for comprehensive coverage of the method's fundamentals and applications. In addition to the normal approximation, \cite{Ch11Nb} obtained a general nonnormal approximation via the exchangeable pair approach and the corresponding Berry--Esseen-type bounds. We also refer to \cite{Sh16I} for a more general result.

The main purpose of this paper is to obtain a Cram\'er-type moderate deviation theorem for nonnormal approximation. Our main tool is based on Stein's method, combined with some techniques in \cite{Ch11Nb} and \cite{Ch13S}. The paper is organized as follows. Section 2 presents a Cram\'er-type moderate deviation theorem under a general Stein identity setting, which recovers the result of \cite{Ch13S} as a special case. In Section 3, the result is applied to two examples: the general Curie--Weiss model and imitative monomer-dimer models.
The proofs of the main results in Section 2 are given in Sections 4 and the proofs of theorems in Section 3 are postponed to Section 5. 

\section{Main Results}\label{main}

\setcounter{equation}{0}

Let $W:=W_n$ be the random variable of interest. Following the setting in \cite{Ch11Nb} and \cite{Ch13S}, we assume that there exists a constant $\delta$, a nonnegative random function $\hatK(t)$, a function $g$ and a random variable $R(W)$ such that
\begin{equation}
	\E\bigl(f(W)g(W)\bigr)= \E\left(\int_{|t|\leq \delta} f'(W+t) \hat{K}(t) dt\right) + \E\bigl(f(W)R(W)\bigr)
  \label{c1}
\end{equation}
for all absolutely continuous functions $f$ for which the expectation of either side exists. Let
\begin{equation}
\hat{K}_1 =\inttd \hat{K}(t)dt \label{k1}
\end{equation}
and
\begin{equation}
G(y) = \int_0^y g(t) dt. \label{Gy}
\end{equation}
Let $Y$ be a random variable with the probability density function
\begin{equation}\label{y:den}
    p(y) = c_1\, e^{-G(y)}, \quad y\in \R,
\end{equation}
where $c_1$ is a normalizing constant.

In this section, we present a Cram\'er-type moderate deviation theorem for general  distribution approximation under Stein's identity in general and under an exchangeable pair and Stein's couplings in particular.

Before presenting the main theorem, we first give some of the conditions of $g$.

Assume that
\begin{condition}
  \item \label{icon1}The function $g$ is nondecreasing and $g(0)=0$.
  \item \label{icon2} For $y\neq 0,~yg(y)>0$.
  \item \label{icon3} There exists a positive constant $c_2$ such that for $x,y\in \R$,
    \begin{equation}
      |g(x+y)|\leq c_2\left( |g(x)| + |g(y)| + 1 \right).
      \label{c2}
    \end{equation}
  \item \label{icon4} There exists $c_3\geq 1$ such that for $y\in \R$,
    \begin{equation}
      |g'(y)|\leq c_3\left( \frac{1+|g(y)|}{ 1+|y|} \right).
      \label{c3}
    \end{equation}
\end{condition}

 A large class of functions satisfy \cref{icon1,icon2,icon3,icon4}. A typical example is
    $g(y)=\sgn(y)|y|^p, \ p\geq 1$.

We are now ready to present our main theorem.
\begin{theorem}\label{thm:2.1}
Let $W$ be a random variable of interest satisfying \eqref{c1}. Assume that \cref{icon1,icon2,icon3,icon4} are satisfied.
Additionally, assume that there exist $\tau_1>0,\tau_2 > 0, \delta_1>0$ and $ \delta_2\geq 0$ such that
\begin{align}
    |\E(\hat{K}_1|W)-1|&\leq \delta_1 \bigl( |g(W)|^{\tau_1}+ 1 \bigr) ,\label{de1}\\
    |R(W)|&\leq \delta_2 \bigl(|g(W)|^{\tau_2} + 1\bigr). \label{de2}
\end{align}
In addition, there exist constants $d_0\geq 1, d_1>0$ and $0 \leq  \alpha < 1$ such that
\begin{align}
\E(\hat{K_1}|W) & \leq d_0,\label{d0}\\
\delta |g(W)|  & \leq d_1 , \label{d1}\\
|R(W)|         & \leq \alpha (|g(W)| + 1). \label{d2}
\end{align}
Then, we have
\begin{equ}\label{MD}
    \frac{\P(W>z)}{ \P(Y>z)}={}& 1+O(1)\Big(\delta \bigl(1 + zg^2(z)\bigr) \\
                               & \hspace{1.7cm} +\delta_1\bigl( 1 + zg^{\tau_1+1}(z)\bigr) +  \delta_2\bigl( 1 + zg^{\tau_2}(z)\bigr) \Big)
\end{equ}
for $z\geq 0$ satisfying $\delta zg^2(z)+\delta_1zg^{\tau_1+1}(z)+\delta_2zg^{\tau_2}(z)\leq 1$, where
$O(1)$ is bounded by a finite constant depending only on $d_0,d_1,c_1,c_2, c_3, \tau_1, \tau_2, \alpha$ and $\max (g(1), |g(-1)| )$.
\end{theorem}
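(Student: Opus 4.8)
The plan is to prove directly that $|\bar F_W(z)-\bar F_Y(z)|\le C\,\bar F_Y(z)\,\epsilon(z)$, where $\bar F_W(z)=\P(W>z)$, $\bar F_Y(z)=\P(Y>z)=c_1\int_z^{\infty}e^{-G(y)}\,dy$, $\epsilon(z)=\delta(1+zg^2(z))+\delta_1(1+zg^{\tau_1+1}(z))+\delta_2(1+zg^{\tau_2}(z))$, and $C$ depends only on the constants listed in the theorem; dividing by $\bar F_Y(z)$ then gives \cref{MD}, since $\epsilon(z)$ is bounded in the stated range. I would first record the properties of the target used throughout: from \cref{icon1,icon4} and integration by parts, $\bar F_Y(w)\asymp c_1 e^{-G(w)}/(1+|g(w)|)$ for $w\ge0$ and $e^{G(w)}\int_w^{\infty}e^{-G}\le1/g(w)$ (with a mirror bound on $(-\infty,0)$), the exact identity $\int_0^{w}g(s)e^{G(s)}\,ds=e^{G(w)}-1$, and --- the substitute for the moment generating function, available precisely because $p(y)=c_1e^{-G(y)}$ --- the tilt identity $\E\bigl[e^{G(Y)-G(Y-t)}h(Y)\bigr]=\E h(Y+t)$ for every $t$. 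The test function will be $f_z$, the solution of the Stein equation $f_z'(w)-g(w)f_z(w)=\mathbf 1(w>z)-\bar F_Y(z)$ for $Y$, explicitly $f_z(w)=e^{G(w)}\int_{-\infty}^{w}(\mathbf 1(y>z)-\bar F_Y(z))e^{-G(y)}\,dy$; using the bounds above one checks $|f_z(w)|\le g(w)^{-1}$ on $\{w>z\}$, $|f_z(w)|\le c_1^{-1}\bar F_Y(z)e^{G(w)}$ on $\{0\le w\le z\}$ (so $|f_z|\asymp g(z)^{-1}$ near $z$), $|f_z(w)|\le c_1^{-1}\bar F_Y(z)|g(w)|^{-1}$ on $\{w<0\}$, and that $f_z,\ gf_z,\ f_z'=gf_z+\mathbf 1_{(z,\infty)}-\bar F_Y(z)$ are bounded, so that \cref{c1} applies to $f=f_z$.

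\emph{Master equation.} Substituting $f=f_z$ in \cref{c1}, using $f_z'=gf_z+\mathbf 1_{(z,\infty)}-\bar F_Y(z)$, and Taylor-expanding $g(W+t)f_z(W+t)$ and $\mathbf 1(W+t>z)$ about $t=0$, I would obtain after rearrangement
\begin{equation*}
\begin{aligned}
\bar F_W(z)-\bar F_Y(z)={}&\E\bigl[(\mathbf 1(W>z)-\bar F_Y(z))(1-\E(\hatK_1\mid W))\bigr]-\E\bigl[f_z(W)g(W)(\hatK_1-1)\bigr]\\
&{}-\E\bigl[f_z(W)R(W)\bigr]-E_{\mathrm{sm}}-E_{\mathrm{jmp}},
\end{aligned}
\end{equation*}
where $E_{\mathrm{sm}}=\E\int_{|t|\le\delta}\{g(W+t)f_z(W+t)-g(W)f_z(W)\}\hatK(t)\,dt$ and $E_{\mathrm{jmp}}=\E\int_{|t|\le\delta}\{\mathbf 1(W+t>z)-\mathbf 1(W>z)\}\hatK(t)\,dt$. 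By \cref{de1,de2} the first three terms are at most $\delta_1\E\bigl[|f_z(W)|(|g(W)|^{\tau_1+1}+|g(W)|)+|\mathbf 1(W>z)-\bar F_Y(z)|(|g(W)|^{\tau_1}+1)\bigr]$ and $\delta_2\E\bigl[|f_z(W)|(|g(W)|^{\tau_2}+1)\bigr]$; moreover $|E_{\mathrm{jmp}}|\le d_0\,\P(|W-z|\le\delta)$ and $|E_{\mathrm{sm}}|\le\delta\,\E\bigl[\hatK_1\sup_{|s|\le\delta}|(gf_z)'(W+s)|\bigr]$ with $(gf_z)'=g'f_z+g^2f_z+g(\mathbf 1_{(z,\infty)}-\bar F_Y(z))$, where \cref{icon3,icon4} and \cref{d1} (which keeps $\delta|g(W)|\le d_1$) let one pass from $W+s$ back to $W$.

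\emph{Estimating the error terms.} Each expectation above is of the form $\E[\mathbf 1(W\in I)|f_z(W)|P(g(W))]$ for an interval $I$ and a polynomial $P$, and I would evaluate it by integration by parts in $w$, replacing the law of $W$ by $-d\bar F_W$ and inserting the a priori bound $\bar F_W(w)\le C_0\bar F_Y(w)$ over the whole range. On $\{w>z\}$ the bound $|f_z(w)|\le g(w)^{-1}$ reduces the integral to $\int_z^{\infty}P(g)\,(-d\bar F_W)$, which after one integration by parts and the a priori bound is at most $C\bar F_Y(z)$ times a polynomial in $g(z)$. On $\{0\le w\le z\}$, $|f_z(w)|\asymp c_1^{-1}\bar F_Y(z)e^{G(w)}$, and integration by parts --- now exploiting the cancellation of $\int_0^{z}g e^{G}=e^{G(z)}-1$ against $\int_0^{z}g e^{G}\bar F_W$ --- turns $\int_0^{z}e^{G(w)}P(g(w))\,d\bar F_W(w)$ into $\bar F_Y(z)\cdot z$ times a polynomial in $g(z)$; this is exactly where the global $e^{G}$ weight carried by $f_z$ is tamed, and where the tilt identity of the first paragraph is the reason the $\delta_i$-free corrections for $W$ and for $Y$ match. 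On $\{w<0\}$ the bound $|f_z(w)|\le c_1^{-1}\bar F_Y(z)|g(w)|^{-1}$ cancels the polynomial and leaves a term $O(\bar F_Y(z))$, since $\E|g(Y)|^{\tau_1}$ and $\E|g(Y)|^{\tau_2}$ are finite. For $E_{\mathrm{jmp}}$ one needs the anti-concentration estimate $\P(|W-z|\le\delta)\le C\delta(1+g(z))\bar F_Y(z)$, obtained by applying \cref{c1} to a smoothed indicator of $[z-\delta,z+\delta]$ and again the a priori bound. Assembling the pieces yields $|\bar F_W(z)-\bar F_Y(z)|\le C\bar F_Y(z)\epsilon(z)$.

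\emph{The a priori bound; the main obstacle.} The master equation is not yet self-contained: its right side involves $\bar F_W$ near $z$ and, through the $e^{G}$-weighted integrals, on all of $[0,z]$. I would close the loop by a bootstrap in $z$. A crude tail estimate $\bar F_W(w)\le C_1 e^{-\lambda G(w)}$ on the range follows from \cref{c1} applied to a truncated exponential test function and \cref{d0,d1,d2} --- it is here that $\alpha<1$ in \cref{d2} is needed, to make the resulting inequality contract. Together with the base estimates $\bar F_W(w)\le C_0\bar F_Y(w)$ and $|\bar F_W(0)-\bar F_Y(0)|=O(\delta+\delta_1+\delta_2)$ for $w=O(1)$ (the Berry--Esseen bound of \cite{Ch11Nb}, which uses \cref{de1,de2} and the crude tail estimate), a first-exit argument on the set where $\bar F_W(w)\le C_0\bar F_Y(w)$ holds, combined with the master equation, upgrades the a priori bound to $\bar F_W(w)\le C_0\bar F_Y(w)$ with a \emph{constant} $C_0$ throughout the range; a last pass then gives the sharp $1+O(\epsilon(z))$. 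The genuine difficulty is precisely this sharp, power-of-$g$-free tail comparison uniform over the moderate range: in the normal case it is produced by the moment generating function, whereas here it must come from the tilt identity $\E e^{G(Y)-G(Y-t)}=1$ together with \cref{d0,d1,d2}. The subsidiary points are the anti-concentration estimate for $W$ near $z$ and the bookkeeping that keeps any factor $\delta^{-1}$ from appearing (the reason \cref{d1} is imposed), while the Laplace asymptotics for $\bar F_Y$ and the integration-by-parts estimates are routine given \cref{icon1,icon2,icon3,icon4}.
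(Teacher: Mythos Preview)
Your master decomposition, the bounds on $f_z$, and the identification of the crucial obstacle (a uniform comparison $\bar F_W\le C_0\bar F_Y$ on the whole moderate range) all agree with the paper.  The gap is in how you close that obstacle.  A first-exit argument cannot work here: if you assume $\bar F_W(w)\le C_0\bar F_Y(w)$ on $[0,z)$, integrate by parts as you describe, and feed the result into the master equation, the error terms on $\{0\le W\le z\}$ and $\{W>z\}$ are linear in $C_0$, so you only obtain $\bar F_W(z)\le (1+CC_0\,\epsilon(z))\bar F_Y(z)$.  Since $\epsilon(z)$ is merely bounded (not small) on the admissible range, there is no choice of $C_0$ that makes $1+CC_0\,\epsilon(z)\le C_0$ hold uniformly; the bootstrap does not close.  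The crude sub-Gaussian-type bound $\bar F_W(w)\le C_1e^{-\lambda G(w)}$ with $\lambda<1$ is not strong enough to start from either, because the $e^{G(w)}$ weight in $f_z$ on $\{0\le w\le z\}$ then produces a divergent integral.

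The paper replaces your tail bootstrap by a direct estimate of the tilted moment itself.  Set
\[
h(s)=\E\bigl[e^{G(W)-G(W-s)}\mathbf 1(W>s)+e^{G(W)}\mathbf 1(0\le W\le s)\bigr]-1,
\]
differentiate in $s$ (so $h'(s)=\E\bigl[g(W-s)e^{G(W)-G(W-s)}\mathbf 1(W>s)\bigr]$), and apply the Stein identity \cref{c1} with test function $w\mapsto e^{G(w)-G(w-s)}-1$.  This produces a differential inequality
\[
h'(s)\le C\bigl(\delta(1+g^2(s))+\delta_1(1+g^{\tau_1+1}(s))+\delta_2(1+g^{\tau_2}(s))\bigr)\,h(s)+C,
\]
whose coefficient in front of $h(s)$ carries the small factors $\delta,\delta_1,\delta_2$; Gronwall then gives $h(s)+1\le C(1+s)e^{C\epsilon(s)}$, bounded on the admissible range.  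This is exactly the approximate version of your tilt identity $\E e^{G(Y)-G(Y-t)}=1$, but proved for $W$ without ever appealing to $\bar F_W$.  With $h(s)$ in hand, all the weighted expectations $\E\bigl[e^{G(W)}|g(W)|^{\tau}\mathbf 1(0\le W\le z)\bigr]$ and $\E\bigl[|g(W)|^{\tau}\mathbf 1(W>z)\bigr]$ are bounded by $C(1+zg^{\tau}(z))$ (the paper's Lemmas~4.4--4.5 and 4.7), and the master equation delivers the conclusion directly, with no circularity.

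Two smaller points.  First, the paper avoids your anti-concentration estimate for $E_{\mathrm{jmp}}$ altogether: instead of bounding $\P(|W-z|\le\delta)$, it uses the monotone sandwich $\mathbf 1(W>z+\delta)\le\mathbf 1(W+t>z)\le\mathbf 1(W>z-\delta)$ to compare $\bar F_W(z\pm\delta)$ with $\bar F_Y(z)$, and then shifts back using $|\bar F_Y(z)-\bar F_Y(z\pm\delta)|\le C\delta(1+g(z))\bar F_Y(z)$.  Second, the paper never proves or uses a preliminary bound of the form $\bar F_W(w)\le C_1e^{-\lambda G(w)}$; the role you assign to $\alpha<1$ is instead played inside the proof of the moment bound $\E|g(W)|^{\tau}\le C$ and of $h(s)$, where \cref{d2} makes the recursion for $\E|g(W)|^{\tau}f(W)$ contract.
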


The condition \eqref{c1} is called a general Stein identity, see \citet*[][Chapter 2]{Ch11Na}.
We use the exchangeable pair approach and Stein's coupling to construct $\hatK(t)$ and $R(W)$ as follows.

  Let $(W,W')$ be an exchangeable pair, that is, $(W, W')$ has the same joint distribution as $(W', W)$. Let $\Delta = W - W'$. 
  Assume that
  \begin{equation}\label{expair}
  \E(\Delta|W)=\lambda(g(W)-R(W)),
  \end{equation}
  where $0 < \lambda < 1$ . Assume that $|\Delta|\leq \delta$ for some constant $\delta>0$.
  It is known (see, e.g., \citet{Ch11Nb}) that \cref{c1} is satisfied with
   $$
  \hatK(t)=\frac{1 }{ 2\lambda } \Delta(\I(-\Delta\leq t\leq 0)-\I(0<t\leq \Delta)).
  $$
  Clearly, we have
  $$
  \hatK_1 = \frac{1 }{ 2 \lambda } \Delta^2.
  $$

  For exchangeable pairs, we have the following corollary.
\begin{corollary}\label{cor:2.1}
For \((W, W')\) an exchangeable pair satisfying \eqref{expair},  assume that $g(W)$, $\hatK_1$ and $R(W)$ satisfy the \cref{icon1,icon2,icon3,icon4} and \eqref{de1}--\eqref{d2} stated in
 \cref{thm:2.1}; then, \eqref{MD} holds.
\end{corollary}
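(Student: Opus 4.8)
\medskip
\noindent\textbf{Proof strategy.}
\Cref{cor:2.1} is a specialization of \cref{thm:2.1} to the exchangeable-pair construction recalled above, so the plan is short: first confirm that this construction really does produce a valid instance of the general Stein identity \cref{c1} with a nonnegative kernel supported on $[-\delta,\delta]$, and then invoke \cref{thm:2.1} directly, since all of its quantitative hypotheses \cref{icon1,icon2,icon3,icon4} and \cref{de1,de2,d0,d1,d2} are assumed in the statement of the corollary. The only thing that genuinely needs to be argued is therefore the identity \cref{c1} itself.

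For that step I would run the classical exchangeable-pair computation. Exchangeability of $(W,W')$ makes $\E\,F(W,W')$ vanish for every antisymmetric $F$ with $F(W,W')$ integrable; choosing $F(w,w')=\bigl(f(w)+f(w')\bigr)(w-w')$ gives $\E\bigl[f(W)\Delta\bigr]=\tfrac12\,\E\bigl[(f(W)-f(W'))\Delta\bigr]$, while conditioning on $W$ and using \cref{expair} gives $\E\bigl[f(W)\Delta\bigr]=\lambda\,\E\bigl[f(W)(g(W)-R(W))\bigr]$. Writing $f(W)-f(W')=\int_{W-\Delta}^{W}f'(s)\,ds$ and substituting $s=W+t$, one rewrites $\tfrac{1}{2\lambda}\,\Delta\bigl(f(W)-f(W')\bigr)$ as $\int_{|t|\leq\delta}f'(W+t)\,\hatK(t)\,dt$ with $\hatK$ the kernel displayed above; the bound $|\Delta|\leq\delta$ is exactly what confines the support of $\hatK$ to $[-\delta,\delta]$. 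Rearranging the two identities then yields \cref{c1}. A short case check on the sign of $\Delta$ confirms $\hatK(t)\geq 0$, and integrating in $t$ gives $\hatK_1=\Delta^2/(2\lambda)$, as claimed.

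With \cref{c1} in hand and the remaining conditions holding by hypothesis, \cref{thm:2.1} applies and delivers \cref{MD}, which finishes the argument. I do not expect any real obstacle here: the corollary is a reformulation rather than a new result, and the only point needing a little care is the bookkeeping that converts the signed integral $\int_{W-\Delta}^{W}$ into the indicator form of $\hatK$ and verifies its nonnegativity for both signs of $\Delta$, which is entirely routine. Alternatively, one may bypass this derivation and simply cite the identity \cref{c1} for this particular $\hatK$ as the standard exchangeable-pair Stein identity---already recorded above with a reference---before quoting \cref{thm:2.1}.
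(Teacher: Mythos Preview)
Your proposal is correct and matches the paper's approach: the paper simply notes (citing \cite{Ch11Nb}) that the exchangeable-pair construction with $|\Delta|\leq\delta$ yields the identity \cref{c1} with the displayed nonnegative kernel $\hatK$ and $\hatK_1=\Delta^2/(2\lambda)$, after which \cref{cor:2.1} is an immediate specialization of \cref{thm:2.1}. Your explicit derivation of \cref{c1} via the antisymmetric-function trick is just the standard argument underlying that citation, so there is no substantive difference.
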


Stein's coupling introduced by \citet*{Ch10S} is another way to construct the general Stein identity.

A triple $(W,W',T)$ is called a $g$-Stein's coupling if there is a function $g$ such that
  \begin{equation}\label{stpair}
  \E(Tf(W')-Tf(W))=\E(f(W)g(W))
  \end{equation}
  for all absolutely continuous function $f$, such that the expectations on both sides exist. Assume that $|W'-W|\leq \delta$. Then, by \citet{Ch10S}, we have
  \begin{align}
  \E(f(W)g(W))=\E\Bigl(\inttd f'(W+t)\hatK(t) dt\Bigr),
  \label{eq:coupling-identity}
  \end{align}
  where
  \begin{align}
	  \label{eq:steincouplingK}
  \hatK(t)=T(\I(0\leq t\leq W'-W)-\I(W'-W\leq t<0)).
  \end{align}
  It is easy to see that
  $\hatK_1 = T(W' - W). $

The following corollary presents a moderate deviation result for Stein's coupling.

\begin{corollary}\label{cor:2.2}
	Let \((W, W', T)\) be a \(g\)-Stein's coupling satisfying \cref{stpair,eq:coupling-identity} and let \(\hatK \) be defined as in \eqref{eq:steincouplingK} and assume that $\hatK (t) \geq 0$ for \(|t| \leq \delta\). Let $g(W)$ and $\hatK_1:=T (W' - W)$ satisfy the \cref{icon1,icon2,icon3,icon4} and \cref{de1,d0,d1} stated in \cref{thm:2.1}, then \eqref{MD} holds with $\delta_2 =0$.
\end{corollary}

\begin{remark}
    For  $s \geq 0$,  let
    \begin{equ}
        \zeta (w, s) =
        \begin{cases}
            \label{r2.1-a}
            e^{G(w) - G(w- s)}, & w > s, \\
            e^{G(w)} ,          & 0 \leq w \leq s, \\
            1,                  & w < 0.
        \end{cases}
    \end{equ}
    Condition \eqref{de1} can be replaced by
    \begin{equ}
        \label{r2.1-b}
        \bigl\lvert \E(\hat{K}_1 | W) - 1 \bigr\rvert \leq K_2 + \delta_1 \bigl( |g(W)|^{\tau_1} + 1 \bigr),
    \end{equ}
    where $K_2 \geq 0$ is a random variable satisfying
    \begin{equ}
        \label{r2.1-c}
        \E K_2 \zeta(W, s) \leq \delta_1 (1 + g^{\tau_1}(s)  ) \E \zeta (W, s).
    \end{equ}
    \label{r2.1}
\end{remark}

\begin{remark}
\label{r2.2}
Condition \cref{d2} may not be satisfied when $|W|$ is large in some applications. Following the proof of \cref{thm:2.1}, when \cref{d2} is  replaced by the following condition, there exist $0 \leq \alpha < 1$, $d_2\geq 0,d_3 > 0$ and $ \kappa>0$ such that
\begin{equation}
|R(W)| \leq \alpha \,  ( |g(W)|+1) + d_2 \, I(|W|> \kappa),  \label{r2.2-a}
\end{equation}
and
\begin{equation}
    d_2  \P(|W|> \kappa ) \leq d_3 e^{ - 2 s_0 d_1^{-1} \delta^{-1} } ,
\label{r2.2-b}
\end{equation}
where $d_1$ is bounded in \cref{d1} and  $s_0 = \max \left\{ s: \delta s g^2(s) \leq 1 \right\}$,
\cref{thm:2.1,cor:2.1,cor:2.2} remain valid with
$O(1)$ bounded by a finite constant depending only on $d_0,d_1,d_2, d_3,c_1,c_2, c_3, \tau_1, \tau_2, \alpha$ and $\max (g(1), |g(-1)| )$.
\end{remark}

\section{Applications}\label{appli}

\setcounter{equation}{0}

In this section, we apply the main results to the general Curie--Weiss model at the critical temperature and the imitative monomer-dimer model.

\subsection{General Curie-Weiss model at the critical temperature}

Let $\xi$ be a random variable with probability measure $\rho$ which is   symmetric  on $\R$. Assume that
\begin{align}
    \label{eq:gcw01}
    \E \xi^2 = 1, \quad \E \exp(\beta \xi^2 / 2 ) < \infty \quad \text{for} \quad \beta \geq 0.
\end{align}
The general Curie-Weiss model $\text{CW}(\rho)$ at inverse temperature $\beta$ is defined as the array of spin random variables
$\mathbf{X} = (X_1, X_2, \ldots, X_n)$ with joint distribution
\begin{align}
    \dd \P_n (\mathbf{x}) = Z_n^{-1} \exp \biggl( \frac{\beta}{2n} (x_1 + x_2 + \cdots + x_n)^2  \biggr) \prod_{i=1}^n \dd \rho (x_i)
    \label{eq:gcw02}
\end{align}
for $\mathbf{x} = (x_1, x_2, \ldots, x_n) \in \R^n$ where
\begin{align*}
    Z_n = \int \exp\biggl( \frac{\beta}{2n} (x_1 + x_2 + \cdots + x_n)^2 \biggr) \prod_{i=1}^n \dd \rho(x_i)
\end{align*}
is the normalizing constant.

The magnetization $m(\mathbf{x})$ is defined by
\begin{align*}
    m(\mathbf{x}) = \frac{1}{n} \sum_{i = 1}^n x_i.
\end{align*}
Following the setting of \cite{Ch10A}, we assume that the measure $\rho$ satisfies the following conditions:
\begin{condition3}
\item \label{con:gcw01} $\rho$ has compact support, that is, $\rho( [-L, L] ) = 1$ for some $L < \infty$.
\item \label{con:gcw02} 
    Let
    \begin{align}
        h(s):= \frac{s^2}{2} - \log \int \exp (sx) \dd \rho(x).
        \label{eq:gcw03}
    \end{align}
    The equation $h'(s) = 0$ has a unique root at $s = 0$.
\item \label{con:gcw03} Let $k\geq 2$ be such that $h^{(i)} (0) = 0$ for $0 \leq i \leq 2k - 1$ and $h^{(2k)} (0) > 0.$
\end{condition3}
Specially, for the simple Curie--Weiss model, where $\rho = \frac{1}{2} \boldsymbol\delta_1 + \frac{1}{2} \boldsymbol\delta_{-1}$ and $\boldsymbol\delta$ is the Dirac measure,   \crefrange{con:gcw01}{con:gcw03} are satisfied with $L = 1$ and $k = 2$. For $0 < \beta < 1$, $n^{1/2} m(\mathbf{X})$ converges weakly to a Gaussian distribution, see \cite{El78T}.  Also, \cite{Ch13S} obtained the Cram\'er-type moderate deviation for this normal approximation.  When $\beta = 1$, \cite{Si734} proved that the law of $n^{1/4} m(\mathbf{X})$ converges to $\mathcal{W}(4, 12)$ as $n \to \infty$, with the probability density function
\begin{align}
    f_Y(y) = \frac{\sqrt{2}}{ 3^{1/4} \Gamma(1/4)} e^{-\frac{y^4}{12}}.
    \label{YY}
\end{align}
\cite{Ch11Nb} showed that the Berry--Esseen bound is of order $O(n^{-1/2})$.

For the rest of this subsection, we consider only the case where $\beta = 1$.
Assume that \crefrange{con:gcw01}{con:gcw03} are satisfied. Let $W = n^{\frac{1}{2k}} m(\mathbf{X})$.
\cite{El78T} showed that
$W$ converges weakly to a distribution with density
\begin{equ}
    \label{eq:gcw04}
    p(y) = c_1 \exp\bigl( - h^{(2k)}(0) y^{2k}  / (2k)!  \bigr),
\end{equ}
where $c_1$ is a normalizing constant.
For the concentration inequality,
\cite{Ch10A} used Stein's method to prove that for any $n \geq 1$ and $t \geq 0$,
\begin{align*}
    \P \bigl(  \bigl\lvert W \bigr\rvert \geq t \bigr) \leq 2 e^{-c_\rho t^{2k}},
\end{align*}
where $c_\rho > 0$ is a constant depending only on $\rho$.
Moreover, \cite{Sh18B} proved the Berry--Esseen bound:
\begin{align}
    \sup_{z \in \R} \bigl\lvert \P(W \leq z) - \P(Y \leq z) \bigr\rvert \leq C n^{-\frac{1}{2k}},
    \label{eq:gcw05}
\end{align}
where $Y \sim p(y) $ as defined in \cref{eq:gcw04} and $C>0$ is a constant.

In this subsection, we provide the Cram\'er-type moderate deviation for $W$.

\begin{theorem}
    Let $W$ be defined as above. If $\beta = 1$, we have
    \begin{align*}
        \frac{ \P(W > z)  }{\P(Y > z)} = 1 + O(1) n^{-1/k} ( 1 + z^{2k + 2} ),
    \end{align*}
    uniformly in $z \in \bigl(0, n^{\frac{1}{k(2k + 2)}}\bigr)$.
    \label{thm:3.2}
\end{theorem}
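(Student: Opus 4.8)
The plan is to realise $W=n^{1/(2k)}m(\mathbf{X})$ within the framework of \cref{cor:2.1}, using the single‑spin resampling exchangeable pair, and then to read off \cref{MD}. First, matching \cref{eq:gcw04} with \cref{y:den} forces $G(y)=h^{(2k)}(0)\,y^{2k}/(2k)!$, hence $g(y)=G'(y)=\frac{h^{(2k)}(0)}{(2k-1)!}\,\sgn(y)\,|y|^{2k-1}$; since $h^{(2k)}(0)>0$ by \cref{con:gcw03} and $2k-1\geq 3$, this is a positive multiple of the model function $\sgn(y)|y|^{p}$, so \cref{icon1,icon2,icon3,icon4} hold. Given $\mathbf{X}$ drawn from \cref{eq:gcw02} with $\beta=1$, pick $I$ uniform on $\{1,\dots,n\}$ independent of $\mathbf{X}$, replace $X_I$ by an independent draw $X_I'$ from the conditional law of $X_I$ given $(X_j)_{j\neq I}$, and set $W'=n^{1/(2k)}m(\mathbf{X}')$. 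Then $(W,W')$ is exchangeable, $\Delta=W-W'=n^{1/(2k)-1}(X_I-X_I')$, and $|\Delta|\leq 2Ln^{1/(2k)-1}=:\delta$ by \cref{con:gcw01}; so by \citet{Ch11Nb} the identity \cref{c1} holds with $\hatK_1=\Delta^2/(2\lambda)$.

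Next I would compute the linear‑regression coefficient. The conditional law of $X_i$ given $(X_j)_{j\neq i}$ is $\rho$ exponentially tilted by $s_i=\frac1n\sum_{j\neq i}X_j=m(\mathbf{X})-X_i/n$, up to the factor $e^{x_i^2/(2n)}=1+O(1/n)$ (uniformly, using \cref{con:gcw01}). Writing $\psi(s)=\log\int e^{sx}\dd\rho(x)$ one has $\psi'=\mathrm{id}-h'$ by \cref{eq:gcw03}, and \cref{con:gcw02,con:gcw03} give $h'(s)=\frac{h^{(2k)}(0)}{(2k-1)!}s^{2k-1}+O(s^{2k+1})$ near $0$; exploiting also that $\rho$, hence $\psi$ and $h$, is even, a short computation yields
\begin{equ}
  \E(\Delta\mid\mathbf{X})=n^{1/(2k)-1}\Bigl(h'(m(\mathbf{X}))+O\bigl(m(\mathbf{X})/n\bigr)+O(n^{-2})\Bigr).
\end{equ}
Substituting $m(\mathbf{X})=n^{-1/(2k)}W$ and matching the leading term with $g(W)$ identifies $\lambda=n^{(1-2k)/k}$ and shows \cref{expair} holds with $|R(W)|\leq C\bigl((|W|^{2k+1}+|W|)\,n^{-1/k}+n^{-1-1/(2k)}\bigr)$, hence \cref{de2} with $\delta_2\asymp n^{-1/k}$ and $\tau_2=(2k+1)/(2k-1)$. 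At the same time $\hatK_1=\Delta^2/(2\lambda)=\tfrac12(X_I-X_I')^2\in[0,2L^2]$, which gives \cref{d0} with $d_0=2L^2$, and $\delta\,|g(W)|=\frac{2L\,h^{(2k)}(0)}{(2k-1)!}\,|m(\mathbf{X})|^{2k-1}\leq\frac{2L^{2k}\,h^{(2k)}(0)}{(2k-1)!}=:d_1$, which gives \cref{d1}.

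For the variance condition \cref{de1}, expand $\E\bigl((X_i-X_i')^2\mid(X_j)_{j\neq i}\bigr)=X_i^2-2X_i\E(X_i'\mid\cdot)+\E(X_i'^2\mid\cdot)$ in terms of the first two moments of the tilted $\rho$ and sum over $i$ to get
\begin{equ}
  \E(\hatK_1\mid\mathbf{X})=\tfrac12\Bigl(\tfrac1n\sum_{i=1}^n X_i^2+1-m(\mathbf{X})^2+O\bigl(m(\mathbf{X})^{2k-2}\bigr)\Bigr)+O(1/n).
\end{equ}
Since conditioning on $W$ fixes $m(\mathbf{X})$, the only non‑explicit piece is $\E\bigl(\tfrac1n\sum X_i^2\mid m(\mathbf{X})=m\bigr)=\E(X_1^2\mid m(\mathbf{X})=m)$; because the Gibbs conditional law given $m(\mathbf{X})=m$ is exactly $\rho^{\otimes n}$ conditioned on its sample mean, an exponential‑tilting/local‑CLT estimate under \cref{con:gcw01,con:gcw02,con:gcw03} yields $\E(X_1^2\mid m(\mathbf{X})=m)=1+m^2+O(m^{2k-2})+O(1/n)$, uniformly for $|m|\leq L$. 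The $m^2$‑terms then cancel, giving $|\E(\hatK_1\mid W)-1|\leq C\bigl(|W|^4 n^{-2/k}+|W|^{2k-2}n^{-(k-1)/k}+n^{-1}\bigr)$; since $|W|\leq Ln^{1/(2k)}$ this is of the form $\delta_1(|g(W)|^{\tau_1}+1)$ with $\delta_1\asymp n^{-1/k}$ for $k=2$, $\delta_1\asymp n^{-2/k}$ for $k\geq 3$, and a suitable $\tau_1<2$. Finally, \cref{d2} fails once $|W|\gtrsim n^{1/(2k)}$, so I would invoke \cref{r2.2}: take $\kappa=c\,n^{1/(2k)}$, verify $|R(W)|\leq\alpha(|g(W)|+1)$ on $\{|W|\leq\kappa\}$ for some $\alpha<1$, and use the concentration inequality of \citet{Ch10A}, $\P(|W|>\kappa)=\P\bigl(n^{1/(2k)}|m(\mathbf{X})|>\kappa\bigr)\leq 2e^{-c_\rho\kappa^{2k}}\leq 2e^{-cn}$, against the required $e^{-2s_0 d_1^{-1}\delta^{-1}}=e^{-c'n^{\gamma}}$ with $\gamma=2(2k-1)/(4k-1)<1$ (here $s_0\asymp n^{(2k-1)/(2k(4k-1))}$, $\delta^{-1}\asymp n^{(2k-1)/(2k)}$); since $n\gg n^{\gamma}$, \cref{r2.2-b} holds.

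With all hypotheses of \cref{cor:2.1} (as relaxed by \cref{r2.2}) verified, \cref{MD} gives
\begin{equ}
  \frac{\P(W\geq z)}{\P(Y\geq z)}=1+O(1)\Bigl(\delta\bigl(1+zg^2(z)\bigr)+\delta_1\bigl(1+zg^{\tau_1+1}(z)\bigr)+\delta_2\bigl(1+zg^{\tau_2}(z)\bigr)\Bigr),
\end{equ}
and plugging in $g(z)\asymp z^{2k-1}$, $\delta\asymp n^{1/(2k)-1}$, $\delta_1\asymp n^{-\min(2,k-1)/k}$, $\delta_2\asymp n^{-1/k}$, $\tau_2=(2k+1)/(2k-1)$ shows that the term $\delta_2 zg^{\tau_2}(z)\asymp n^{-1/k}z^{2k+2}$ dominates the other two by a strictly negative power of $n$ throughout the range $\delta_2 zg^{\tau_2}(z)\lesssim 1$, that is, $0<z<n^{1/(k(2k+2))}$; hence the right side equals $1+O(1)\,n^{-1/k}(1+z^{2k+2})$, as claimed. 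The main obstacle is the uniform conditional‑moment estimate in the third step: obtaining $\E(X_1^2\mid m(\mathbf{X})=m)=1+m^2+O(m^{2k-2})+O(1/n)$ with an error small and uniform over all $|m|\leq L$ requires a careful local limit theorem / Edgeworth‑type expansion for $\rho^{\otimes n}$ conditioned on its sample mean under \cref{con:gcw01,con:gcw02,con:gcw03}; the remaining work is the bookkeeping needed to collapse the three error contributions into the single clean bound $n^{-1/k}(1+z^{2k+2})$ over precisely the stated range and to arrange \cref{r2.2-a,r2.2-b}.
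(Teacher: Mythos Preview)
Your overall architecture matches the paper's: the same single-spin exchangeable pair, the same identification of $g$, $\lambda$, $\delta$, the same bound $|R(W)|\leq Cn^{-1/k}(|W|^{2k+1}+1)$ giving $\delta_2\asymp n^{-1/k}$ and $\tau_2=(2k+1)/(2k-1)$, and the same use of \cref{r2.2} together with Chatterjee--Dey concentration to dispose of the region $|W|\gtrsim n^{1/(2k)}$. The final arithmetic collapsing the three error terms into $n^{-1/k}(1+z^{2k+2})$ is also the same.

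The substantive divergence is in the variance condition. You aim to verify \cref{de1} literally, which forces you to compute $\E(\hat K_1\mid W)$ and hence $\E(X_1^2\mid m(\mathbf{X})=m)$ via a Gibbs-conditioning\,/\,local-CLT argument uniformly in $|m|\leq L$; you rightly flag this as the main obstacle. The paper sidesteps it. It conditions on the full $\sigma$-field $\F=\sigma(X_1,\dots,X_n)$ rather than on $W$, obtaining
\[
\Bigl|\tfrac{1}{2\lambda}\E\bigl((W-W')^2\bigm|\F\bigr)-1\Bigr|\ \leq\ \tfrac12\Bigl|\tfrac1n\sum_{i=1}^n\bigl(X_i^2-\E(X_i^2\mid\F^{(i)})\bigr)\Bigr|\ +\ Cn^{-1/k}\bigl(1+|W|^2\bigr).
\]
The first piece, call it $K_2$, is \emph{not} a function of $W$, so \cref{de1} does not hold as stated; instead the paper invokes \cref{r2.1}, which only demands $\E\,K_2\,\zeta(W,s)\leq \delta_1(1+g^{\tau_1}(s))\,\E\,\zeta(W,s)$. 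This is established in \cref{lem:5.2} by Cauchy--Schwarz, expanding the square, and controlling the cross terms $\E^{(i,j)}\{(X_i^2-\E^{(i,j)}X_i^2)(X_j^2-\E^{(i,j)}X_j^2)\,\zeta(W,s)\}$ via a second-order Taylor expansion of $\log\zeta(W,s)-\log\zeta(W^{(i,j)},s)$; only the compact support of $\rho$ and elementary bounds are used, and no local limit theorem enters. The upshot is $\delta_1=Cn^{-1/k}$ with $\tau_1=2/(2k-1)$.

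So the paper's route eliminates exactly the obstacle you identify. Your approach could plausibly be pushed through, but the uniformity of the conditional-moment expansion near the edge of $\mathrm{supp}\,\rho$ is genuinely delicate (the conditional law degenerates there and standard local-CLT errors are not uniform), whereas the \cref{r2.1}\,+\,weighted-Cauchy--Schwarz argument needs none of this.
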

\begin{corollary}
    For the simple Curie--Weiss model, in which case $\rho = \frac{1}{2} \boldsymbol\delta_1 + \frac{1}{2} \boldsymbol\delta_{-1}$ and $\boldsymbol\delta$ is the Dirac measure. Then,
    \begin{align*}
        \frac{ \P(W > z)  }{\P(Y > z)} = 1 + O(1) n^{-1/2} ( 1 + z^{6} ),
    \end{align*}
    uniformly in $z \in (0, n^{1/12})$, where $Y \sim \mathcal{W}(4,  12)$.
    \label{cor:3.1}
\end{corollary}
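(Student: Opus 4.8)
\noindent\textbf{Reducing \cref{cor:3.1} to \cref{thm:3.2}.} Since \cref{thm:3.2} is stated above, \cref{cor:3.1} is just its specialization to $k=2$; I also sketch the proof of \cref{thm:3.2} itself, since that is the real content. For $\rho=\tfrac12\boldsymbol\delta_1+\tfrac12\boldsymbol\delta_{-1}$ we have $\int e^{sx}\,\dd\rho(x)=\cosh s$, so $h(s)=s^2/2-\log\cosh s=s^4/12+O(s^6)$; hence $h'(0)=h''(0)=h'''(0)=0$, $h^{(4)}(0)=2>0$, and \crefrange{con:gcw01}{con:gcw03} hold with $L=1$, $k=2$. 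Then the limiting density \cref{eq:gcw04} is $p(y)=c_1 e^{-2y^4/4!}=c_1 e^{-y^4/12}$, i.e.\ $Y\sim\mathcal W(4,12)$ as in \cref{YY}, and \cref{thm:3.2} with $k=2$ gives $1+O(1)\bigl(n^{-1/2}(1+z^6)\bigr)$ for $z\in(0,n^{1/12})$, which is \cref{cor:3.1}. To prove \cref{thm:3.2}, the plan is to apply \cref{cor:2.1} — in the form permitted by \cref{r2.2} — with
\[
  g(y)=\frac{h^{(2k)}(0)}{(2k-1)!}\,y^{2k-1},\qquad G(y)=\int_0^y g=\frac{h^{(2k)}(0)}{(2k)!}\,y^{2k},
\]
so that $p(y)=c_1 e^{-G(y)}$ coincides with \cref{eq:gcw04}; since $h^{(2k)}(0)>0$, $g$ is a positive multiple of $\sgn(y)|y|^{2k-1}$ and \cref{icon1,icon2,icon3,icon4} hold. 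The exchangeable pair $(W,W')$ is built by one step of single-site Glauber dynamics under $\P_n$: draw $I$ uniform on $\{1,\dots,n\}$, resample $X_I$ from its $\P_n$-conditional law given $\mathbf X^{(I)}=(X_j)_{j\neq I}$ to get $X_I'$, and set $W=n^{1/(2k)}m(\mathbf X)$, $W'=W+n^{1/(2k)-1}(X_I'-X_I)$. Then $(W,W')$ is exchangeable, $|\Delta|=|W-W'|\le 2Ln^{1/(2k)-1}=:\delta$ by \cref{con:gcw01}, and \cref{c1} holds with $\hat K_1=\Delta^2/(2\lambda)$ for the $\lambda$ identified below.

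\noindent\textbf{The main computations.} From the mean-field form \cref{eq:gcw02}, the conditional law of $X_I'$ given $\mathbf X^{(I)}$ is proportional to $e^{sx}\,\dd\rho(x)$ with $s=(S-X_I)/n$, $S=\sum_j X_j$; writing $\psi(s)=s-h'(s)$ (the tilted mean) and $\psi_2(s)=\psi'(s)+\psi(s)^2$, we have $\E(X_I'\mid\mathbf X^{(I)})=\psi(s)$, $\E(X_I'^2\mid\mathbf X^{(I)})=\psi_2(s)$, with $\psi(0)=0$, $\psi'(0)=\E\xi^2=1$, $\psi_2(0)=1$. Averaging over $I$, using $\sum_i(X_i-m)=0$, $\frac1n\sum_i X_i=m$, the expansion $h'(m)=\frac{h^{(2k)}(0)}{(2k-1)!}m^{2k-1}+O(m^{2k+1})$ (only odd powers, since $\rho$ symmetric makes $h$ even), and a Taylor expansion of $\psi$ around $m$, one gets
\[
  \E(\Delta\mid W)=\lambda\bigl(g(W)-R(W)\bigr),\qquad \lambda\asymp n^{1/k-2},
\]
where $R(W)$ absorbs the Taylor remainder of $h'$ and the $O(1/n)$ effect of evaluating at $s=m-X_i/n$ rather than $m$; a direct estimate gives $|R(W)|\asymp n^{-1/k}|W|^{2k+1}\asymp n^{-1/k}|g(W)|^{\tau_2}$ with $\tau_2=(2k+1)/(2k-1)$, so \cref{de2} holds with $\delta_2\asymp n^{-1/k}$. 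For $\hat K_1=\Delta^2/(2\lambda)$, since $\E(\Delta^2\mid\mathbf X)=\frac1n\sum_i\bigl[X_i^2-2X_i\psi(m-X_i/n)+\psi_2(m-X_i/n)\bigr]$, expanding $\psi,\psi_2$ near $0$ and controlling $\E\bigl(\frac1n\sum_i X_i^2\mid W\bigr)$ through the tilting structure of $\P_n$ (where the $K_2$-version of \cref{de1} in \cref{r2.1} is convenient) yields $\E(\hat K_1\mid W)=1+O(n^{-1/k})\bigl(|g(W)|^{\tau_1}+1\bigr)$ for a suitable $\tau_1\le 2/(2k-1)$ — i.e.\ \cref{de1} with $\delta_1\asymp n^{-1/k}$ — while $\hat K_1\le\delta^2/(2\lambda)\le 2L^2=:d_0$ is \cref{d0}. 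Condition \cref{d1} holds with a genuine constant by compactness: $|W|\le Ln^{1/(2k)}$ by \cref{con:gcw01}, so $\delta|g(W)|\le 2Ln^{1/(2k)-1}\cdot\frac{h^{(2k)}(0)}{(2k-1)!}(Ln^{1/(2k)})^{2k-1}=\frac{2h^{(2k)}(0)L^{2k}}{(2k-1)!}=:d_1$. Condition \cref{d2} fails for large $|W|$, so I would invoke \cref{r2.2}: with cutoff $\kappa=\epsilon n^{1/(2k)}$ for small $\epsilon>0$, on $\{|W|\le\kappa\}$ one has $|R(W)|\asymp n^{-1/k}|W|^2\,|g(W)|\le C\epsilon^2(|g(W)|+1)$, which is \cref{r2.2-a} with $\alpha=C\epsilon^2<1$, while $\P(|W|>\kappa)\le 2e^{-c_\rho\epsilon^{2k}n}$ by the concentration bound of \cite{Ch10A}, which decays faster than $e^{-2s_0 d_1^{-1}\delta^{-1}}$, giving \cref{r2.2-b}.

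\noindent\textbf{Conclusion and main obstacle.} Substituting $\delta\asymp n^{1/(2k)-1}$, $\delta_1,\delta_2\asymp n^{-1/k}$, $g(z)\asymp z^{2k-1}$, $\tau_2=(2k+1)/(2k-1)$ and $\tau_1\le 2/(2k-1)$ into \cref{MD}, the $\delta_2$-term contributes $\asymp n^{-1/k}(1+z^{2k+2})$ and, for $k\ge 2$, the $\delta$- and $\delta_1$-terms are of strictly smaller order in the relevant range, while the admissibility constraint $\delta zg^2(z)+\delta_1 zg^{\tau_1+1}(z)+\delta_2 zg^{\tau_2}(z)\le 1$ reduces to $z\le n^{1/(k(2k+2))}$; this gives \cref{thm:3.2}, hence \cref{cor:3.1}. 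I expect the main obstacle to be the error bookkeeping in the middle step: tracking every Taylor remainder and $O(1/n)$ correction in $\E(\Delta\mid W)$ and $\E(\Delta^2\mid W)$ so that they collapse into the polynomial-in-$g(W)$ forms \cref{de1,de2} — in particular the $\E\bigl(\frac1n\sum_i X_i^2\mid W\bigr)$ contribution, where \cref{r2.1} is needed — and, above all, dealing with the genuine failure of \cref{d2} for large $|W|$ via \cref{r2.2}, which forces the cutoff $\kappa$ to grow like $n^{1/(2k)}$ so that \cref{r2.2-a,r2.2-b} hold simultaneously.
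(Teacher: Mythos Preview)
Your proposal is correct and follows essentially the same approach as the paper: the reduction of \cref{cor:3.1} to \cref{thm:3.2} via $k=2$ is immediate, and your sketch of \cref{thm:3.2} matches the paper's proof in all structural points --- the Glauber exchangeable pair, the identification $\lambda=n^{1/k-2}$, $g(w)=\frac{h^{(2k)}(0)}{(2k-1)!}w^{2k-1}$, $\delta_1=\delta_2\asymp n^{-1/k}$, $\tau_1=2/(2k-1)$, $\tau_2=(2k+1)/(2k-1)$, the use of \cref{r2.1} for the fluctuation $\frac1n\sum_i(X_i^2-\E[X_i^2\mid\F^{(i)}])$ (the paper's \cref{lem:5.2}), and the use of \cref{r2.2} with $\kappa\asymp n^{1/(2k)}$ together with the concentration bound of \cite{Ch10A}. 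One minor omission: the conditional law of $X_I'$ given $\mathbf X^{(I)}$ carries an extra $e^{x^2/(2n)}$ factor (this is the paper's $\psi_n$ versus $\psi_\infty$), but its effect is $O(1/n)$ and is absorbed into the same remainder terms you already account for.
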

After we finished this paper, we learnt that \cite{Ca17C} proved \cref{cor:3.1} by a completely different approach.

\begin{remark}
    Comparing to \cite[][Theorem 3.2, (ii)]{Sh18B}, we assume the additional condition that $\rho$ is a symmetric measure.
    Following the proofs of \cref{thm:3.2} and \cite[][Theorem 3.2]{Sh18B}, we have \cref{eq:gcw05} can be improved to
    \begin{align*}
        \sup_{z \in \R}\bigl\lvert \P(W\leq z) - \P(Y\leq z) \bigr\rvert \leq C n^{-1/k}.
    \end{align*}
    \label{rem:3.1}
\end{remark}

\subsection{The imitative monomer-dimer mean-field model}\label{sec:imd}
In this subsection, we consider the imitative monomer-dimer model and give the moderate deviation result. A pure monomer-dimer model can be used to study the properties of diatomic oxygen molecules deposited on tungsten or liquid mixtures with molecules of unequal size, see \citep{Fo37A,Ro38S} for
example.  \citet*{Ch39S} studied the attractive component of the van der Waals potential, while \citet*{Al15L} and \cite{Al14M} considered the asymptotic properties.

\citet*{Ch16L} recently obtained the Berry--Esseen bound by using Stein's method.
In this subsection, we apply our main theorem to obtain the moderate deviation result.

 For $n\geq 1$, let $G=(V,E)$ be a complete graph with vertex set $V=\{1,\dots,n\}$ and edge set $E=\bigl\{uv=\{u,v\}:u,v\in V, u<v \bigr\}$. A dimer configuration on the graph $G$ is a set $D$ of pairwise nonincident edges satisfying the following rule: if $uv\in D$, then for all $w\neq v$, $uw\not\in D$. Given a dimer configuration $D$, the set of monomers $\M(D)$ is the collection of dimer-free vertices.
Let $\D$ denote the set of all dimer configurations. Denote the number of elements by $\#(\cdot)$. Then, we have
\[
2\#(D) + \#(\M(D)) = n.
\]

We now introduce the imitative monomer-dimer model. The Hamiltonian of the model with an imitation coefficient $J\geq 0$ and an external field $h\in \R$ is given by
\[
    -T(D) = n(J m(D)^2 + b m(D))
\]
for all $D\in \D$, where $m(D) = \#(\M(D))/n$ is called the monomer density and the parameter $b$ is given by
\[
 b = \frac{\log n}{ 2}+ h - J .
\]
The associated Gibbs measure is defined as
\[
    p(D) = \frac{e^{-T(D)}}{ \sum_{D\in \D}  e^{-T(D)}}.
\]

Let
\begin{align}
H(x) = -Jx^2 - \frac{1}{ 2} \left( 1 - g \bigl(\tau (x) \bigr)+ \log \bigl(1-g( \tau(x))\bigr) \right),
\label{eq:Hg}
\end{align}
where
\[
g(x) = \frac{1}{ 2 } \left( \sqrt{e^{4x} + 4 e^{2x}} - e^{2x} \right), \quad \tau(x) = (2x-1) J + h.
\]
\citet*{Al14M} showed that the imitative monomer-dimer model exhibits the following three phases.
Let $$
J_c = \frac{1}{ 4(3-2\sqrt{2})} , \quad h_c = \frac{1}{ 2} \log(2\sqrt{2} -2 ) - \frac{1}{ 4}.
$$
There exists a function $\gamma : (J_c,\infty) \rightarrow \R$ with $\gamma (J_c) = h_c$ such that
if $(J,h)\not\in\Gamma$, where $\Gamma := \{(J, \gamma(J)):J>J_c\}$,
then the function $ H(x)$
has a unique maximizer $m_0$ that satisfies
$m_0 = g(\tau(m_0))$.
Moreover, if $(J,h) \not\in \Gamma \cup \{ (J_c,h_c) \}$, then $H''(m_0)<0$. If $(J,h) =(J_c,h_c)$, then $m_0=m_c:= 2 - \sqrt{2}$ and
\[
H'(m_c)= H''(m_c)=H^{(3)}(m_c)=0,
\]
but
\[
H^{(4)}(m_c) <0.
\]

If $(J,h)\in \Gamma$, then $H(s)$
has two distinct maximizers; therefore, in this case, $m(D)$ may not converge. Hence, we consider only the cases when $(J,h)\not\in \Gamma$.

\citet*{Al14M} showed that when $(J,h)\not\in\Gamma \cup \{(J_c,h_c)\}$, $n^{1/2}(m(D)-m_0)$ converges to a normal distribution with zero mean and variance $\lambda_0 = -(H''(m_0))^{-1} - (2J)^{-1}$. However, when $(J,h)=(J_c,h_c)$, $n^{1/4}(m(D) - m_0)$ converges to $Y$ in distribution, whose p.d.f. is given by
\begin{equation}\label{Yden}
p(y) = c_1 e^{-\lambda_c y^4/24}
\end{equation}
with $\lambda_c = - H^{(4)}(m_c) > 0$ and $c_1$ is a normalizing constant. \citet*{Ch16L} obtained the Berry--Esseen bound using Stein's method.

We use the following notations.
Let $\Sigma = \{0,1\}^n$. For each $\sigma=(\sigma_1, \dots ,  \sigma_n) \in \Sigma,$ define a Hamiltonian
\[
-T(\sigma) = n(J m(\sigma)^2 + b m(\sigma)),
\]
where $m(\sigma) = n^{-1}(\sigma_1+ \cdots + \sigma_n)$ is the magnetization of the configuration $\sigma$. Denote by $\A(\sigma)$ the set of all sites $i \in V$ such that $\sigma_i = 1$. Also, let $D(\sigma)$ denote the total number of dimer configurations $D\in \D$ with $\M(D) = \A(\sigma)$. Therefore, the Gibbs measure can be written as
\[
p(\sigma) = \frac{D(\sigma) \exp(-T(\sigma)) }{ \sum_{\tau\in \Sigma} D(\tau) \exp(-T(\tau))}.
\]

The following result gives a Cram\'er-type moderate deviation for the magnetization.

\begin{theorem}\label{thm:3.3}
    If $(J,h) \not\in\Gamma\cup \{J_c,h_c\}$, then, for $0 \leq z\leq n^{1/6}$,
  \begin{equation}\lbl{res:imdnormal}
    \frac{\P(n^{1/2}(m(\sigma)-m_0)> z) }{ \P(Z_0>z)} = 1 + O(1)n^{-1/2}(1+z^3),
  \end{equation}
  where $Z_0 $ follows normal distribution with zero mean and variance $\lambda_0 = -(H''(m_0))^{-1} - (2J)^{-1}$. If $(J,h)=(J_c,h_c)$, then
  for $0 \leq z \leq n^{1/20},$
  \begin{equation}\lbl{res:imdnonnormal}
    \frac{\P(n^{1/4}(m(\sigma)-m_c)> z) }{  \P(Y> z)} = 1+ O(1) n^{-1/4}(1+z^5),
  \end{equation}
  where $Y$ is a random variable with the probability density function given in  \eqref{Yden}.
\end{theorem}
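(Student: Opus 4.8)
The plan is to obtain both halves of \cref{thm:3.3} from \cref{cor:2.1}, the exchangeable-pair form of \cref{thm:2.1}, so that the task reduces to producing a suitable exchangeable pair and verifying \cref{de1,de2,d0,d1,d2}. Following \cite{Ch16L}, I would take $(\sigma,\sigma')$ generated by picking a site $I$ uniformly at random and resampling $\sigma_I$ from its conditional law given $(\sigma_j)_{j\ne I}$; this pair is exchangeable and $|m(\sigma')-m(\sigma)|\le 1/n$. When $(J,h)\notin\Gamma\cup\{(J_c,h_c)\}$ put $W=n^{1/2}(m(\sigma)-m_0)$ and take the Stein function to be $g(w)=w/\lambda_0$, so that $G(y)=y^2/(2\lambda_0)$ and \cref{y:den} is the $N(0,\lambda_0)$ density; at criticality put $W=n^{1/4}(m(\sigma)-m_c)$ and take $g(w)=\lambda_c w^3/6$, so that $G(y)=\lambda_c y^4/24$ and \cref{y:den} becomes \cref{Yden}. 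In both cases the Stein function is $\sgn(w)|w|^p$ with $p=1$, resp. $p=3$, so \cref{icon1,icon2,icon3,icon4} hold automatically.

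The core of the proof is the conditional drift $\E(\Delta\mid\sigma)$ with $\Delta=W-W'$. Because the Gibbs weight $D(\sigma)e^{-T(\sigma)}$ depends on $\sigma$ only through $m(\sigma)$, the conditional probability $\P(\sigma_I=1\mid(\sigma_j)_{j\ne I})$ equals, up to an $O(1/n)$ correction, the value at $m(\sigma)$ of the map $x\mapsto g(\tau(x))$ of \cref{eq:Hg} (with $g,\tau$ as in that display); this is precisely the reduction performed in \cite{Al14M} and \cite{Ch16L}, and it gives $\E(\Delta\mid\sigma)=-c_n\bigl(g(\tau(m(\sigma)))-m(\sigma)\bigr)+(\text{lower order})$ with $c_n\asymp n^{-1/2}$, resp. $n^{-3/4}$. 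Using the self-consistency $m_0=g(\tau(m_0))$ (resp. $m_c=g(\tau(m_c))$) and the derivative data for $H$ --- $H'(m_0)=0,\ H''(m_0)<0$ subcritically, and $H'(m_c)=H''(m_c)=H^{(3)}(m_c)=0,\ H^{(4)}(m_c)=-\lambda_c<0$ at criticality --- I would Taylor-expand $g(\tau(m))-m$ at $m_0$ (resp. $m_c$) to reach a representation $\E(\Delta\mid W)=\lambda(g(W)-R(W))$ with $\lambda\asymp n^{-1}$ (resp. $n^{-3/2}$) and $|R(W)|\le\delta_2(|g(W)|^{\tau_2}+1)$, where $\delta_2\asymp n^{-1/2},\ \tau_2=2$ (resp. $\delta_2\asymp n^{-1/4},\ \tau_2=4/3$), the remainder being governed by the first further nonvanishing Taylor coefficient of $H$. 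A parallel expansion of the one-step second moment, which by the balance between step variance and drift coefficient satisfies $\E(\Delta^2\mid W)=2\lambda(1+o(1))$ --- an estimate already present in the Berry--Esseen analysis of \cite{Ch16L} --- yields $\hatK_1=\Delta^2/(2\lambda)$ with $\E(\hatK_1\mid W)\le d_0$ and $|\E(\hatK_1\mid W)-1|\le\delta_1(|g(W)|^{\tau_1}+1)$, where $\delta_1\asymp n^{-1/2},\ \tau_1=1$ (resp. $\delta_1\asymp n^{-1/4},\ \tau_1=1/3$); and since $m(\sigma)\in[0,1]$ forces $|g(W)|$ to be at most a constant times $n^{1/2}$ (resp. $n^{3/4}$), one has $\delta\,|g(W)|\le d_1$ with $\delta\asymp n^{-1/2}$ (resp. $n^{-3/4}$). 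Feeding these orders into \cref{MD}, the three error terms are $O(n^{-1/2}(1+z^3))$ subcritically and $O(n^{-3/4}(1+z^7))$, $O(n^{-1/4}(1+z^5))$, $O(n^{-1/4}(1+z^5))$ at criticality; on the ranges $z\le n^{1/6}$, resp. $z\le n^{1/20}$, the first of the critical contributions is dominated by the other two, and these are, up to constants, exactly the ranges on which $\delta zg^2(z)+\delta_1 zg^{\tau_1+1}(z)+\delta_2 zg^{\tau_2}(z)\le1$; this gives \cref{res:imdnormal} and \cref{res:imdnonnormal}.

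Two points require extra care, and I expect the second to be the genuine obstacle. First, the Taylor estimate for $R(W)$ is only local: once $m(\sigma)$ is bounded away from $m_0$ (resp. $m_c$), i.e. $|W|$ is of order $n^{1/2}$ (resp. $n^{1/4}$), $|R(W)|$ is no longer $\le\alpha(|g(W)|+1)$ with $\alpha<1$, so \cref{d2} fails; this is handled through \cref{r2.2}, verifying \cref{r2.2-a} on $\{|W|\le\kappa\}$ and \cref{r2.2-b} by means of a concentration inequality of the form $\P(n^{1/2}|m(\sigma)-m_0|\ge t)\le Ce^{-ct^2}$ (resp. $\P(n^{1/4}|m(\sigma)-m_c|\ge t)\le Ce^{-ct^4}$), obtainable by Stein's method as in \cite{Ch10A} for the Curie--Weiss model or extractable from \cite{Ch16L}, with $\kappa$ chosen to grow just fast enough that the constants in \cref{r2.2-b} match. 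Second, the reduction of the monomer--dimer conditional law to a function of the magnetization --- the precise form of $\P(\sigma_I=1\mid(\sigma_j)_{j\ne I})$, the size of its correction term, and the balance identity forcing $\E(\hatK_1\mid W)\to1$ --- is the only genuinely model-specific ingredient; it is exactly the computation of \cite{Al14M} and \cite{Ch16L}, and the difficulty is to propagate their $n$-dependent error bounds uniformly over the whole range of $W$ rather than merely near equilibrium. Finally, in the subcritical case one may alternatively observe that, after the recentering $m\mapsto m-m_0$, the problem falls directly under the moderate deviation theorem for normal approximation of \cite{Ch13S}.
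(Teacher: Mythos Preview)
Your overall strategy---verify the hypotheses of \cref{thm:2.1} via the exchangeable-pair corollary, with the escape hatch of \cref{r2.2} for the failure of \cref{d2} at large $|W|$---is exactly what the paper does, and your identification of the parameters $(\delta,\delta_1,\delta_2,\tau_1,\tau_2)$ in both regimes is correct and matches the paper's. The use of a large-deviation/concentration bound from \cite{Ch16L} to check \cref{r2.2-b} is also what the paper does (though the paper uses the cruder form $\P(|m(\sigma)-m_0|\ge u)\le Ce^{-n\eta}$ for fixed $u$, which is what \cite{Ch16L} actually provides and which suffices since $\kappa$ is of order $n^{1/2}$, resp.\ $n^{1/4}$).

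There is, however, a genuine gap in your construction of the exchangeable pair. You propose to resample a \emph{single} site $\sigma_I$ from its conditional law given $(\sigma_j)_{j\ne I}$. But the Gibbs weight is $D(\sigma)e^{-T(\sigma)}$, and $D(\sigma)$---the number of perfect matchings of the non-monomer set $V\setminus\mathcal{A}(\sigma)$---vanishes unless $n-\sum_i\sigma_i$ is even. Hence the Gibbs measure is supported on configurations of a fixed parity of $M=\sum_i\sigma_i$, and conditioning on $(\sigma_j)_{j\ne I}$ determines $\sigma_I$ deterministically: the single-site Glauber update is degenerate and yields $\Delta\equiv 0$. This is precisely why \cite{Ch16L}, and the paper following it, use a \emph{two-site} update: pick a uniformly random edge $uv$ and resample the pair $(\sigma_u,\sigma_v)$ from its conditional law given $(\sigma_j)_{j\ne u,v}$. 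With that construction one has $|M-M'|\le 2$, so $\delta=2n^{-1/2}$ (resp.\ $2n^{-3/4}$), and \cite{Ch16L} supplies closed-form expressions
\[
\E[M-M'\mid\sigma]=L_1(m(\sigma))+O(1/n),\qquad \E[(M-M')^2\mid\sigma]=L_2(m(\sigma))+O(1/n),
\]
with explicit $L_1,L_2$; the paper then Taylor-expands $L_1$ at $m_0$ (resp.\ $m_c$), using $L_1(m_0)=0$, $L_1'(m_0)=\tfrac{1}{2\lambda_0}L_2(m_0)$ in the first case and $L_1(m_c)=L_1'(m_c)=L_1''(m_c)=0$, $L_1^{(3)}(m_c)=\tfrac{\lambda_c}{2}L_2(m_c)$ in the second. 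Once you replace your single-site pair by this two-site pair and quote these identities from \cite{Ch16L}, the remainder of your outline goes through unchanged.
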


\section{Proofs of main results}\label{proof}

\setcounter{equation}{0}

In this section, we give the proofs of the main theorems. 
In what follows, we use $C$ or $C_1, C_2, \ldots$ to denote a finite constant depending only on  $c_1, c_2, c_3, d_0, d_1, \tau_1, \tau_2, \mu_1$ and  $\alpha$, where $\mu_1 =  \max(g(1), |g(-1)|)  + 1$, and \(C\) might be different in different places.

\subsection{Proof of Theorem 2.1}\label{sub:proofmain}

Let $Y$ be a random variable with a probability density function given in \eqref{y:den}
and $F(z)$ be the distribution function of $Y$. We start with a
preliminary lemma on the properties of $(1 - F(w)) / p(w)$ and $F(w)/p(w)$, whose proof is postponed to \cref{sub:lem4}.

\begin{lemma}\label{lem:4.1}
  Assume that \cref{icon1,icon2,icon3,icon4} are satisfied. Then, we have
\begin{equation}\label{wg0}
    \frac{1}{ \max(1, c_3) (1+g(w))} \leq
\frac{1-F(w)}{ p(w)}\leq
\min\biggl\{\frac{1}{ g(w)},1/c_1 \biggr\} \ \ \mbox{for} \  \ w >0
 \end{equation}
  and
\begin{equation}\label{wg<0}
    \frac{F(w)}{ p(w)}\leq \min\left\{\frac{1}{ |g(w)|},1/c_1\right\} \ \ \mbox{for} \ \ w <0.
\end{equation}

\end{lemma}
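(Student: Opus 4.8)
The plan is to rewrite the two ratios as integrals of exponentials of increments of $G$, read off the upper bounds from the monotonicity of $g$, and prove the lower bound via a first-order differential inequality. Since $p(y)=c_1e^{-G(y)}$, for $w>0$ we have
\[
\frac{1-F(w)}{p(w)}=e^{G(w)}\int_w^\infty e^{-G(t)}\,dt=\int_0^\infty e^{-(G(w+u)-G(w))}\,du ,
\]
and likewise $F(w)/p(w)=\int_0^\infty e^{-(G(w-u)-G(w))}\,du$ for $w<0$ after the substitution $t=w-u$. The two upper bounds then follow from \cref{icon1,icon2}: since $g$ is nondecreasing with $g(0)=0$, for $w,u>0$ one has $G(w+u)-G(w)=\int_0^u g(w+s)\,ds\ge\max\{u\,g(w),\,G(u)\}$, so integrating $e^{-(G(w+u)-G(w))}$ over $(0,\infty)$ and using $g(w)>0$ together with $c_1\int_{\R}e^{-G}=1$ (since $p$ is a density) gives $\tfrac{1-F(w)}{p(w)}\le\min\{1/g(w),1/c_1\}$. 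The estimate \cref{wg<0} is obtained in exactly the same way, now using $g(t)\le g(w)<0$ on $(w-u,w)$.

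The lower bound in \cref{wg0} is the real content. Put $c^\ast:=\max\{1,c_3\}$ and
\[
\phi(w):=c^\ast\bigl(1+g(w)\bigr)\int_w^\infty e^{-G(t)}\,dt-e^{-G(w)} ,
\]
so that the left inequality of \cref{wg0} is precisely $\phi(w)\ge0$ for $w>0$. Differentiating and using the definition of $\phi$ to eliminate the integral term gives
\[
\phi'(w)=\frac{g'(w)}{1+g(w)}\,\phi(w)+e^{-G(w)}\Bigl(\tfrac{g'(w)}{1+g(w)}-c^\ast-(c^\ast-1)g(w)\Bigr).
\]
By \cref{icon4}, $g'(w)/(1+g(w))\le c_3/(1+w)\le c^\ast$, so the last parenthesis is $\le-(c^\ast-1)g(w)\le0$, whence $\bigl(\phi(w)/(1+g(w))\bigr)'\le0$ on $(0,\infty)$. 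Since $\int_w^\infty e^{-G}\to0$ and $e^{-G(w)}\to0$ as $w\to\infty$ (the latter because $g(1)>0$ forces $G(w)\to\infty$ by \cref{icon2}), we get $\phi(w)/(1+g(w))\to0$; a nonincreasing function converging to $0$ at $+\infty$ is nonnegative, so $\phi\ge0$ on $(0,\infty)$, which is \cref{wg0}.

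I expect the lower bound to be the only obstacle: the point is to recognise that $1/(1+g(w))$ is the integrating factor that turns the growth estimate \cref{icon4} into genuine monotonicity of $\phi/(1+g)$, and then to extract the sign from the behaviour at $+\infty$ rather than at $w=0$; the constant $\max\{1,c_3\}$ (not $c_3$) is exactly what keeps the parenthesis above nonpositive when $g(w)$ is large. Note that \cref{icon3} plays no role in this lemma; it enters only in the subsequent estimates.
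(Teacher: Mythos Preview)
Your proof is correct and, for the lower bound in \cref{wg0}, is essentially the paper's argument repackaged: writing $\phi(w)/(1+g(w))=c^\ast\int_w^\infty e^{-G}-\zeta_0(w)$ with $\zeta_0(w)=e^{-G(w)}/(1+g(w))$, your inequality $(\phi/(1+g))'\le0$ is literally the paper's bound $-\zeta_0'(w)\le c^\ast e^{-G(w)}$, and your limit argument at $+\infty$ is their integration $\zeta_0(w)=-\int_w^\infty\zeta_0'\le c^\ast\int_w^\infty e^{-G}$. For the upper bounds you take a slightly different route: the paper gets the $1/c_1$ estimate by analysing the critical points of $H(w)=1-F(w)-p(w)/c_1$, whereas your observation $G(w+u)-G(w)\ge G(u)$ (from monotonicity of $g$ and $w>0$) followed by $\int_0^\infty e^{-G(u)}\,du\le\int_{\R}e^{-G}=1/c_1$ is cleaner and avoids that case analysis. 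Either way the content is the same; your remark that \cref{icon3} is not used here is also correct.
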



Let $f_z$ be the solution to Stein's equation
\begin{equation}
\label{ste:eq}
 f'(w)-f(w)g(w)=\I(w\leq z)-F(z).
\end{equation}
As shown in \citet*{Ch11Nb}, the solution $f_z$ can be written as
\begin{equation}\label{eq:solution}
f_z(w)=\begin{cases}
\dfrac{F(w)(1-F(z))}{p(w)},& w\leq z;\\[5mm]
\dfrac{F(z)(1-F(w))}{p(w)},& w> z.
\end{cases}
\end{equation}

Let
\begin{equ}
	\label{eq:I1234}
I_1&= \E\Bigl( \inttd \big| f_z(W+t)g(W+t)-f_z(W)g(W)  \big|\hatK (t)dt\Bigr),\\
I_2&=\E(|(\E(\hatK_1|W)-1)f_z(W)g(W)|), \\
I_3&=\E\left(|(\E(\hatK_1|W) -1)(P(Y>z)-\I(W>z+\delta))|\right), \\
I_4&=\E(f_z(W)|R(W)|).
\end{equ}
The following propositions provide estimates of $I_1, I_2, I_3$ and $I_4$, whose proofs are given in Subsection 4.4.

\begin{proposition}
\label{pro:4.1}
If $\delta \leq 1$, then
  \begin{equation}
      I_1 \leq  C \delta.
    \label{IA}
  \end{equation}

  Assume that $z\geq 0, \max(\delta, \delta_1, \delta_2) \leq 1$ and $\delta z g^2(z) + \delta_1 z g^{\tau_1 + 1} (z) + \delta_2 z g^{\tau_2} (z) \leq 1$. 
  Then, we have
  \begin{equation}
    I_1\leq C\delta (1+zg^2(z)) (1-F(z)).
    \lbl{IB}
  \end{equation}
  \end{proposition}

\begin{proposition}
\label{pro:4.2}
We have
\begin{align}
    I_2 + I_3 \leq C \delta_1, \quad I_4 \leq C \delta_2.
    \label{pro:4.2-00}
\end{align}
For $z > 0, \max(\delta, \delta_1, \delta_2) \leq 1$ and $\delta z g^2(z) + \delta_1 z g^{\tau_1 + 1} (z) + \delta_2 z g^{\tau_2} (z) \leq 1$, we have
\begin{align}
    I_2 + I_3 & \leq C   \delta_1 \bigl( 1 + z g^{\tau_1 + 1}(z) \bigr)   (1 - F(z)),
    \label{pro:4.2-01}\\
    I_4 & \leq C \delta_2 \bigl( 1 + z g^{\tau_2}(z) \bigr)   (1 - F(z)).
    \label{pro:4.2-02}
\end{align}

\end{proposition}

We are ready to give the proof of \cref{thm:2.1}.
\begin{proof}[Proof of \cref{thm:2.1}]
From \eqref{c1}, we have
\begin{equ}
\ML {\E(f_z(W)g(W)-f_z(W)R(W))}\nn\\
={}& \E\left(\inttd f'_z(W+t)\hat{K}(t)dt\right)\nn\\
={}& \E\Bigl(\inttd \big( f_z(W+t)g(W+t) \\
   & \hspace{2cm} +\P(Y>z)-\I(W+t>z)   \big)\hatK (t)dt\Bigr)\nn\\
\leq {}& \E \Bigl(\inttd \big( f_z(W+t)g(W+t)-f_z(W)g(W)  \big)\hatK (t)dt\Bigr) \\
       & +\E(\hatK_1 f_z(W)g(W) )\nn\\
& + \E\left(\hatK_1\big( \P(Y>z)-\I(W>z+\delta) \big)\right)\nn\\
\leq {}&  \E\left( \inttd \big| f_z(W+t)g(W+t)-f_z(W)g(W)  \big|\hatK (t)dt\right) \\
       & +\E(\hatK_1 f_z(W)g(W) )\nn\\
& + \E\left(|\E(\hatK_1 | W) -1|\big| P(Y>z)-\I(W>z+\delta) \big|\right) \\
& +\P(Y>z)-\P(W>z+\delta). \label{in:e1}
\end{equ}
Rearranging \eqref{in:e1} leads to
\begin{equ}
 {\P(W>z+\delta)-\P(Y>z)} \leq I_1 + I_2 + I_3 + I_4,
 \label{in:e2}
\end{equ}
where \(I_1, I_2, I_3\) and \(I_4\) are defined as in \cref{eq:I1234}.

First, we use \eqref{in:e2} and \cref{pro:4.1,pro:4.2} to prove the Berry--Esseen bound
\begin{align}
       {| \P(W>z)-\P(Y>z)| }\leq   C (\delta + \delta_1 + \delta_2), 
\lbl{BS-a}
\end{align}
where $C \geq  1$. 
By \cref{in:e2,IA,pro:4.2-00}, for $\delta \leq 1$, we have
\begin{align}
    {\P(W>z+\delta)-P(Y>z)}  \leq C(\delta+ \delta_1 + \delta_2). \label{bon:I}
\end{align}
Together with
\[
    \P(Y>z)-\P(Y>z+\delta)\leq c_1\int_z^{z+\delta} e^{-G(w)}dw\leq c_1\delta,
\]
we have
$$
\P( W > z) - \P(Y >z) \leq C ( \delta + \delta_1 + \delta_2).
$$
Similarly, we have
$$
\P( W > z) - \P(Y >z) \geq -  C ( \delta + \delta_1 + \delta_2).
$$
This proves the inequality \eqref{BS-a} for $\delta \leq 1$. For $\delta > 1$, \cref{BS-a} is trivial because $C \geq 1$.

 Next, we move to prove \eqref{MD}. Let  $z_0>1$ be a constant such that
\[
\min\bigl\{ z_0g^2(z_0), z_0g^{\tau_1+1}(z_0), z_0g^{\tau_2}(z_0), z_0  \bigr\}\geq 1.
\]

For $0 \leq z \leq z_0$, \eqref{MD} follows from \eqref{BS-a} because
\begin{align}
\frac{\P(W>z)-\P(Y>z)}{ \P(Y>z)}\leq \frac{C (\delta+\delta_1+\delta_2)}{ 1-F(z_0)}, \nn
\end{align}
where $C$ is a constant.

For $z > z_0$, and thus \(z >1\),  we can assume
$\max\{\delta,\delta_1,\delta_2  \}\leq 1$; otherwise, it would contradict the condition
\begin{align}
	\label{eq:conditionzzz}
	\delta zg^2(z)+\delta_1zg^{\tau_1+1}(z)+ \delta_2zg^{\tau_2}(z)\leq 1.
\end{align}

In this case, it follows that
  \begin{equ}
	  \label{eq:delta-gz}
	  \delta \leq 1, \quad \delta g^2(z) \leq \delta z g^2(z) \leq 1,
  \end{equ}
  provided that \cref{eq:conditionzzz} holds.

  By \eqref{in:e2} and \cref{pro:4.1,pro:4.2},
\begin{equ}\label{del}
    \ML {\P(W> z+\delta )-\bigl(1-F(z)\bigr)}\nn\\
	\leq {}& I_1 + I_2 +I_3 + I_4 \\
    \leq {}&   C (1-F(z))  \Bigl( \delta (1 + zg^2(z)) \\
           & \hspace{2.5cm} + \delta_1 (1 + zg^{\tau_1 + 1} (z)) + \delta_2 (1 + zg^{\tau_2} (z))\Bigr)  .
\end{equ}
By replacing $z$ with $z-\delta,$ and noting that $g$ is nondecreasing, we can rewrite \eqref{del}  as
\begin{equ}\label{delrep}
\ML {\P(W> z )-(1-F(z -\delta ))}\nn\\  \leq {}&  C (1-F(z-\delta )) \Bigl( \delta (1 + zg^2(z)) + \delta_1 (1 + zg^{\tau_1 + 1} (z))\\
                                               &  \phantom{ C (1-F(z-\delta )) \Bigl( \delta} + \delta_2 (1 + zg^{\tau_2} (z))\Bigr)  .
\end{equ}

As $p(y)$ is decreasing in $[z-\delta,z]$, we have
\begin{align*}
F(z)-F(z-\delta)&= \int_{z-\delta}^zp(t)dt\nn\\
&\leq \delta p(z-\delta)
\leq  e^{\delta g(z)} \delta p(z).
\end{align*}

By \cref{eq:delta-gz}, it follows that $\delta g(z)\leq (1/2) \delta (1 + g^2(z)) \leq 1$. By \eqref{wg0}, we also have
\[
    p(z)\leq \max(1, c_3) (1+g(z))(1-F(z));
\]
then,
\[
F(z)-F(z-\delta)\leq C\delta(1+g(z))(1-F(z))
\]
for some constant $C$.
Recall that $\delta(1+g(z))\leq 2$; then,
\[
1-F(z-\delta)\leq C(1-F(z)).
\]
Together with \eqref{delrep}, we get
\begin{align*}
\ML {\P(W> z )-(1-F(z))}\\  \leq {}&
\P(W> z) - (1-F(z-\delta)) + F(z) - F(z-\delta) \\
  \leq {}&
C (1-F(z-\delta ))\Bigl( \delta (1 + zg^2(z)) + \delta_1 (1 + zg^{\tau_1 + 1} (z)) + \delta_2 (1 + zg^{\tau_2} (z))\Bigr)  \\
{}&  + C\delta(1+g(z))(1-F(z))\\
 \leq {}&  C (1-F(z))\Bigl( \delta (1 + zg^2(z)) + \delta_1 (1 + zg^{\tau_1 + 1} (z)) + \delta_2 (1 + zg^{\tau_2} (z))\Bigr)  .
\end{align*}
Similarly, we can prove the lower bound as follows:
\begin{align*}
\ML {\P(W> z )-(1-F(z))}\\  \geq {}&
- C (1-F(z))\Bigl( \delta (1 + zg^2(z)) + \delta_1 (1 + zg^{\tau_1 + 1} (z)) + \delta_2 (1 + zg^{\tau_2} (z))\Bigr)  .
\end{align*}
This completes the proof of \cref{thm:2.1}.
\end{proof}


\subsection{Proof of Lemma 4.1}
\label{sub:lem4}
For $w\geq 0$, by the monotonicity of $g(\cdot)$,
we have
	\begin{align*}
	1-F(w)={}&\int_w^\infty p(t) dt\nn\\
	={}& c_1\int_w^\infty e^{-G(t)}dt\nn\\
	={}& c_1\int_w^\infty
    \frac{1}{ g(t)} e^{-G(t)} dG(t) \nn\\
	\leq {}&
    \frac{c_1}{ g(w)} e^{-G(w)}\nn\\
    ={}&
    \frac{p(w)}{ g(w)}.\nn
	\end{align*}
Let $H(w)=1-F(w)-p(w)/c_1$; then,
\begin{align*}
    H'(w) = p(w) (g(w)/c_1 - 1).
\end{align*}
Note that $g(w)/c_1 = 1$ has at most one solution in $(0, +\infty)$ and that $g(0) = 0$; then, $H(w)$ takes the maximum at either $0$ or $+\infty$. We have
\begin{align*}
    H(w) \leq \max \bigl\{H(0), \lim_{w\to \infty} H(w)\bigr\} \leq 0.
\end{align*}
This proves the upper bound of \cref{wg0}. The inequality \eqref{wg<0} can be obtained similarly.

To finish the proof, we need to prove that for $w\geq 0$,
\begin{equation}\lbl{c211}
	\frac{p(w) }{ 1+g(w)} \leq \max (1, c_3) (1-F(w)).
\end{equation}
Let
\begin{equation}
	\zeta(w)=\frac{1}{ 1+g(w)}e^{-G(w)}.\nn
	\end{equation}
	As $g'(w)\leq c_3(1+g(w)),$ we have
	\[
	- \zeta'(w)= \frac{g(w)}{ 1+g(w)} e^{-G(w)}
    +\frac{g'(w)}{ (1+g(w))^2} e^{-G(w)} \leq \max (1, c_3)  e^{-G(w)}.
	\]
    As $g(w)$ is nondecreasing and $g(w ) > 0$ for $w > 0$, then $G(w) = \int_0^w g(t) dt \to \infty$ as $w \to \infty$. Therefore,  $\lim_{w \to \infty} p(w) = 0$.
    Taking the integration on both sides yields
    \begin{align*}
        \zeta(w) =  - \int_w^\infty \zeta'(t) dt  \leq \max( 1, c_3)  \int_w^\infty e^{-G(t) } dt,
    \end{align*}
    which leads to \cref{c211}.
    This completes the proof.

\subsection{Preliminary lemmas}%
\label{sub:preliminary_lemmas}

To prove \cref{pro:4.1,pro:4.2}, we first present some preliminary lemmas. Throughout this subsection, we assume that conditions (A1)–(A4) are satisfied.

\begin{lemma}\label{lem:4.2}
Assume that $ 0 < \delta \leq 1$. Then, we have
\begin{equation}\label{bon2}
    \sup_{|t|\leq \delta}|g(w+t)|\leq c_2(|g(w)|+ \mu_1 ),
\end{equation}
where $ \mu_1 = \max(g(1), |g(-1)|)  +1$.

Also, for $w>s> 0$ and any positive number $a>1$, there exists $b(a)$ depending on $a$, $c_2$ and $c_3$,  such that
\begin{equation}\label{bon4}
g(w)-g(w-s)\leq \frac{1}{ a}\, g(w)+b(a)(g(s)+1),
\end{equation}
where one can choose
\[b(a) = \bigl(  (2c_2) + \cdots + (2c_2)^{m(a)} \bigr)  + 1/a,
\]
and  $m(a) = [\log_2 (ac_3 + 1)] + 1$. 
\end{lemma}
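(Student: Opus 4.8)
The goal is to prove Lemma 4.2, which contains two separate assertions: the sup bound (\ref{bon2}) on $|g(w+t)|$ over $|t|\le\delta$, and the ``absorption'' inequality (\ref{bon4}) bounding $g(w)-g(w-s)$ by $\frac1a g(w)+b(a)(g(s)+1)$. I would prove them in that order since the second is considerably more delicate.

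\textbf{Step 1: the sup bound (\ref{bon2}).} This is a direct consequence of the subadditivity-type condition \cref{icon3}. For $|t|\le\delta\le 1$, write $g(w+t)$ and apply \eqref{c2} with $x=w$, $y=t$ to get $|g(w+t)|\le c_2(|g(w)|+|g(t)|+1)$. Since $g$ is nondecreasing (\cref{icon1}) and $g(0)=0$, for $|t|\le 1$ we have $|g(t)|\le\max(g(1),|g(-1)|)$, so $|g(t)|+1\le\mu_1$. Taking the supremum over $|t|\le\delta$ yields $\sup_{|t|\le\delta}|g(w+t)|\le c_2(|g(w)|+\mu_1)$, which is (\ref{bon2}). (One should note $c_2\ge 1$ may be assumed WLOG, or absorb constants; in any case the stated form follows.)

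\textbf{Step 2: the absorption inequality (\ref{bon4}).} The idea is an iteration/telescoping argument exploiting the logarithmic-derivative bound \cref{icon4}: $g'(y)\le c_3(1+g(y))/(1+y)$ for $y>0$. Fix $a>1$ and set $m=m(a)=[\log(ac_3+1)]+1$. I would argue by cases on the size of $w$ relative to $s$. If $w\le 2s$, then by monotonicity $g(w)-g(w-s)\le g(w)\le g(2s)\le c_2(2g(s)+1)\le 2c_2(g(s)+1)$, which is dominated by $b(a)(g(s)+1)$. If $w>2s$, I would iterate the halving: consider the points $w_0=w$, and repeatedly the gap $g(w_j)-g(w_j-s)$. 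The key estimate is that if $w-s\ge w/2$ (i.e. $w\ge 2s$), then by the mean value theorem $g(w)-g(w-s)=g'(\xi)s$ for some $\xi\in(w-s,w)$, and $g'(\xi)\le c_3(1+g(\xi))/(1+\xi)\le c_3(1+g(w))/(1+w/2)\le 2c_3(1+g(w))/w \le 2c_3(1+g(w)) s/(w s)$... more cleanly: $g'(\xi)s\le c_3 s(1+g(w))/(1+w-s)\le c_3 s(1+g(w))/(1+w/2)$. So each step either reduces $w$ by a factor (when we can't absorb directly) contributing a factor $2c_2$ as in the claimed form of $b(a)$, or terminates. After $m$ steps the accumulated coefficient on $g(w)$ drops below $1/a$ because $(1+c_3)^{-1}$-type contractions compound; the residual terms sum to at most $(2c_2)+\cdots+(2c_2)^{m(a)}$ times $(g(s)+1)$, plus the final $1/a$ term, matching the stated $b(a)$.

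\textbf{Main obstacle.} The hard part is organizing the iteration in Step 2 so that the constants come out \emph{exactly} as $b(a)=\big((2c_2)+\cdots+(2c_2)^{m(a)}\big)+1/a$ with $m(a)=[\log(ac_3+1)]+1$. The mechanism is: using $g'(y)(1+y)\le c_3(1+g(y))$, one shows $1+g(w)\le(1+g(w-s))\cdot\big(1+\tfrac{c_3 s}{1+w-s}\big)^{?}$ — really an integral bound $\log(1+g(w))-\log(1+g(w-s))\le \int_{w-s}^{w}\frac{c_3}{1+y}\,dy=c_3\log\frac{1+w}{1+w-s}$, so $1+g(w)\le(1+g(w-s))\big(\frac{1+w}{1+w-s}\big)^{c_3}$. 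Then, carefully choosing how far to ``back off'' from $w$ (by roughly doubling distance each time until the ratio $\frac{1+w}{1+(\text{backoff point})}$ exceeds a threshold forcing $g$ at the backoff point to be a small fraction of $g(w)$), one needs about $m(a)$ doublings, and each doubling of the interval costs a factor $2c_2$ via \cref{icon3} when re-expressing $g$ at the intermediate point in terms of $g(s)$. I would set this up as a finite induction on $j=1,\dots,m(a)$, tracking the invariant that after $j$ steps $g(w)\le \lambda_j g(w_j)+\mu_j(g(s)+1)$ with $\lambda_j$ shrinking geometrically and $\mu_j\le(2c_2)+\cdots+(2c_2)^j$, then stopping once $\lambda_{m(a)}\le 1/a$ and bounding the leftover $g(w_{m(a)})$ term crudely by $\tfrac1a(g(s)+1)$ or folding it into $g(w)$. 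Getting the bookkeeping to land on precisely the claimed closed form is the only genuinely fiddly piece; the analytic inputs (monotonicity, \cref{icon3}, \cref{icon4}, MVT) are all routine.
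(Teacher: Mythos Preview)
Your argument for \eqref{bon2} is correct and matches the paper's (one-line) proof.

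For \eqref{bon4}, however, you have misidentified where the iteration lives, and this leads you into an unnecessarily tangled argument that does not clearly terminate with the stated constants. The paper's proof is a \emph{single} two-case split with no telescoping on $w$ at all:
\begin{itemize}
    \item[\emph{Case 1.}] If $s<w\le (ac_3+1)s$, then one simply bounds $g(w)$ outright by $g(s)$ terms. Since $w\le 2^m s$ with $m=m(a)=[\log_2(ac_3+1)]+1$, monotonicity and \eqref{c2} give $g(2^j s)\le 2c_2\,g(2^{j-1}s)+c_2$, and iterating $m$ times yields $g(w)\le (2c_2)^m g(s)+c_2\bigl(1+\cdots+(2c_2)^{m-1}\bigr)\le b(a)(g(s)+1)$. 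The doubling is in the \emph{argument of $g$ starting from $s$}, not a back-off from $w$.
    \item[\emph{Case 2.}] If $w>(ac_3+1)s$, then $w-t>ac_3 s$ for all $t\in[0,s]$, so a \emph{single} application of \eqref{c3} gives
    \[
    g(w)-g(w-s)=\int_0^s g'(w-t)\,dt\le c_3\int_0^s\frac{1+g(w)}{1+(w-t)}\,dt\le \frac{c_3 s(1+g(w))}{ac_3 s}=\frac{1}{a}\bigl(g(w)+1\bigr).
    \]
\end{itemize}
The point you are missing is that the threshold is $(ac_3+1)s$, not $2s$: once $w$ exceeds this, the factor $1/a$ falls out of \eqref{c3} immediately with no iteration, no Gr\"onwall bound, and no tracking of sequences $\lambda_j,\mu_j$. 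Your proposed scheme of ``backing off from $w$ until the coefficient on $g(w)$ shrinks below $1/a$'' conflates the two cases and would not produce the explicit $b(a)$ without substantial reworking; as written it is not a proof.
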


\begin{proof}[Proof of \cref{lem:4.2}]
The inequality \cref{bon2} can be derived immediately from \eqref{c2}.
Meanwhile, \eqref{bon4} remains to be shown. 
For $a>1$, consider two cases.
\begin{description}[leftmargin=0pt]
    \item[{\it Case 1.}]  If $s < w \leq (ac_3 + 1)s$, denote $m := m(a) = [\log_2 (ac_3 + 1)] + 1$.  As $g$ is nondecreasing and by \cref{c2}, we have
        \begin{align*}
            g (w ) & \leq g\bigl( 2^m s \bigr)  \leq 2 c_2 g \bigl(  2^{m-1} s \bigr) + c_2 .
        \end{align*}
        By induction, we have
        \begin{equ}
            g(w) & \leq (2c_2)^m g(s) + c_2 ( 1 + (2c_2) + \cdots + (2c_2)^{m-1} ) \\
                 & \leq b(a) (g(s) + 1),
            \label{l41-01}
        \end{equ}
        where $b(a) =2 c_2 ( 1 + (2c_2) + \cdots + (2c_2)^{m(a) -1} ) + 1/a $.
    \item [{\it Case 2.}] If $w > (ac_3 + 1)s$, by \cref{c3}, we have
        \begin{equ}
            g(w) - g(w - s) & = \int_0^{s} g'(w - t) dt \\
                            & \leq c_3 \int_0^s \frac{ 1 + g(w - t) }{ 1 + (w - t) } dt\\
                            & \leq \frac{1}{a} (g(w) + 1).
            \label{l41-02}
        \end{equ}
\end{description}
By \cref{l41-01,l41-02}, this completes the proof.
\end{proof}

\begin{lemma}\label{lem:4.3}
  For $w\geq 0$ and any $a>0$, we have 
  \begin{equation}\label{eq:g'w}
    g'(w)\leq \frac{1}{ a} g(w) +  c_3(g(a c_3 ) + 1) + 1/a.
  \end{equation}
\end{lemma}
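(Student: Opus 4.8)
\textbf{Proof proposal for Lemma~\ref{lem:4.3}.}

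The plan is to exploit Condition~\ref{icon4}, namely $|g'(w)| \leq c_3(1+|g(w)|)/(1+|w|)$, and split the analysis according to whether $w$ is small or large relative to the free parameter. First I would observe that for $w \geq 0$ we have $1 + |w| \geq 1$, so Condition~\ref{icon4} already gives the crude bound $g'(w) \leq c_3(1 + g(w))$ everywhere on $[0,\infty)$ (using $g(w) \geq 0$ there by \ref{icon1}--\ref{icon2}). The point of the lemma is to trade the factor $c_3 g(w)$ on the right-hand side for a factor $\tfrac1a g(w)$ with an arbitrarily small coefficient, at the cost of an additive constant. The natural threshold is $w^* := a c_3$: for $w \leq a c_3$ one uses monotonicity of $g$, while for $w > a c_3$ one uses the $(1+|w|)$ in the denominator to beat down $c_3$.

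The two cases run as follows. If $0 \leq w \leq a c_3$, then since $g$ is nondecreasing, $g(w) \leq g(a c_3)$, and the crude bound gives $g'(w) \leq c_3(1 + g(w)) \leq c_3(1 + g(ac_3))$, which is dominated by the claimed right-hand side (the $\tfrac1a g(w)$ and $1/a$ terms are just extra slack). If $w > a c_3$, then $1 + |w| > a c_3$, so Condition~\ref{icon4} yields
\begin{equation*}
  g'(w) \leq c_3 \cdot \frac{1 + g(w)}{1 + w} < c_3 \cdot \frac{1 + g(w)}{a c_3} = \frac{1}{a}\bigl(1 + g(w)\bigr) = \frac{1}{a} g(w) + \frac{1}{a},
\end{equation*}
which is again bounded by the claimed expression since $c_3(g(ac_3)+1) \geq 0$. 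Combining the two cases gives \eqref{eq:g'w} for all $w \geq 0$ and all $a > 0$.

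There is essentially no serious obstacle here; the only thing to be careful about is bookkeeping the nonnegativity facts ($g(w) \geq 0$, $g(ac_3) \geq 0$ for $w, a \geq 0$, which follow from \ref{icon1} and \ref{icon2}) so that dropping terms is legitimate, and checking that the threshold $ac_3$ is the right one to make the denominator $1+w$ absorb $c_3$. I would also note in passing that this lemma is the pointwise input needed later (together with \ref{icon3}) to control $g'$ inside the Stein-solution estimates in Propositions~\ref{pro:4.1} and~\ref{pro:4.2}, which is why the additive constant is written in the somewhat baroque form $c_3(g(ac_3)+1)+1/a$ rather than simply absorbed into a generic $C$.
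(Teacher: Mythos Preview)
Your proposal is correct and follows essentially the same argument as the paper: both split into the cases $w\le ac_3$ and $w>ac_3$, using monotonicity of $g$ in the first case and the denominator $1+w>ac_3$ in the second to obtain the two pieces of \eqref{eq:g'w}. The only difference is presentational---you make the intermediate crude bound $g'(w)\le c_3(1+g(w))$ explicit before applying monotonicity, whereas the paper does both steps at once.
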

\begin{proof}[Proof of \cref{lem:4.3}]
  Recall that \eqref{c3} states that for $w\geq 0$,
  \[
  g'(w)\leq c_3\left( \frac{1+g(w) }{ 1 + w} \right).
  \]
  Fix $a>0$. When $w>ac_3$, we have
  \[
  g'(w)\leq \frac{1}{ a} (g(w) + 1).
  \]
  When $w\leq ac_3$, by the monotonicity property of $g$, we have
  \[
  g'(w) \leq c_3(g(a c_3) + 1).
  \]
 This completes the proof.
\end{proof}

For $s>0$, define
\begin{equation}
	\label{Fws}
	f(w,s)=
	\begin{cases}
		e^{G(w)-G(w-s)}-1,&w>s,\\
		e^{G(w)}-1,& 0 \leq w\leq s,\\
		0,& w\leq 0.
	\end{cases}
\end{equation}

We next consider a ratio property of $f(w,s)$.
It is easy to see that $f(w,s)$ is absolutely continuous with respect to both $w$ and $s$, and the partial derivatives are
\begin{equ}
    \frac{\partial }{ \partial w} f(w,s) & = e^{G(w)-G(w-s)}(g(w)-g(w-s))\I(w>s) \\
                                         & \phantom{=} \;  + e^{G(w)}g(w)\I(0\leq w\leq s)
    \label{eq:fpartialw}
\end{equ}
and
\begin{equ}
    \frac{\partial }{ \partial s} f(w,s) = e^{G(w)-G(w-s)} g(w-s)\I(0< s\leq w).
    \label{eq:fpartials}
\end{equ}
\begin{lemma}\label{lemma:1}
    Let $f(w) := f(w,s)$ be defined as in \eqref{Fws}. For  $0 \leq \delta \leq 1$ and $\delta |g(w) | \leq d_1$,
we have
\begin{equation}\label{f:ratio}
    \sup_{|u|\leq \delta} \Big|  \frac{f(w+u)+1}{ f(w)+1} \Big| \I(w + u \geq 0) \leq \mu_2 ,
\end{equation}
where $\mu_2 =\expb{c_2 (d_1 + \mu_1) +  \mu_1 }.$
Moreover, we have
\begin{align}
    \sup_{|u| \leq \delta} |f''(w + u) | \leq \mu_3 (g^2(w) + 1) (f(w) + 1).
    \label{f''}
\end{align}
where $\mu_3 = 2 c_2^2 (c_3 + 1) (\mu_1^2 + 1) \mu_2$.
\end{lemma}
\begin{proof}
    Recall that $\mu_1 = \max(g(1) , |g(-1)|) + 1$.
    When $w + u\geq 0$ and $w \geq 0$, as $g$ is nondecreasing, we have
\begin{align*}
     \sup_{|u| \leq \delta }  \Big|  \frac{f(w+u)+1}{ f(w)+1} \Big|
    & \leq e^{G(w+\delta)-G(w)}\\
    & \leq e^{\delta|g(w+\delta)|} \leq e^{c_2(d_1 + \mu_1) },\nn
\end{align*}
where in the last inequality we use \eqref{bon2}.
When $w + u \geq 0$, $w < 0$ and $|u| \leq \delta$, we have $0 \leq w + u < \delta\leq 1$; hence, by the nondecreasing property of $g$,
\begin{align*}
      \sup_{|u| \leq \delta }  \Big|  \frac{f(w+u)+1}{ f(w)+1} \Big|
     & \leq \sup_{|u|\leq \delta} e^{G(w + u)}   \leq e^{G(\delta)} \leq e^{\mu_1}.
\end{align*}
This proves \cref{f:ratio}.

For $f''(w)$, by \cref{eq:fpartialw},
\begin{align*}
    f''(w) & = e^{G(w) - G(w - s)} \bigl( g(w) - g(w - s)\bigr)^2 \I(w > s) \\
           & \quad + e^{G(w) - G(w - s)} (g'(w) - g'(w - s)) \I(w > s)\\
           & \quad + e^{G(w)} g^2(w) \I(0 \leq w \leq s) \\
           & \quad + e^{G(w)} g'(w) \I(0 \leq w \leq s).
\end{align*}
As $g$ is nondecreasing, we have $g'(w - s) \geq 0$; thus, $g'(w)- g'(w - s) \leq g'(w)$. For $w > s$,
$0 \leq g(w) - g(w - s) \leq g(w)$. Therefore,
\begin{align*}
    f''(w) & \leq \bigl(g'(w) + g^2(w)\bigr) \bigl(f(w) + 1\bigr) \I(w \geq 0).
\end{align*}
By \cref{c3}, for $c_3 > 1$, we have
\begin{align*}
    g^2(w) + g'(w) \leq g^2(w) + c_3 \bigl(1 + g(w)\bigr) \leq 2 (c_3 + 1) \bigl( g^2(w) + 1 \bigr).
\end{align*}
Hence,
\begin{align*}
    f''(w) & \leq 2 (c_3 + 1) (g^2(w) + 1) (f(w) + 1).
\end{align*}
By \cref{bon2} and \cref{f:ratio}, we have
\begin{align*}
    \sup_{|u| \leq \delta} |f''(w + u) | \leq \mu_3 (g^2(w) + 1) (f(w) + 1),
\end{align*}
where $\mu_3 = 2 c_2^2 (c_3 + 1) (\mu_1^2 + 1) \mu_2$.
This completes the proof of \cref{lemma:1}.
\end{proof}

Let $W$ be the random variable defined as in \cref{thm:2.1}. 
For $0 \leq \tau \leq \max(2, \tau_1 + 1, \tau_2)$ and $s > 0$, \cref{lem:4.4,lem:4.5} give the properties of $\E |g(W) |^{\tau}$, $\E |g(W)|^{\tau} e^{G(W)} \I(0 \leq W \leq s) $ and $\E |g(W)|^{\tau} e^{G(W) - G(W - s)}\I(W > s)$, which play a key role in the proofs of \cref{pro:4.1,pro:4.2}.

\begin{lemma}\label{lem:4.4}
    Suppose that \crefrange{icon1}{icon4} and \cref{d0,d1,d2} are satisfied with \(\delta \leq 1\).
    For $0 \leq \tau \leq \max(2, \tau_1 + 1, \tau_2)$, we have
    \begin{align}
        \E |g(W)|^{\tau} \leq C.
        \label{eq:re0}
    \end{align}
    Moreover, for $s>0$,
we have
\begin{equation}
\label{eq:re1}
\E\left(e^{G(W)-G(W-s)}g^{\tau}(W)\I(W>s)\right)\leq C(1+g^{\tau}(s))(\E(f(W,s))+1),
\end{equation}
and
\begin{equation}
\label{eq:re2}
\E\left(e^{G(W)}g^{\tau}(W)\I(0\leq W\leq s)\right)\leq C(1+g^{\tau}(s))(\E(f(W,s))+1).
\end{equation}

\end{lemma}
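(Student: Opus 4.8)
The plan is to treat the three assertions in sequence, using the Stein identity \cref{c1} applied to carefully chosen test functions together with the structural bounds \cref{d0,d1,d2}. The unifying idea is that for a test function $f$ one has
\[
\E\bigl(f(W)g(W)\bigr) = \E\Bigl(\inttd f'(W+t)\hatK(t)\,dt\Bigr) + \E\bigl(f(W)R(W)\bigr),
\]
and if $f' \geq 0$ and $f$ is slowly varying on intervals of length $\delta$, then the middle term is comparable to $\E(\hatK_1 f'(W))$, which by \cref{d0} is at most $d_0\,\E f'(W)$; the remainder term is absorbed using \cref{d2}, which gives $|R(W)| \leq \alpha(|g(W)|+1)$ with $\alpha<1$, so that a term of the form $\E(f(W)|g(W)|)$ on the right can be moved to the left and divided out by $1-\alpha$. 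For \cref{eq:re0}, I would take $f(w)$ to be (a smoothed, or piecewise-linear-then-constant version of) $\mathrm{sgn}(w)|g(w)|^{\tau-1}$ so that $f(w)g(w) \asymp |g(w)|^{\tau}$; then $f'(w)$ is controlled by \cref{c3} and the fact that $\delta|g(W)|\leq d_1$ (from \cref{d1}) keeps the increments of $f$ over a $\delta$-interval bounded, so the right-hand side is $\leq C(1 + \E|g(W)|^{\tau - 1})$ (plus the absorbable remainder), and one closes by induction on $\tau$ starting from the trivial case $\tau = 0$.

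For \cref{eq:re1,eq:re2}, the natural test function is built from $f(w,s)$ as defined in \cref{Fws}, or rather its product with $g^{\tau-1}$: note $\partial_w f(w,s) = g(w) e^{G(w)-G(w-s)} - g(w-s) e^{G(w)-G(w-s)}$ on $w>s$ and $= g(w) e^{G(w)}$ on $0\leq w\leq s$, so $f(\cdot,s)$ is exactly engineered so that the "$f'g$" mechanism produces the integrand $e^{G(W)-G(W-s)} g^\tau(W)$ we want to bound. Concretely I would apply \cref{c1} with $f$ a primitive of (something like) $g^{\tau-1}(w)\,\partial_w f(w,s)$ restricted to $w>0$ and made constant for $w<0$; the left side then produces $\E\bigl(e^{G(W)-G(W-s)}g^\tau(W)\I(W>s)\bigr) + \E\bigl(e^{G(W)}g^\tau(W)\I(0\le W\le s)\bigr)$ up to lower-order pieces, while on the right, $\int_{|t|\le\delta} f'(W+t)\hatK(t)\,dt$ is handled by expanding $f'$, using $\delta|g|\le d_1$ to compare $G(W+t)-G(W+t-s)$ with $G(W)-G(W-s)$ up to a multiplicative constant $e^{O(d_1)}$, bounding the $g^{\tau-1}$ and $g'$ factors via \cref{lem:4.2,lem:4.3} and \eqref{eq:re0} applied to the already-established moment bound — each such factor costs at most $C(1+g^{\tau-1}(s))$ or similar because $g(s)$ enters through the $w\le s$ comparison. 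Finally \cref{d2} lets one absorb the genuinely $\tau$-th-order remainder contribution into the left side. Throughout, the key elementary inequality is \cref{bon4}, which converts $g(w)-g(w-s)$ into $\tfrac1a g(w) + b(a)(g(s)+1)$ and thereby separates the "$w$-dependence" (absorbable) from the "$s$-dependence" (the factor $1+g^\tau(s)$ in the conclusion).

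The main obstacle I anticipate is the bookkeeping in \cref{eq:re1,eq:re2}: one must choose the test function $f$ so that $f'$ is genuinely nonnegative (so the $\hatK$-integral keeps its sign and \cref{d0} applies cleanly), yet $f(w,s)$ has a kink at $w=s$ and at $w=0$, so some smoothing or a limiting argument with absolutely continuous approximants is needed, and one must check that the boundary/overlap terms created near $w=s$ and near $w=0$ are $O(1+g^\tau(s))$ rather than something worse. A secondary delicate point is controlling the shift: replacing $G(W+t)-G(W+t-s)$ by $G(W)-G(W-s)$ requires $|G(W+t)-G(W+t-s) - (G(W)-G(W-s))| \leq \int_{W-s}^{W}|g'(u+t)-g'(u)|\,du$ or a mean-value estimate, which one bounds using \cref{c3} together with $\delta|g(W)|\le d_1$; making sure the resulting exponential factor $e^{C d_1}$ is a genuine constant (not growing with $W$) is where \cref{d1} is essential and must be invoked carefully. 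Once these two technical points are dispatched, the rest is the induction on $\tau$ and routine application of \cref{lem:4.2,lem:4.3}.
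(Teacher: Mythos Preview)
Your approach is essentially the paper's: apply the Stein identity to a test function built from $g^{\tau-1}$ (for \cref{eq:re0}) or $g^{\tau-1}(w)f(w,s)$ (for \cref{eq:re1,eq:re2}), use \cref{d0,d1} and \cref{bon2} to control the $\hatK$-integral after the $\delta$-shift, use \cref{bon4} to split $g(w)-g(w-s)$ into an absorbable $w$-part and an $s$-part, and use \cref{d2} to move the $R$-contribution back to the left with coefficient $\alpha<1$.

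Two small corrections are worth flagging. First, for \cref{eq:re1,eq:re2} the right test function is $g^{\tau-1}(w)f(w,s)$ \emph{itself}, not a primitive of $g^{\tau-1}\partial_w f(\cdot,s)$: with this choice the left side of \cref{c1} is exactly $\E\bigl(g^{\tau}(W)f(W,s)\bigr)$, which dominates both targets, while your primitive would put neither the target nor anything easily comparable to it on the left side. Second, your smoothing worry is unnecessary: $f(\cdot,s)$ is already absolutely continuous (the pieces match at $w=0$ and $w=s$ because $G(0)=0$), so $g^{\tau-1}f(\cdot,s)$ is a legitimate test function for \cref{c1} without modification. The paper also replaces your induction on $\tau$ by a single application of Young's inequality $x^{\tau-1}y\le \frac{\tau-1}{a\tau}x^\tau+\frac{a^{\tau-1}}{\tau}y^\tau$ to push the lower-order powers directly up to $|g(W)|^\tau$ with a small coefficient (then handles $0\le\tau<2$ from the $\tau=2$ case via Cauchy), but your inductive scheme works as well.
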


\begin{proof}[Proof of \cref{lem:4.4}]
In this proof, we always assume that \(\delta \leq 1\).

    We first prove \cref{eq:re0}. Without loss of generality, we consider only the case where $\tau\geq 2$. As $\delta |g(W)| \leq d_1,$ we have $\E |g(W)|^{\tau} < \infty$.
 To bound $\E |g(W)|^{\tau}$, without loss of generality, we consider only $\E g^{\tau}(W) \I(W \geq 0)$. Let $g_+(w) := g(w) \I(w \geq 0)$. As $g(0) = 0$ and $g$ is differentiable, we find that $g_+(w) $ is absolutely continuous. By \cref{c1}, we have
\begin{align}
	 \E \{g^{\tau} (W) \I(W \geq 0)\}  &\phantom{:} = \E \{g(W) \cdot g_+^{\tau- 1} (W)\} \nonumber \\
                                 & := Q_1 + Q_2,
    \label{l2-0g}
\end{align}
where
\begin{align*}
 Q_1 & = (\tau - 1) \E \intud g_+^{\tau - 2} (W + u) g'(W + u) \I(W + u \geq 0) \hatK (u) du,  \\
 Q_2 & = \E R(W) g_+^{\tau - 1} (W) .
\end{align*}
The following inequality is well known:
for any $a> 0, x, y \geq 0$ and $\tau >1$
\begin{align}
    x^{\tau - 1}y \leq \frac{\tau - 1}{a \tau} x^{\tau} + \frac{a^{\tau -1} }{\tau}y^{\tau} .
    \label{l2-01}
\end{align}
For the first term $Q_1$, by \cref{c3}, we have
\begin{align*}
    g'(w + u) \leq c_3 \bigl(1 + |g(w + u)|\bigr).
\end{align*}
Thus, for $w + u \geq 0$,
\begin{align*}
    \ML \sup_{|u| \leq \delta} g_+^{\tau - 2} (w + u) g'(w + u) \\
    & \leq c_3 \sup_{|u| \leq \delta} \bigl( g_+^{\tau - 1}(w + u) + g_+^{\tau - 2}(w + u)\bigr) \\
    & \leq 2 c_3 \sup_{|u| \leq \delta} \bigl( g_+^{\tau - 1} (w + u) + 1\bigr) \\
    & \leq \frac{1 - \alpha}{ 8 \times (2c_2)^{\tau} d_0 (\tau - 1) } \sup_{|u| \leq \delta}  |g(w + u)|^{\tau}  + D_{1, 0},
\end{align*}
where we use \cref{l2-01} with
\begin{align*}
	a = \frac{8 \times (2c_2)^{\tau +1} d_0 (\tau - 1)}{1 - \alpha} \text{ and  } x = |g_+(w + u)|
\end{align*}
in the last inequality. Here and in the sequel,  $D_{1, 0}$, $D_{2,0}$, etc. denote constants depending on $c_2, c_3, d_0, d_1, \mu_1, \alpha$ and $\tau$.
By \cref{bon2}, we have
\begin{align*}
    \sup_{|u| \leq \delta} |g(w + u) |^{\tau } \leq (2c_2)^\tau ( |g(w) |^{\tau} + \mu_1^\tau ).
\end{align*}
Then, by \cref{d0}, we have
\begin{align}
    Q_1 & \leq \frac{1 - \alpha}{8} \E |g(W)|^{\tau} + D_{2, 0}.
    \label{l2-q1}
\end{align}

For $Q_2$, by \cref{d2} and using \cref{l2-01} again, we have
\begin{align}
    Q_2 \leq \alpha \E g_+^{\tau} (W) + \frac{1 - \alpha}{4} \, \E g_{+}^{\tau} (W) +  \Big( \frac{4}{ 1 - \alpha }\Big)^{\tau -1}.
    \label{l2-q2}
\end{align}
Hence, by \cref{l2-0g,l2-q1,l2-q2}, we have
\begin{align*}
    \E g_+^\tau (W) \leq \frac{1}{6} \E|g(W)|^\tau + D_{3,0}.
\end{align*}
Similarly, we have
\begin{align*}
    \E g_{-}^\tau (W) \leq \frac{1}{6} \E |g(W) | ^{\tau} + D_{4,0}.
\end{align*}
Combining the two foregoing inequalities yields \cref{eq:re0}.

 As to \cref{eq:re1,eq:re2}, we first consider the case where $\tau \geq 2$.
  Write $f(w):=f(w,s)$.
By \eqref{c1} and \eqref{eq:fpartialw}, we have
\begin{equ}
     {\E \bigl(g(W)^{\tau} f(W) \bigr)}
	 & = {\E g(W) \bigl\{g(W)^{\tau-1} f(W) \bigr\}} \\
    &= M_1+M_2+M_3+M_4,
    \label{l4.4-0a}
\end{equ}
where
\begin{equ}
  M_1   & = \E \intud g^{\tau } (W + u) e^{G(W + u)} \I(0 \leq W + u\leq s) \hatK (u) du, \\
  M_2   & =\E \intud g^{\tau - 1} (W + u) \bigl( g(W + u) - g(W + u - s)\bigr) \\
     & \qquad \hspace{1.5cm}\times e^{G(W + u) - G(W + u - s)} \I(W + u > s)\hatK (u) du ,\\
  M_3   & = (\tau - 1)\E \intud g^{\tau - 2} (W + u) g'(W + u) f(W + u)  \hatK(u) du,  \\  
 M_4   & = \E R(W) {g}^{\tau - 1} (W) f(W) .
 \label{l4.4-0m4}
\end{equ}
We next give the bounds of $M_1, M_2, M_3$ and $M_4$.
  For $M_1$, by \cref{d0,f:ratio} and noting that $g$ is nondecreasing, we have
        \begin{equ}
            M_1 & \leq d_0 g^{\tau} (s) \E  \sup_{|u| \leq \delta} (f(W + u) + 1) \I(0 \leq W + u \leq s) \\
                & \leq d_0 \mu_2 g^{\tau} (s) \E (f(W ) + 1).
         \label{l4.4-00}
        \end{equ}
  To bound  $M_2$, we first give the bound of $g(w + u)$ and $g(w + u) - g(w + u - s)$ for $|u| \leq \delta$.
        By \cref{bon2}, we have
        \begin{align}
            \sup_{|u| \leq \delta} |g(w + u)| \leq c_2 (|g(w)| + \mu_1).
            \label{l4.4-01}
        \end{align}
		Furthermore, by \cref{bon4} with \(a = 2^{\tau +2 } d_0 \mu_2 c_2^{\tau }/(1 - \alpha)\), for $w + u > s$, there exists a constant $D_1$ depending on $c_2, c_3, d_0, d_1, \mu_1, \alpha$ and $\tau$ such that
        \begin{equ}
            \ML \sup_{|u| \leq \delta} |g(w + u) - g(w + u - s) | \\
            & \leq  \frac{1 - \alpha}{ 2^{\tau + 3} d_0 \mu_2 c_2^\tau  }\sup_{|u| \leq \delta} \lvert   g(w + u) \rvert  + D_1 (g(s) + 1).
            \label{l4.4-02}
        \end{equ}
        By \cref{l2-01,l4.4-01,l4.4-02}, we have
        \begin{align*}
            \ML \sup_{|u|\leq \delta} \bigl\lvert g(W + u)^{\tau - 1} (g(W + u) - g(W + u - s)) \bigr\rvert \\
            & \leq \Bigl(  \frac{1 - \alpha}{ 2^{\tau + 3} d_0 \mu_2 c_2^\tau  }\sup_{|u| \leq \delta} \lvert   g(W + u) \rvert  + D_1 (g(s) + 1)  \Bigr) \sup_{|u| \leq \delta} \lvert   g(W + u) \rvert^{\tau - 1}\\
        & \leq  \frac{1 - \alpha}{ 2^{\tau + 2} d_0 \mu_2 c_2^\tau  }\sup_{|u| \leq \delta} \lvert   g(W + u) \rvert^{\tau } +    \frac{ 2^{\tau + 3} d_0 \mu_2 c_2^\tau  }{\tau(1- \alpha)}     \times D_1^\tau (1 + g(s))^{\tau} \\
        & \leq \frac{1 - \alpha}{4 d_0 \mu_2} \Bigl(|g(W)|^\tau + \mu_1^\tau \Bigr) + \frac{ 2^{\tau + 3} d_0 \mu_2 c_2^\tau  }{\tau(1- \alpha)}     \times D_1^\tau (1 + g(s))^{\tau} \\
        & \leq  \frac{1 - \alpha}{4 d_0 \mu_2}  |g(W)|^\tau + D_2 (1 + g^{\tau}(s)),
     \end{align*}
     where
     \begin{align*}
         D_2 =  \frac{ 2^{2\tau + 3} d_0 \mu_2 c_2^\tau  }{\tau(1- \alpha)}     \times D_1^\tau +  \frac{(1 - \alpha) \mu_1^\tau}{4 d_0 \mu_2} .
     \end{align*}
     By \cref{d0,f:ratio},  we have
     \begin{equ}
         M_2 & \leq \frac{1 - \alpha}{4} \E |g(W)|^{\tau} (f(W ) + 1) \\
             & \qquad + d_0 \mu_2 D_2 (1 + g^{\tau} (s)) \E (f(W) + 1).
         \label{l4.4-04}
     \end{equ}
  For $M_3$, by \cref{lem:4.3} and similar to \cref{l4.4-04}, we have
     \begin{equ}
         M_3 & \leq \frac{ 1 - \alpha }{4} \E |g(W)|^{\tau} (f(W) + 1)  \\
         & \qquad + D_3 ( 1 + g^\tau (s) ) \E (f(W) + 1),
         \label{l4.4-05}
     \end{equ}
     where $D_3$ is a finite constant depending on $c_2, c_3, d_0, d_1, \mu_1, \alpha$ and $\tau$.

 For $M_4$, by \cref{d2,l2-01}, we have
     \begin{equ}
         M_4 & \leq \alpha \E |g(W)|^{\tau} f(W) + \alpha \E |g(W)|^{\tau - 1} f(W) \\
             & \leq \Bigl(\alpha + \frac{1 - \alpha}{4}\Bigr) \E |g(W)|^\tau f(W) + \Big(\frac{4 \alpha}{ 1 - \alpha }\Big)^{\tau -1} \E f(W).
         \label{l4.4-06}
     \end{equ}
By \cref{l4.4-0a,l4.4-00,l4.4-04,l4.4-05,l4.4-06}, we have
\begin{align*}
    \E |g(W)|^{\tau} f(W) & \leq \biggl( \alpha + \frac{3 (1 - \alpha) }{4}\biggr) \E |g(W)|^{\tau} f(W)   \\
                          & \qquad + (D_4 + \E|g(W)|^\tau)  (1 + g^\tau(s)) \E (f(W ) + 1),
\end{align*}
where $D_4$ is a constant depending on $c_2, c_3, d_0, d_1, \mu_1, \alpha$ and $\tau$. Rearranging the inequality gives
\begin{align}
    \label{l4.4-07}
    \E|g(W)|^{\tau} f(W) \leq \frac{4 (D_4 + \E|g(W)|^\tau)}{ 1 - \alpha }  (1 + g^\tau(s)) \E (f(W ) + 1).
\end{align}

Combining  \cref{l4.4-07,eq:re0}, we have
\begin{align}
    \label{l4.4-08}
    \E |g(W)|^{\tau} (f(W) + 1)
                         \leq D_5 (1 + g^\tau(s)) \E (f(W ) + 1),
\end{align}
where $D_5$ is a constant depending on $c_2, c_3, d_0, d_1, \mu_1, \alpha$ and $\tau$.
This proves \cref{eq:re1,eq:re2} for $\tau \geq 2$.

For $0\leq \tau < 2$ with $\E|g(W)|^2 <\infty.$ By the Cauchy inequality, we have
\begin{equation*} 
    (1 + g^{2-\tau}(s) )  |g(w)|^{\tau}\leq 1 +  g^2(s)+2 g^2(w),
\end{equation*}
and noting that for $s > 0$ and $g(s) > 0$,
\begin{equ}
    |g(w)|^\tau
	& \leq \frac{ 1 +  g^2(s)+2 g^2(w)}{1 + g^{2-\tau} (s)} \\
	& \leq g^{\tau }(s) + \frac{1 + 2g^2(w)}{ 1 + g^{2 - \tau} (s) }.
    \label{l4.4-10}
\end{equ}
By \cref{l4.4-08} with $\tau = 2$, we have
\begin{align}
    \label{l4.4-09}
    \E |g(W)|^{2} (f(W) + 1)
         \leq D_6 (1 + g^2(s)) \E (f(W ) + 1),
\end{align}
where $D_6$ is a constant depending on $c_2, c_3, d_0, d_1, \mu_1, \alpha$ and $\tau$.

Thus, for $0 \leq \tau < 2$, by \cref{l4.4-09,l4.4-10}, we have
\begin{align*}
    \E | g(W)|^{\tau} ( f(W) + 1 ) & \leq g^{\tau } (s) \E (f(W) + 1) \\
                                   & \quad + \frac{\E(f(W) + 1)  + 2 \E g^2(W) (f(W) + 1)}{ 1 + g^{2 - \tau}(s) } \\
                                   & \leq D_7 (1 + g^\tau(s) ) \E (f(W) + 1),
\end{align*}
where $D_7$ is a constant depending on $c_2, c_3, d_0, d_1, \mu_1, \alpha$ and $\tau$.
This completes the proof together with \cref{l4.4-08}. 
\end{proof}

\begin{lemma}\label{lem:4.5}
Let \(f(w, s)\) be defined as in \cref{Fws}.
Let $ 0 < \delta \leq 1$ and $s>0$.
Suppose that the conditions in \cref{thm:2.1} are satisfied.
Then, we have
\begin{eqnarray}
\lefteqn{
    \E (f(W, s) + 1) } \nonumber \\
     & \leq & C (1 + s) \exp \Bigl\{ C \Bigl(\delta \bigl(1 + s g^2(s) \bigr)+  \delta_1 \bigl(1 + s g^{\tau_1 + 1} (s)\bigr)   \\
     && \quad \quad \quad \quad \quad \quad \quad  \quad \  +  \delta_2 \bigl(1 + s g^{\tau_2}(s)\bigr) \Bigr) \Bigr\}. \nonumber
     \label{l4.5a}
\end{eqnarray}
\end{lemma}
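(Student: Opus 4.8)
The plan is to bound $\E(f(W,s)+1)$ by applying the Stein identity \cref{c1} to a cleverly chosen test function, namely (a smoothed/truncated version of) the function $\zeta(W,s)$ from \cref{r2.1}, and then to run a Gronwall-type argument in the variable $s$. Observe that $f(W,s) = \zeta(W,s) - 1$ on $\{W>0\}$, and that the key algebraic identity underlying the whole paper, $\E e^{G(Y)-G(Y-t)} = 1$, suggests comparing $\E\zeta(W,s)$ with its ``correct'' value. First I would take a function $f$ in \cref{c1} whose derivative is essentially $g(w+t)e^{G(w)-G(w-s)}\I(w>s)$ (plus the boundary pieces), so that the left side $\E(f(W)g(W))$ reproduces, up to error terms controlled by $R(W)$ via \cref{de2,d2}, the quantity we want, while the right side $\E\int_{|t|\le\delta} f'(W+t)\hatK(t)\,dt$ produces $\E(\hatK_1 \cdot (\text{derivative term}))$ plus the increments $f'(W+t)-f'(W)$ which are $O(\delta)$-type corrections handled by \cref{lem:4.2,lem:4.3} and the moment bound \cref{eq:re0} once it is available for the relevant $\tau$.

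The execution proceeds in these steps. (1) Fix $s>0$ and define $m(s) := \E(f(W,s)+1)$. (2) Plug the test function above into \cref{c1}; on the left, use $g(W) = $ (the relevant expression) together with $|R(W)|\le \alpha(|g(W)|+1)$ from \cref{d2} to absorb the $R$-contribution into a fraction $\alpha<1$ of the main term plus lower-order pieces controlled by $\delta_2$. (3) On the right, write $\hatK(t)$-integral $= \E(\hatK_1 \cdot h(W)) + \E\int_{|t|\le\delta}(h(W+t)-h(W))\hatK(t)\,dt$ where $h$ is the integrand's derivative; replace $\E(\hatK_1\cdot h(W))$ by $\E h(W)$ at the cost of $|\E(\hatK_1|W)-1|\le\delta_1(|g(W)|^{\tau_1}+1)$ from \cref{de1}, and bound the increment term using \cref{bon2,bon4,eq:g'w} so it becomes $\le C\delta(1 + sg^2(s) + \cdots) m(s)$ plus harmless constants. (4) Here the crucial cancellation occurs: $\E h(W)$ together with the main left-side term should collapse (via the defining identity $e^{G(w)-G(w-t)}$ integrating correctly against $p$) so that what survives is a linear inequality $m(s) \le C(1+s) + C\bigl(\delta(1+sg^2(s)) + \delta_1(1+sg^{\tau_1+1}(s)) + \delta_2(1+sg^{\tau_2}(s))\bigr)\, m(s)$, after using $\E|g(W)|^\tau\le C$ for the constant pieces. (5) Under the standing smallness hypothesis $\delta sg^2(s)+\delta_1 sg^{\tau_1+1}(s)+\delta_2 sg^{\tau_2}(s)\le 1$ one cannot yet divide, so instead I would run the bound differentially in $s$: show $m'(s) \le \bigl(g(s)+C\delta(\cdots)\bigr) m(s) + C$ and integrate, or partition $[0,s]$ into $O(1)$ subintervals on each of which the multiplicative factor is $\le 1/2$, iterating to get the stated $(1+s)\exp\{C(\cdots)\}$ form.

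The main obstacle I anticipate is step (4): making the cancellation between the left-hand side of the Stein identity and the $\E h(W)$ term on the right precise, because the test function cannot literally be $\zeta(W,s)$ — that function is not smooth at $w=0$ and $w=s$, and its derivative grows like $g(w)e^{G(w)-G(w-s)}$, which is only integrable after one already controls $m(s)$. So there is a mild circularity that must be broken by first working with a truncation $f_N$ (replacing $\zeta$ by $\zeta\wedge N$ or by restricting to $\{W\le N\}$), deriving the inequality with $N$-dependent but uniformly bounded constants, and letting $N\to\infty$ by monotone convergence — the finiteness needed to justify this is exactly where \cref{d0,d1} (boundedness of $\E(\hatK_1|W)$ and of $\delta|g(W)|$) and the concentration-type consequence of \cref{d2} enter. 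A secondary technical point is bookkeeping the three boundary regimes $w>s$, $0\le w\le s$, $w\le 0$ in \cref{Fws} consistently through the increment estimates, since $f(w,s)$ and its surrogate derivative are only piecewise defined; I would handle the crossover regions $w\in(s-\delta,s+\delta)$ and $w\in(-\delta,\delta)$ separately, bounding their contribution crudely by $C\delta\cdot(\text{local value})$, which is absorbed into the $\delta(1+sg^2(s))$ term.
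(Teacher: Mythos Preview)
Your step (5) fallback is the right mechanism, but the differential inequality you wrote there is too weak, and your primary route (steps 1–4) does not land where you claim.

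The paper differentiates in $s$ first. With $h(s)=\E f(W,s)$ and using \cref{eq:fpartials}, one has
\[
h'(s)=\E\bigl[g(W-s)\,e^{G(W)-G(W-s)}\I(W>s)\bigr]
     =\E\bigl(f(W)g(W)\bigr)+\E\bigl(g(W)\I(W>0)\bigr)-\E f'(W),
\]
where $f(\cdot)=f(\cdot,s)$ and the algebraic identity is checked pointwise from \cref{Fws}. Now apply the Stein identity \cref{c1} with this $f$ as the test function (not a primitive of $g(w)\zeta(w,s)$ as you suggested). This yields
\[
\E(f(W)g(W))-\E f'(W)=T_1+T_2+T_3,
\]
with $T_1=\E\int_{|u|\le\delta}(f'(W+u)-f'(W))\hatK(u)\,du$, $T_2=\E f'(W)(\E(\hatK_1|W)-1)$, $T_3=\E f(W)R(W)$. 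Each $T_i$ is bounded, via \cref{lem:4.4} and the second–derivative bound in \cref{lemma:1}, by $C\bigl(\delta(1+g^2(s))+\delta_1(1+g^{\tau_1+1}(s))+\delta_2(1+g^{\tau_2}(s))\bigr)(h(s)+1)$. Together with $\E g(W)\I(W>0)\le C$ from \cref{eq:re0}, this gives
\[
h'(s)\le C+C\Bigl(\delta(1+g^2(s))+\delta_1(1+g^{\tau_1+1}(s))+\delta_2(1+g^{\tau_2}(s))\Bigr)(h(s)+1),
\]
and Gronwall produces the stated $(1+s)\exp\{C(\cdots)\}$ bound.

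Two concrete gaps in your plan. First, your step (4) asserts the cancellation yields $m(s)\le C(1+s)+C(\ldots)m(s)$ directly; but there is no mechanism that produces the additive $(1+s)$ without integrating in $s$ — the cancellation controls $h'(s)$, not $h(s)$. Second, your step (5) inequality $m'(s)\le (g(s)+C\delta(\cdots))m(s)+C$ has an extraneous $g(s)$ prefactor; integrating that would give $\exp(G(s))$, which is far too large. The whole point is that after the Stein identity is applied, the differential inequality carries \emph{only} the small $\delta,\delta_1,\delta_2$ coefficients. Finally, your ``main obstacle'' paragraph about smoothness and truncation is a red herring: $f(\cdot,s)$ is absolutely continuous (it matches continuously at $w=0$ and $w=s$), and the a priori bound $\delta|g(W)|\le d_1$ from \cref{d1} makes $f(W,s)\le e^{sd_1/\delta}$ bounded, so no truncation is needed.
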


\begin{remark}
\label{rem:a4.5}
Following the proof of \cref{lem:4.5}, if we assume that the condition \cref{de1} is replaced by \cref{r2.1-b,r2.1-c}, then the result of \cref{lem:4.5} still holds.
\end{remark}
\begin{proof}[Proof of \cref{lem:4.5}]
    Let $h(s)=\E f(W,s)$ and let $f(w) := f(w , s)$. By \cref{eq:fpartialw,eq:fpartials}, for $s>0$, we have
\begin{align*}
h'(s)={}&\E\left(e^{G(W)-G(W-s)}g(W-s)\I(W>s)\right)\nn\\
={}& \E(f(W)g(W))+E(g(W)\I(W>0))-E(f'(W)). \nn
\end{align*}

 We first show that $h'(s)$ can be bounded by
a function of $h(s)$. We then solve the differential inequality to obtain
the bound of $h(s)$, using an idea similar to that in the proof of \cref{lem:4.4}.

By \eqref{c1}, we have
\begin{equ}
    {\E(f(W)g(W))} - \E (f'(W))
={} & T_1 + T_2 + T_3,
\label{l4.5-01}
\end{equ}
where
\begin{align*}
    T_1 & = \E\Bigl(\intud \bigl(f'(W+u) - f'(W) \bigr)\hatK (u)du\Bigr), \\
    T_2 & = \E f'(W) ( \E(\hatK_1 |W) - 1) , \\
    T_3 & = \E(f(W)R(W)). \nn
\end{align*}
We next give the bounds of $T_1, T_2$ and $T_3$.
\begin{enumerate}[label*=\roman*).]
    \item The bound of $T_1$.
        By \cref{f''}, we have
        \begin{align*}
            & \sup_{|u|\leq \delta} |f'(w + u) - f'(w)| \\
            & \leq \delta \sup_{|u|\leq \delta} |f''(w + u) | \\
            & \leq \delta \mu_3 (g^2(w) + 1) (f(w) + 1).
        \end{align*}
        By \cref{d0} and \cref{lem:4.4}, we have
        \begin{equ}
            |T_1| & \leq \delta d_0 \mu_3 \E ( g^2(W) + 1 ) (f(W) + 1) \\
                  & \leq D_8 \delta (1 + g^2(s)) \E (f(W) + 1),
            \label{l4.5-02}
        \end{equ}
        where $D_8$ is a constant depending on  $c_2, c_3, d_0, d_1, \mu_1$ and  $\alpha$.
    \item The bound of $T_2$.
        By \cref{de1,lem:4.4}, we have
        \begin{equ}
            |T_2 | & \leq \delta_1 \E \Bigl( |g(W)| \bigl( |g(W)|^{\tau_1} + 1\bigr) (f(W) + 1) \Bigr) \\
                   & \leq 2 \delta_1 \E \bigl( |g(W)|^{\tau_1 + 1} + 1\bigr) \bigl( f(W) + 1\bigr) \\
                   & \leq D_9 \delta_1 (1 + g^{\tau_1 + 1} (s) ) \E \bigl( f(W) + 1\bigr),
            \label{l4.5-03}
        \end{equ}
        where $D_9$ is a constant depending on  $c_2, c_3, d_0, d_1, \mu_1, \tau_1$ and  $\alpha$.
    \item The bound of $T_3$. By \cref{de2,lem:4.4}, we have
        \begin{equ}
            \label{l4.5-04}
            T_3 & \leq \delta_2  \E ( |g(W)|^{\tau_2}  + 1) f(W) \\ 
                & \leq D_{10} \delta_2 (1 + g^{\tau_2}(s)) \E \bigl( f(W) + 1\bigr),
        \end{equ}
        where $D_{10}$ is a constant depending on  $c_2, c_3, d_0, d_1, \mu_1, \tau_2$ and  $\alpha$.
\end{enumerate}
By \cref{eq:re0}, we have
\begin{align}
    \E g(W) \I(W > 0) \leq D_{11},
    \label{l4.5-05}
\end{align}
where $D_{11}$ is a constant depending on  $c_2, c_3, d_0, d_1, \mu_1$ and  $\alpha$.
By \cref{l4.5-01,l4.5-02,l4.5-03,l4.5-04,l4.5-05}, we have
\begin{align*}
    h'(s) & \leq D_{11} + D_{12} \bigl(\delta \bigl(1 + g^2(s) \bigr) +  \delta_1 \bigl(1 + g^{\tau_1 + 1} (s)\bigr) +  \delta_2 \bigl(1 + g^{\tau_2}(s)\bigr) \bigr) \\
          & \hspace{2cm} \times \E \bigl(f(W) + 1\bigr) ,
\end{align*}
where $D_{12} = \max \bigl( D_{8}, D_{9}, D_{10}\bigr)$. Therefore,
\begin{align*}
    h'(s) & \leq D_{12} \bigl(\delta \bigl(1 + g^2(s) \bigr) +  \delta_1 \bigl(1 + g^{\tau_1 + 1} (s)\bigr) +  \delta_2 \bigl(1 + g^{\tau_2}(s)\bigr) \bigr) h(s) \\
          & \quad + D_{11} + D_{12} \bigl(\delta \bigl(1 + g^2(s) \bigr) +  \delta_1 \bigl(1 + g^{\tau_1 + 1} (s)\bigr) +  \delta_2 \bigl(1 + g^{\tau_2}(s)\bigr) \bigr),
\end{align*}
By solving the differential inequality and given that
$s + sg^\tau (s) \leq 1+ ( 1+ g^{-\tau}(1) ) s g^\tau(s)$ for $ \tau>0$ and $s\geq 0$,  we have
\begin{align*}
    \E (f(W) + 1) & \leq C_1 (1 + s) \exp \Bigl\{ C_2 \Bigl(\delta \bigl(1 + s g^2(s) \bigr) +  \delta_1 \bigl(1 + s g^{\tau_1 + 1} (s)\bigr) \\
                  & \hspace{4cm} +  \delta_2 \bigl(1 + s g^{\tau_2}(s)\bigr) \Bigr) \Bigr\},
\end{align*}
where $C_1$ and $C_2$ are constants depending on  $c_2, c_3, d_0, d_1, \mu_1, \tau_1, \tau_2$ and  $\alpha$.
This completes the proof.
\end{proof}

The next lemma gives the properties of the Stein solution.

\begin{lemma}\label{lem:4.6}
Let $f_z$ be the solution to Stein's equation {\eqref{ste:eq}}. 
Then, for $z\geq 0$,
\begin{gather}\label{fg}
|f_z(w)g(w)|\leq \begin{cases}
1-F(z),& w\leq 0,\\
F(z),& w>0,
\end{cases} \\
\label{fnorm}
0 \leq f_z(w)\leq \begin{cases}
(1-F(z))/c_1, & w\leq 0,\\
F(z)/c_1, & w>0,
\end{cases}
\intertext{and}
\label{f'norm}
|f_z'(w)|\leq \begin{cases}
2(1-F(z)),& w\leq 0,\\
1,& 0<w\leq z,\\
2F(z),& w>z.
\end{cases}
\end{gather}
\end{lemma}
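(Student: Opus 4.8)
\textbf{Proof proposal for \cref{lem:4.6}.}

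The plan is to read off all three bounds directly from the explicit formula \cref{eq:solution} for $f_z$, combined with the estimates on $(1-F(w))/p(w)$ and $F(w)/p(w)$ provided by \cref{lem:4.1}. First I would establish \cref{fg}. For $w \le 0$, Stein's equation \cref{ste:eq} gives $f_z(w)g(w) = f_z'(w) - \I(w\le z) + F(z)$; a cleaner route is to use \cref{eq:solution} directly. For $0 < w \le z$ we have $f_z(w)g(w) = g(w) F(w)(1-F(z))/p(w)$, and since $g(w)F(w)/p(w) \le g(w)\cdot\min\{1/g(w),1/c_1\} \le 1$ by the (symmetric analogue for the left tail, applied to $F(w)$ rather than $1-F(w)$) of \cref{wg<0}—here one uses that $g(w)F(w)/p(w)\le 1$ follows from $F(w) = \int_{-\infty}^w p \le \int_{-\infty}^w (g(w)/g(t))p(t)\,\dd t$ is not quite right since $g$ changes sign; instead note $g(w)F(w)/p(w)$ is bounded by integrating $g(w)e^{-G(t)}$ against... — the honest statement is that for $w>0$, $\,g(w)F(w)\le g(w)(F(0) + \int_0^w p(t)\dd t)$ and $\int_0^w g(w)e^{-G(t)}\dd t \le \int_0^w g(t)e^{G(w)-G(t)}e^{-G(w)}\dd t\cdot$(monotonicity)$\le e^{-G(w)} = p(w)/c_1$ when combined with $F(0)\le 1/2$ by symmetry is not assumed — so in fact one uses the bound $g(w)F(w)/p(w) \le C$ with $C$ depending on $\max(g(1),|g(-1)|)$, exactly as the $O(1)$ constant in the theorem is allowed to. Thus $|f_z(w)g(w)| \le (1-F(z))$ times a constant; but the lemma claims the clean bound $F(z)$ for $w>0$, which suggests the intended argument is simply $f_z(w)g(w) = g(w)F(w)(1-F(z))/p(w) \le F(z)$ once one proves $g(w)F(w)/p(w)\le F(z)/(1-F(z))$... — I will instead follow the cleanest path: for $w>z$, $f_z(w)g(w) = F(z)\,g(w)(1-F(w))/p(w) \le F(z)\cdot 1$ by the right inequality in \cref{wg0}; for $0<w\le z$, $f_z(w)g(w) = (1-F(z))g(w)F(w)/p(w)$ and one shows $g(w)F(w)/p(w)\le F(z)/(1-F(z))$ using monotonicity of $w\mapsto g(w)F(w)/p(w)$ on $(0,z]$ together with its value comparison at $w=z$; for $w\le 0$, $f_z(w)g(w) = (1-F(z))g(w)F(w)/p(w)$ with $g(w)\le 0$, $F(w)\ge 0$, $p(w)>0$, and $|g(w)|F(w)/p(w)\le 1$ by \cref{wg<0}, giving $|f_z(w)g(w)|\le 1-F(z)$.

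Next I would prove \cref{fnorm}. For $w\le z$, $f_z(w) = F(w)(1-F(z))/p(w)$; since $F(w)/p(w) \le 1/c_1$ (for $w\le 0$ this is \cref{wg<0}, and for $0<w\le z$ one checks $F(w)/p(w) = (F(0) + \int_0^w p)/p(w) \le$ a constant over $c_1$ by the same monotonicity computation as above, noting $p(w)$ is decreasing on $(0,\infty)$ so $F(w)/p(w)$ is increasing and its limiting behaviour is controlled), we get $f_z(w)\le (1-F(z))/c_1$. For $w>z$, $f_z(w) = F(z)(1-F(w))/p(w) \le F(z)/c_1$ by the upper bound $1/c_1$ in \cref{wg0}. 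This handles all cases with the constant $c_1$ as stated (the $w>0$ sub-case $0<w\le z$ being absorbed into the ``$w>0$'' branch since $F(z)\le$ the relevant quantity only after the monotonicity comparison — alternatively one states \cref{fnorm} with $1/c_1$ replaced by a constant, which is all the theorem needs).

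Finally, \cref{f'norm} follows by differentiating: from Stein's equation, $f_z'(w) = f_z(w)g(w) + \I(w\le z) - F(z)$. For $w\le 0$: $|f_z'(w)|\le |f_z(w)g(w)| + |1 - F(z)| = (1-F(z)) + (1-F(z)) = 2(1-F(z))$ using \cref{fg} and $\I(w\le z)=1$. For $0<w\le z$: $f_z'(w) = f_z(w)g(w) + 1 - F(z)$; here $f_z(w)g(w)\ge 0$ and $f_z(w)g(w)\le F(z)$ (by \cref{fg}), so $0\le f_z'(w)\le F(z) + 1 - F(z) = 1$. For $w>z$: $|f_z'(w)| \le |f_z(w)g(w)| + |0 - F(z)| \le F(z) + F(z) = 2F(z)$. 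This completes the proof.

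The main obstacle is purely bookkeeping: the inequalities in \cref{lem:4.1} are stated only for $w>0$ and $w<0$ separately and only for the tails $1-F(w)$, $F(w)$ respectively, whereas for \cref{fg,fnorm} on the region $0<w\le z$ one needs to bound $g(w)F(w)/p(w)$ and $F(w)/p(w)$, which are not literally covered. The fix is to observe that $p$ is decreasing on $(0,\infty)$ (so $F(w)/p(w)$ is increasing there) and to control the left endpoint via symmetry-free constants depending on $\max(g(1),|g(-1)|)$ exactly as in the statement of \cref{thm:2.1}; I expect this to be a short paragraph rather than a genuine difficulty. Everything else is a direct substitution into \cref{eq:solution} and \cref{ste:eq}.
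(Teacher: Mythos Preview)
Your treatment of \cref{f'norm} and of the cases $w\le 0$ and $w>z$ for \cref{fg}, \cref{fnorm} is exactly the paper's argument: direct substitution into \cref{eq:solution} and an appeal to \cref{wg0}, \cref{wg<0}. The only place you struggle is the middle region $0<w\le z$, and there you are making it far harder than necessary. Your eventual monotonicity route (that $w\mapsto g(w)F(w)/p(w)$ and $w\mapsto F(w)/p(w)$ are increasing on $(0,\infty)$, so one evaluates at $w=z$ and then invokes \cref{wg0}) is correct once written out cleanly, but the exposition is tangled and in places misleading: for instance ``its limiting behaviour is controlled'' is false, since $F(w)/p(w)\to\infty$ as $w\to\infty$; what you actually need is only its value at the right endpoint $w=z$, not any limiting or left-endpoint control, and no dependence on $\max(g(1),|g(-1)|)$ enters.

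The paper's trick is a one-liner and avoids any derivative computation. For $0<w\le z$ one has $1-F(z)\le 1-F(w)$, so
\[
|f_z(w)g(w)| = \frac{F(w)\,g(w)\,(1-F(z))}{p(w)} \;\le\; \frac{F(w)\,g(w)\,(1-F(w))}{p(w)} \;\le\; F(w)\;\le\; F(z),
\]
the second inequality being exactly $g(w)(1-F(w))/p(w)\le 1$ from \cref{wg0}. The same swap $1-F(z)\le 1-F(w)$ handles \cref{fnorm} in this region: $f_z(w)\le F(w)(1-F(w))/p(w)\le F(z)/c_1$, using $(1-F(w))/p(w)\le 1/c_1$ from \cref{wg0} and $F(w)\le F(z)$. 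This single observation --- replace the awkward factor $1-F(z)$ by $1-F(w)$ so that \cref{lem:4.1} applies directly --- is what all of your backtracking about bounding $g(w)F(w)/p(w)$ and $F(w)/p(w)$ on $(0,z]$ was groping for.
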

\begin{proof}[Proof of \cref{lem:4.6}]

Our first step is to prove \eqref{fg}. By \eqref{eq:solution}, we have
\begin{equation}
f_z(w)g(w)=\begin{dcases}
    \frac{F(w)g(w)(1-F(z))}{ p(w)},& w\leq z,\\[5mm]
    \frac{F(z)g(w)(1-F(w))}{ p(w)},& w>z.
\end{dcases}
\end{equation}
Without loss of generality, we must consider only three case when $z>0$:
\begin{enumerate}
\item $w < 0$: By \eqref{wg<0},
\[
|f_z(w)g(w)|\leq 1-F(z).
\]
\item $0\leq w\leq z$: Since $w\leq z,$ $1-F(z)\leq 1-F(w),$ thus by \eqref{wg0},
\[
|f_z(w)g(w)|\leq \frac{F(w)|g(w)|(1-F(w))}{ p(w)}\leq F(w)\leq F(z).
\]
\item $w>z$: By \eqref{wg0},
\[
|f_z(w)g(w)|\leq F(z).
\]
\end{enumerate}
We can have a similar argument when $z\leq 0,$ which completes the proof of \eqref{fg}.
Additionally, \eqref{fnorm} can be shown similarly.
\eqref{f'norm} follows directly from \eqref{ste:eq} and \eqref{fg}.
\end{proof}

\begin{lemma} \label{lem:4.7}
For $z>0$ and  $0 \leq \tau \leq \max(2, \tau_1+1, \tau_2)$,  
\begin{equation}\label{eq:bod}
\E(f_z(W)|g(W)|^{\tau})\leq C(1+zg^{\tau}(z))(1-F(z)),
\end{equation}
provided that $\max(\delta , \delta_1, \delta_2) \leq 1$ and $\delta zg^2(z)+\delta_1 zg^{\tau_1+1}(z)+ \delta_2 zg^{\tau_2}(z) \leq 1.$
\end{lemma}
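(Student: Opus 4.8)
The plan is to bound $\E(f_z(W)|g(W)|^\tau)$ by splitting the expectation according to the sign of $W$ relative to $z$, using the explicit form of $f_z$ in \cref{eq:solution} together with the ratio estimates from \cref{lem:4.1}. On $\{W \le 0\}$ we have $f_z(W) = F(W)(1-F(z))/p(W)$, and $F(w)/p(w) \le \min\{1/|g(w)|, 1/c_1\}$ by \cref{wg<0}, so the contribution is at most $C(1-F(z))\,\E(\min\{|g(W)|^{\tau-1},|g(W)|^\tau\}\I(W\le 0))$, which is $O((1-F(z)))$ by the uniform moment bound \cref{eq:re0} of \cref{lem:4.4}. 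On $\{0 < W \le z\}$ we again use $f_z(W) = F(W)(1-F(z))/p(W) \le (1-F(z))/c_1$ from \cref{fnorm}, and on this range $|g(W)|^\tau \le g^\tau(z)$ by monotonicity, so one has to control $(1-F(z))g^\tau(z)\,\P(0<W\le z)$; the point is that $\P(0 < W \le z)$ is comparable (up to the exponential factor from \cref{lem:4.5}, which is bounded by a constant under the hypothesis $\delta zg^2(z)+\delta_1 zg^{\tau_1+1}(z)+\delta_2 zg^{\tau_2}(z)\le 1$) to $\E e^{G(W)}\I(0\le W\le z)$, and then \cref{eq:re2} of \cref{lem:4.4} gives the bound $C(1+g^\tau(z))(\E f(W,z)+1)$, which combined with \cref{lem:4.5} yields $C(1+zg^\tau(z))$. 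Multiplying by $(1-F(z))$ would not quite be right here, so instead I would keep $f_z(W) \le (1-F(z))/c_1$ and rewrite $\P(0<W\le z) = \E\,\I(0<W\le z)$, then use that on this set $e^{-G(W)} \ge e^{-G(z)} = p(z)/c_1$, hence $\I(0<W\le z) \le c_1^{-1} e^{G(W)}p(z)\I(0<W\le z)$, and $p(z) \le \max(1,c_3)(1+g(z))(1-F(z))$ by \cref{wg0}; but then the $(1-F(z))$ is already extracted, so this double-counts.

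Let me restructure: the cleanest route treats $\{W \le z\}$ all at once. There $f_z(W) \le (1-F(z))/c_1$ and $f_z(W)|g(W)|^\tau \le (1-F(z))c_1^{-1}|g(W)|^\tau$, but that loses the $z$-dependence we want to display as $zg^\tau(z)$ rather than just a constant — which is fine, since $C \le C(1+zg^\tau(z))$. So for $W \le z$, $\E(f_z(W)|g(W)|^\tau\I(W\le z)) \le C(1-F(z))\E|g(W)|^\tau \le C(1-F(z))$ by \cref{eq:re0}, which is dominated by the claimed bound. The real work is the tail $\{W > z\}$. There $f_z(W) = F(z)(1-F(W))/p(W) \le F(z)\min\{1/g(W), 1/c_1\} \le \min\{1/g(W),1/c_1\}$ by \cref{wg0}, so the contribution is at most $\E(\min\{|g(W)|^{\tau-1}, |g(W)|^\tau\}\I(W>z))$. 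To extract the factor $(1-F(z))$ and the polynomial $1+zg^\tau(z)$, I would write $\I(W>z) \le e^{G(W)-G(z)}\I(W>z)$ is wrong in sign; instead use $\I(W > z) = e^{G(W)-G(W)}\I(W>z)$ and insert $1 = e^{-G(z)}e^{G(z)}$ to get $\I(W>z) \le e^{-(G(W)-G(z))}\cdot e^{G(W)-G(z)}\I(W>z)$ — trivial. The genuinely useful inequality is $e^{-G(z)} \le$ something times $(1-F(z))$: indeed $1-F(z) = c_1\int_z^\infty e^{-G(t)}dt \ge c_1 e^{-G(z)}\int_z^{z+1}e^{-(G(t)-G(z))}dt$, and on $[z,z+1]$, $G(t)-G(z) \le g(z+1) \le c_2(g(z)+\mu_1)$ by \cref{bon2}; under $\delta zg^2(z) \le 1$ with $z$ large this is controlled, giving $e^{-G(z)} \le C(1-F(z))e^{C g(z)}$, and the exponential is absorbed using the hypothesis. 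Then $\E(|g(W)|^\tau \I(W>z)) \le C(1-F(z))e^{Cg(z)}\E(|g(W)|^\tau e^{G(W)-G(z)}\I(W>z))$, and taking $s = z$ in \cref{eq:re1} bounds $\E(|g(W)|^\tau e^{G(W)-G(W-z)}\I(W>z))$ — but note $G(W)-G(z) \le G(W)-G(W-z)$ only when $W-z \le z$, i.e. $W \le 2z$, so the regions $\{z < W \le 2z\}$ and $\{W > 2z\}$ should be handled separately, the latter using the faster decay of $e^{-G}$ and the former using $G(W)-G(z) \le G(W) - G(W-z)$ directly.

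The main obstacle I anticipate is bookkeeping the exponential factors: each application of \cref{lem:4.1,lem:4.4,lem:4.5} introduces a factor of the form $e^{C(\delta(1+zg^2(z)) + \delta_1(1+zg^{\tau_1+1}(z)) + \delta_2(1+zg^{\tau_2}(z)))}$, and one must verify that under the standing assumption $\delta zg^2(z) + \delta_1 zg^{\tau_1+1}(z) + \delta_2 zg^{\tau_2}(z) \le 1$ (together with $\max(\delta,\delta_1,\delta_2)\le 1$) all such exponents are bounded by an absolute constant, so every $e^{C(\cdots)}$ is $\le e^{C'}$ and may be swallowed into the constant $C$ in \cref{eq:bod}. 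A secondary technical point is matching the polynomial degrees: \cref{eq:re1,eq:re2} produce $1+g^\tau(s)$, \cref{lem:4.5} produces an extra factor $(1+s)$, and the $(1-F(z))$-extraction produces one more; so the final power is $z \cdot g^\tau(z)$ as claimed rather than, say, $z^2 g^\tau(z)$, and one should check the degree count closes exactly. Once these exponential and polynomial factors are tracked, summing the three regional bounds gives \cref{eq:bod}.
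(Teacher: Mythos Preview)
Your decomposition into $\{W\le 0\}$, $\{0<W\le z\}$, $\{W>z\}$ matches the paper's $T_4,T_5,T_6$, and your treatment of the first two regions is fine (in fact your ``restructured'' bound $\E(f_z(W)|g(W)|^\tau\I(W\le z))\le c_1^{-1}(1-F(z))\E|g(W)|^\tau\le C(1-F(z))$ via \cref{eq:re0} is simpler than the paper's handling of $T_6$ and perfectly adequate).

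The genuine gap is on $\{W>z\}$. You are missing the one-line observation that drives the paper's argument: since $g$ is nondecreasing, $w\mapsto G(w)-G(w-z)$ is nondecreasing, and its value at $w=z$ is $G(z)$; hence
\[
\I(W>z)\ \le\ e^{-G(z)}\,e^{G(W)-G(W-z)}\,\I(W>z).
\]
This puts the tail exactly in the form governed by \cref{eq:re1} with $s=z$, and after \cref{lem:4.5} one gets $T_4\le C e^{-G(z)}(1+zg^{\tau-1}(z))$. The factor $e^{-G(z)}$ is then converted via the \emph{lower} bound in \cref{wg0}, giving $p(z)\le \max(1,c_3)(1+g(z))(1-F(z))$, i.e.\ a polynomial factor $(1+g(z))$ rather than an exponential one. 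You actually wrote this inequality down and then discarded it as ``double-counting''; it is not, and it is precisely what closes the argument.

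Two specific errors in your tail discussion: (i) the inequality $\I(W>z)\le e^{G(W)-G(z)}\I(W>z)$ is not ``wrong in sign'' --- it is correct --- but it is useless here because \cref{lem:4.4} controls $e^{G(W)-G(W-z)}$ on $\{W>z\}$, not $e^{G(W)-G(z)}$, and your proposed split at $W=2z$ to repair this is neither needed nor carried through; (ii) your crude bound $e^{-G(z)}\le C(1-F(z))e^{Cg(z)}$, obtained by integrating over $[z,z+1]$, introduces a factor $e^{Cg(z)}$ which is \emph{not} absorbable under the hypothesis $\delta zg^2(z)+\cdots\le 1$ (e.g.\ for $g(w)=w$ one has $g(z)\sim\delta^{-1/3}\to\infty$). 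The exponential bookkeeping you flag as an obstacle is therefore fatal along your route; the paper avoids it entirely via the monotonicity trick above.
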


\begin{proof}[Proof of \cref{lem:4.7}]
By \eq{eq:solution},
\begin{align*}
{\E(f_z(W)|g(W)|^{\tau}})
= {} & T_4+T_5+T_6,
\end{align*}
where
\begin{align*}
	T_4 & =  F(z) \E\lme  \frac{1-F(W)}{ p(W)}|g(W)|^\tau \I(W>z) \rme, \\
	T_5 & = (1-F(z))\E\lme  \frac{F(W)}{ p(W)}|g(W)|^\tau \I(W<0) \rme, \\
	T_6 & = (1-F(z))\E\lme  \frac{F(W)}{ p(W)}|g(W)|^\tau \I(0\leq W\leq z) \rme.
\end{align*}
\begin{enumerate}[label*=\roman*{).}]
    \item For $T_4$,  we first consider the case when $\tau \geq 1$.
As $g(w)$ is increasing, $e^{G(w)-G(w-z)}$ is also increasing with respect to $w$; thus,
\[
\I(W>z)\leq \frac{e^{G(W)-G(W-z)}\I(W>z)}{ e^{G(z)}}.
\]
By \cref{lem:4.5}, we have $\max (\delta, \delta_1, \delta_2) \leq 1$ and $z$, satisfying that $\delta zg^2(z)+\delta_1 zg^{\tau_1+1}(z)+ \delta_2 zg^{\tau_2}(z) \leq 1,$
\begin{align*}
    \E (f(W, z) + 1) \leq C (1 + z).
\end{align*}
Hence, by
\cref{wg0,lem:4.4}, we have
 \begin{equ}
     T_4  \leq {} &  C e^{-G(z)} \E |g(W)|^{\tau - 1} e^{G(W)-G(W-z)} \I(W > z)   \nn\\
     \leq {}& C e^{-G(z)} (1 +  g^{\tau - 1}(z) ) \E (f (W, z) + 1)\\
     \leq {}& C e^{-G(z)} (1 + z g^{\tau - 1}(z) )  \\
     \leq {}& C(1+ zg^\tau (z)) (1-F(z)),\lbl{T1}
 \end{equ}
for $\max (\delta, \delta_1, \delta_2) \leq 1$ and $z$, satisfying that $\delta zg^2(z)+\delta_1 zg^{\tau_1+1}(z)+ \delta_2 zg^{\tau_2}(z) \leq 1.$
If $0 \leq \tau < 1$, then $g^{\tau} (w) \leq 2 \bigl(1 + g(w)\bigr)/\bigl( 1 + g^{1 - \tau}(z)\bigr)$ for $w > z$.
Therefore, \cref{T1} also holds for $0 \leq \tau <  1.$
\item As to $T_5$, because $F(w)/p(w)\leq 1/c_1$ for $w\leq 0$,
\[
    T_5\leq \frac{1}{c_1}(1-F(z))\E|g(W)|^{\tau} \I(W < 0).
\]
By  \cref{eq:re0}, we have
\begin{equation}\label{T2}
T_5\leq C(1-F(z))
\end{equation}
for some constant $C$.
\item
We now bound $T_6$. By \cref{lem:4.5,lem:4.4},
\begin{equ}\lbl{T3}
T_6\leq {}& C(1-F(z))\E e^{G(W)}|g(W)|^{\tau}\I(0\leq W\leq z)\nn\\
\leq {}& C(1-F(z))(1+g^{\tau}(z))\E e^{G(W)}\I(0\leq W\leq z)\nn\\
\leq {}& C(1-F(z))(1+zg^{\tau}(z)).
\end{equ}
\end{enumerate}
By \cref{T1,T2,T3}, we have
\[
\E(f_z(W)|g(W)|^{\tau})\leq C(1+zg^{\tau}(z))(1-F(z)),
\]
which completes the proof.
\end{proof}

\subsection{Proofs of Propositions 4.1 and 4.2} 
\label{sub:proposition}

We are now ready to give the proofs of \cref{pro:4.1,pro:4.2}.

\begin{proof}[Proof of \cref{pro:4.1}]
    Recalling  \cref{d0}, we  have
    \begin{equ}
        I_1 & \leq d_0 \E \Bigl( \sup_{|t|\leq \delta} \bigl\lvert f_z(W + t) g(W + t) - f_z(W) g(W)\bigr\rvert\Bigr)\\
            & \leq \delta d_0 \E \sup_{|t| \leq \delta}\left|  (f_z(W+ t )g(W+ t ))'
  \right|.
  \label{i1-00}
    \end{equ}
We first prove \eqref{IA}. By \cref{lem:4.6}, $\|f_z\|\leq 1/c_1$
and $\|f_z'\|\leq 2$. Thus, for $0 < \delta \leq 1$,
\begin{equ} \label{4.31}
\ML {\E\Bigl(\sup_{| t | \leq \delta}|\bigl(f_z(W+ t )g(W+ t )\bigr)'|\Bigr)}\nn\\
 \leq {} &  \E \Bigl(\sup_{| t |\leq \delta} \bigl( \left|  f_z(W+ t ) g'(W+ t )  \right|
+ \left|  f_z'(W+ t ) g(W+ t )  \right|\bigr)\Bigr)\nn\\
 \leq {} &  (2+1/c_1) \E\Bigl(\sup_{| t |\leq \delta} \bigl( |g'(W+ t )| + |g(W+ t )|
\bigr) \Bigr) \nn\\
\leq {} &  4 c_3 (1 + 1/c_1) (1 + c_2)  \bigl( \E|g(W)| + \mu_1 \bigr), 
\end{equ}
where in the last inequality we use \eqref{c3} and \cref{lem:4.2}.
This proves \eqref{IA} by  \cref{4.31,i1-00,eq:re0}.

Next, we prove \eqref{IB}. Similar to the proof of \cref{IA}, we first calculate the
following term: $$\E\Bigl(\sup_{| t | \leq \delta}|(f_z(W+ t )g(W+ t ))'|\Bigr).$$
Note that
\begin{equ}
  \lbl{fg'}
  & (f_z(w)g(w))' \\
  & =  \begin{cases}
    \frac{p(w)g(w) + F(w)g'(w) + F(w)g^2(w) }{ p(w)} (1-F(z)), & w\leq z,\\
    \frac{-p(w)g(w) + (1-F(w))g'(w) + (1-F(w))g^2(w) }{ p(w)} F(z), & w>z.
  \end{cases}
\end{equ}
For $w+ t \leq 0$, by \cref{wg<0}, we have
\begin{align*}
    \ML |(f_z(w+ t )g(w+ t ))'| \\
    & \leq (1-F(z))\Big( 2|g(w+ t )| + \frac{g'(w+ t ) }{\max\{ c_1,
|g(w+ t )|\})} \Big)\nn\\& \leq (1-F(z)) \left( 2\left|  g(w+ t )  \right| +
c_3(1+1/c_1) \right) \\
& \leq C (1 - F(z)) ( |g(w)| + 1).
\end{align*}
Thus, by \cref{eq:re0},
\begin{equation}
\E\biggl(\sup_{\left|   t   \right|\leq \delta}|(f_z(W+ t )g(W+ t ))'|\I(W+ t \leq 0) \biggr) \leq C(1-F(z)).
\label{i1-01}
\end{equation}

For $w+ t >z$, and $|t| \leq \delta$, again by \cref{lem:4.2}, we have
\begin{equ}
    \ML |(f_z(w+ t )g(w+ t ))'| \\
     \leq {} &  F(z) \Bigl( |g(w+ t )| + \frac{1-F( w + t )}{ p(w + t)} (|g'(w+ t )| + |g(w+ t )|^2) \Bigr)\nn\\
 \leq {} &  C  (1+\left|  g(w+ t )
\right| )  \nn\\
\leq {} & C  (|g(w) | + 1). \nn
\end{equ}
Hence, by \cref{lem:4.4,lem:4.5}, we have
\begin{equ}
    \ML \E \sup_{|t| \leq \delta }|(f_z(W+ t )g(W+ t ))'| \I(W + t \geq z)  \\
    \leq {} &  C\E\Bigl(\bigl(|g(W)| + 1\bigr)\I(W>z-\delta)\Bigr)\nn\\
 \leq {} &  C p(z-\delta) \E \left(e^{G(W)-G(W-z+\delta)} |g(W)| \I(W>z-\delta)\right)\nn\\
 \leq {} &  C e^{\delta g(z)} p(z) (1+g(z)) \E\left(e^{G(W)-G(W-z+\delta)}\I(W>z-\delta)\right)\nn\\
 \leq {} &  C e^{\delta g(z)} (1+zg^2(z)) ( 1 - F(z) ), \lbl{eq:wgz}
\end{equ}
where we use the \cref{lem:4.1} in the last line.
Also note that by \cref{eq:delta-gz},  $\delta g(z) \leq \delta + \delta z g^2(z) \leq 2$ for $z\geq 1$ and $\delta g(z) \leq \mu_1 $ for $ 0 \leq z \leq 1$. Hence,
\begin{equ}
	\label{eq:deltagz-bound}
	\delta g(z) \leq \max (2, \mu_1).
\end{equ}
Thus, \cref{eq:wgz,eq:deltagz-bound} yield
\begin{equ}
& \E\biggl(\sup_{\left|   t   \right|\leq \delta} |(f_z(W+ t )g(W+ t ))'|\I(W+ t >z)\biggr) \\
& \leq C(1+zg^2(z))(1-F(z)).
\label{i1-02}
\end{equ}

For $w+ t \in (0,z)$ and $|t| \leq \delta$,  by \cref{bon2,fg',eq:deltagz-bound}, we have
\begin{equ}
\ML |(f_z(w+ t )g(w+ t ))'|\\
& \leq C (1 - F(z)) e^{G(w + t)} \bigl(1 + g(w + t)^2\bigr) \\
& \leq C (1 - F(z)) e^{G(w) + \delta g(z) } \bigl(1 + |g(w)|^2\bigr) \\
& \leq C (1 - F(z) ) e^{G(w)} \bigl( 1 + |g(w)|^2 \bigr). \nn
\end{equ}
By \cref{lem:4.4,lem:4.5,bon2}, we have
\begin{equ}
\ML \E\biggl(\sup_{\left|   t   \right|\leq \delta} |(f_z(W+ t )g(W+ t ))'|\I(0\leq W+ t \leq z) \biggr) \\
& \leq C (1 - F(z)) \E e^{G(W)} ( 1 + |g(W)|^2 ) \I( -\delta \leq W \leq z + \delta ) \\
& = C (1 - F(z)) \E e^{G(W)} ( 1 + |g(W)|^2 ) \I( -\delta \leq W \leq 0) \\
& \qquad  +C (1 - F(z) )  \E e^{G(W)} ( 1 + |g(W)|^2 ) \I( 0 \leq W \leq z + \delta )
 \\
& \leq C e^{\mu_1 } (1 + \mu_1^2) (1 - F(z)) \\
& \qquad  + C (1 - F(z)) \bigl( 1 + (z+\delta) g^2(z + \delta)\bigr) \\
 & \leq C (1 - F(z)) (1 + zg^2(z)).
 \label{i1-03}
\end{equ}
Putting together \cref{i1-01,i1-02,i1-03} gives
\begin{equation}
  \E\biggl(\sup_{\left|   t   \right|\leq \delta} |(f_z(W+ t )g(W+ t ))'|\biggr)\leq
  C(1+zg^2(z)) (1-F(z)).
  \label{eq:fg'bound}
\end{equation}
By \cref{i1-00,eq:fg'bound}, we obtain \cref{IB}.
\end{proof}

\begin{proof}
    [Proof of \cref{pro:4.2}]

    By \cref{lem:4.6}, we have $\|f_zg\|\leq 1$; thus, by \cref{de1,eq:re0},
    \begin{align*}
        I_2+I_3&\leq C \E|\E(\hatK_1 |W)-1|\leq C \delta_1\bigl( \E(|g(W)|^{\tau_1}) + 1\bigr) \leq C\delta_1 .
    \end{align*}

    To bound $I_4$, by \cref{de2,eq:re0,fnorm}, we have
    \begin{align*}
    I_4\leq C \delta_2.
    \end{align*}
    This proves \cref{pro:4.2-00}.

    We now move to prove \cref{pro:4.2-01,pro:4.2-02}. As to $I_2$,
    by \cref{de1} and  \cref{lem:4.7}, for $z \geq 0, \max (\delta, \delta_1, \delta_2) \leq 1$ and $\delta z g^2(z) + \delta_1 z g^{\tau_1 + 1} (z) + \delta_2 z g^{\tau_2} (z) \leq 1$, we have
\begin{equ}
I_2&\leq \delta_1 \E\left(f_z(W)|g(W)|(|g(W)|^{\tau_1}+1\right)\nn\\
&\leq  C \delta_1 \E\left(f_z(W)(1+|g(W)|^{\tau_1+1})\right) \nn\\
&\leq  C\delta_1(1+zg^{\tau_1+1}(z)) (1 - F(z) ) .\label{I2}
\end{equ}
As to $I_3$, note that
\[
\I(W>z)\leq \frac{e^{G(W)-G(W-z)}}{ e^{G(z)}}\I(W>z).
\]
By \cref{lem:4.4,lem:4.5},
\begin{equ}
\ML \E\bigl((1+|g(W)|^{\tau_1})\I(W>z)\bigr)\\
\leq {}& Cp(z)\E\left(e^{G(W)-G(W-z)}(1+|g(W)|^{\tau_1})\I(W>z)\right)\\
\leq {}& C(1+zg^{\tau_1}(z))p(z) \\ 
\leq {}& C (1 + zg^{\tau_1 + 1} (z) ) ( 1 - F(z) ),
\label{eq:477}
\end{equ}
where we use \cref{wg0} in the last inequality. Thus, by \cref{lem:4.4} and \cref{eq:477},
\begin{equ}
I_3\leq {}&  \delta_1 \bigl(1-F(z)\bigr)E\bigl( |g(W)|^{\tau_1} + 1 \bigr) \\
          & + \delta_1 \E\Bigl( \bigl( |g(W)|^{\tau_1}+ 1\bigr)\I(W>z + \delta) \Bigr)\nn\\
\leq {} & \delta_1 \bigl(1-F(z)\bigr)E\bigl( |g(W)|^{\tau_1} + 1 \bigr) \\
        & + \delta_1 \E\Bigl( \bigl( |g(W)|^{\tau_1}+ 1\bigr)\I(W>z) \Bigr)\nn\\
\leq {}& C\delta_1(1+zg^{\tau_1+1}(z)) ( 1 - F(z) ) . 
\lbl{I3}
\end{equ}
\cref{pro:4.2-01} now follows  by  \cref{I2,I3}.

As to $I_4$, because $|R(W)|\leq \delta_2(1+|g(W)|^{\tau_2}),$ by \cref{eq:bod}, we have
\begin{align}
I_4\leq C\delta_2(1+zg^{\tau_2}(z))(1-F(z)) . 
\lbl{I4}
\end{align}
This completes the proof of  \cref{pro:4.2}.
\end{proof}


\subsection{Proof of Remark 2.1} 
In this subsection, we assume that the condition \eqref{de1} in \cref{thm:2.1}  is replaced by
\crefrange{r2.1-a}{r2.1-c}, then the result of \cref{r2.1} follows from the proof of \cref{thm:2.1}, \cref{pro:4.1,pro:4.2} and the following proposition: 

\begin{proposition}
    Assume that the condition \eqref{de1} in \cref{thm:2.1}  is replaced by
    \crefrange{r2.1-a}{r2.1-c}, then \eqref{pro:4.2-00} and \eqref{pro:4.2-01} hold.
    \label{pro:4.3}
\end{proposition}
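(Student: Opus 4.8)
The plan is to follow the proof of \cref{pro:4.2} almost line by line, the only genuinely new point being the contribution of the random variable $K_2$ in \cref{r2.1-b}. Throughout, write $\zeta(W,s)=f(W,s)+1$ with $f$ as in \cref{Fws}, so that \cref{r2.1-c} reads $\E\bigl(K_2(f(W,s)+1)\bigr)\leq\delta_1(1+g^{\tau_1}(s))\,\E(f(W,s)+1)$ for every $s\geq0$; I will use this at $s=0$ and at $s=z$. First I would substitute the bound \cref{r2.1-b} for $|\E(\hatK_1\mid W)-1|$ into the definitions of $I_2$ and $I_3$, splitting $I_2=I_2'+I_2''$ and $I_3=I_3'+I_3''$, where $I_2''$ and $I_3''$ are exactly the quantities arising from $\delta_1(|g(W)|^{\tau_1}+1)$ that are estimated in the proof of \cref{pro:4.2} (bounded there by $C\delta_1$, and by $C\delta_1(1+zg^{\tau_1+1}(z))(1-F(z))$ in the stated range, via \cref{lem:4.4,lem:4.6,lem:4.7}). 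It then remains to estimate $I_2'=\E\bigl(K_2|f_z(W)g(W)|\bigr)$ and $I_3'=\E\bigl(K_2|\P(Y>z)-\I(W>z+\delta)|\bigr)$.

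For \cref{pro:4.2-00} this is immediate: by \cref{lem:4.6}, $\|f_zg\|\leq1$ and $|\P(Y>z)-\I(W>z+\delta)|\leq1$, so $I_2'+I_3'\leq2\,\E K_2$, and applying \cref{r2.1-c} with $s=0$, where $\zeta(W,0)\equiv1$ and $g(0)=0$, gives $\E K_2\leq\delta_1$, hence $I_2'+I_3'\leq C\delta_1$.

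For \cref{pro:4.2-01} I would decompose $I_2'$ over $\{W\leq0\}$, $\{0<W\leq z\}$ and $\{W>z\}$, and $I_3'$ over $\{W\leq z+\delta\}$ and $\{W>z+\delta\}\subset\{W>z\}$. On $\{W\leq0\}$, $|f_z(W)g(W)|\leq1-F(z)$ by \cref{fg} and $|\P(Y>z)-\I(W>z+\delta)|=1-F(z)$, so together with $\E K_2\leq\delta_1$ these regions contribute at most $C\delta_1(1-F(z))$. On $\{0<W\leq z\}$, I would use the representation $f_z(w)g(w)=c_1^{-1}(1-F(z))F(w)g(w)e^{G(w)}$ coming from \cref{eq:solution}, together with $F(w)\leq1$, the monotonicity $g(w)\leq g(z)$, and $e^{G(w)}=\zeta(w,z)$ on this set, to bound the contribution by $c_1^{-1}(1-F(z))\,g(z)\,\E\bigl(K_2\zeta(W,z)\bigr)$; \cref{r2.1-c} at $s=z$ and the bound $\E\zeta(W,z)\leq C(1+z)$ from \cref{lem:4.5} (valid in the stated range) turn this into $C\delta_1(1-F(z))\,g(z)(1+zg^{\tau_1}(z))$. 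On $\{W>z\}$ I would use $\I(W>z)\leq e^{-G(z)}\zeta(W,z)$ and $\|f_zg\|\leq1$, apply \cref{r2.1-c} and \cref{lem:4.5} as before, and finally $e^{-G(z)}\leq C(1+g(z))(1-F(z))$ from \cref{wg0,c211}. Using that $g$ is nondecreasing, all three contributions collapse to $C\delta_1(1+zg^{\tau_1+1}(z))(1-F(z))$, and the same argument gives the bound for $I_3'$.

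The hard part is precisely the point that already makes \cref{pro:4.2-01} nontrivial: on $\{W>0\}$ the only a priori bound is $|f_z(W)g(W)|\leq F(z)\approx1$, which destroys the smallness $1-F(z)$ demanded by the conclusion. Recovering it forces one to rewrite $f_z(W)g(W)$ — and the indicator $\I(W>z)$ — in terms of $\zeta(W,\cdot)$ so that the factor $e^{-G(z)}$ (equivalently $1-F(z)$) is exposed, and then to control the remaining $\zeta$-weighted expectation $\E\bigl(K_2\zeta(W,z)\bigr)$ through the \emph{weighted} hypothesis \cref{r2.1-c} rather than through $\E K_2$; balancing the factor $g(z)$ lost in $g(w)\leq g(z)$ against the $zg^{\tau_1}(z)$ produced by \cref{lem:4.5} is then routine. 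No new work is needed for $I_4$ since \cref{de2} is unchanged, and \cref{lem:4.4,lem:4.6,lem:4.7} are unaffected by the modification (\cref{lem:4.4} does not use \cref{de1} at all); the sole place where \cref{lem:4.5} invokes \cref{de1}, namely the estimate of its term $T_2$, adapts by the same device, the $K_2$-part of $T_2$ being absorbed through a $\zeta$-weighted application of \cref{r2.1-c}.
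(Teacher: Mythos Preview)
Your proposal is correct and follows essentially the same route as the paper: reduce to the three key estimates $\E K_2\leq\delta_1$, $\E|f_z(W)g(W)|K_2\leq C\delta_1(1+zg^{\tau_1+1}(z))(1-F(z))$, and $\E K_2\I(W>z)\leq C\delta_1(1+zg^{\tau_1+1}(z))(1-F(z))$, and prove them by the same three-region decomposition (using \cref{r2.1-c} at $s=0$ for the first, and at $s=z$ together with \cref{lem:4.5} and \cref{lem:4.1} for the other two). Your explicit remark that \cref{lem:4.5} itself must be re-verified under \cref{r2.1-b}--\cref{r2.1-c}, via the same $\zeta$-weighted device applied to its term $T_2$, is a point the paper uses implicitly but does not spell out.
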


\begin{proof}
    [Proof of \cref{pro:4.3}]
    Following the proof of \cref{pro:4.2}, it suffices to prove the following inequalities:
    \begin{gather}
        \E | K_2 | \leq \delta_1, \label{p4.3-01}
        \intertext{and for $z>0$ such that $\delta z g^2(z) + \delta_1 z g^{\tau_1 + 1} (z) + \delta_2 z g^{\tau_2} (z)  \leq 1$,}
        \E |f_z(W) g(W) K_2 | \leq C \delta_1 ( 1 + z g^{\tau_1 + 1} (z)) ( 1 - F(z)), \label{p4.3-02}\\
        \E |K_2| I(W > z) \leq C \delta_1 ( 1 + z g^{\tau_1 + 1} (z)) ( 1 - F(z)). \label{p4.3-03}
    \end{gather}

    For \cref{p4.3-01}, by \cref{r2.1-c} with $s = 0$, noting that  $\zeta(W, 0) \equiv 1$ and $g(0) = 0$, we have \cref{p4.3-01} holds.

    For \cref{p4.3-02}, by the definition of $f_z$, and by \cref{lem:4.1,lem:4.6},  we have
    \begin{equ}
        \label{p4.3-04}
        \E | f_z(W) g(W) K_2 | & \leq
                               T_7 + T_8 + T_9,
    \end{equ}
    where
    \begin{align*}
        T_7 & = (1 - F(z)) \E |K_2| I(W < 0), \\
        T_8 & =  (1 - F(z)) \E |K_2| g(W) e^{G(W)} I(0 \leq W \leq z) , \\
        T_9 & = \E |K_2| I(W > z).
    \end{align*}

    For $T_7$, by \cref{p4.3-01}, we have
    \begin{equ}
        \label{p4.3-05}
        T_7 \leq \delta_1 (1 - F(z)).
    \end{equ}
    For $T_8$, by the monotonicity of $g(\cdot)$ and by \cref{r2.1-c} and \cref{rem:a4.5}, we have
    \begin{equ}
        \label{p4.3-06}
        T_8 & \leq (1 - F(z) ) g(z) \E |K_2| \zeta(W, z)\\
            & \leq \delta_1 (1 - F(z)) ( 1 + g(z)^{\tau_1 + 1}) \E \zeta(W, s) \\
            & \leq C \delta_1 ( 1 + z g^{\tau_1 + 1}(z)) (1 - F(z)).
    \end{equ}
    For $T_9$, by the Chebyshev inequality, by \cref{r2.1-c,lem:4.5,lem:4.1}, we have
    \begin{equ}
        \label{p4.3-07}
        T_9 & \leq e^{-G(z)} \E |K_2| \zeta(W, z) I(W > z) \\
            & \leq C \delta_1 (1 + z g^{\tau_1}(z)) e^{-G(z)} \\
            & \leq  C \delta_1 ( 1 + z g^{\tau_1 + 1}(z)) (1 - F(z)).
    \end{equ}

    The inequality \cref{p4.3-02} follows from  \crefrange{p4.3-04}{p4.3-07}  while \cref{p4.3-03} follows from \cref{p4.3-07}. This completes the proof. \qedhere
\end{proof}

\subsection{Proof of Remark 2.2} 
In this subsection, we assume that the condition \cref{d2} is replaced by \cref{r2.2-a,r2.2-b}.
The conclusion of \cref{r2.2} follows from the proof of \cref{thm:2.1} and the following lemma.

\begin{lemma}
    \label{lem:4.10}
    Let the conditions in \cref{r2.2} be satisfied. Furthermore, $0 < \delta \leq 1$, and $s > 0$ such that
    $ \delta s g^2 (s) \leq 1$. For $0 \leq \tau \leq \max \left\{ 2 , \tau_1 + 1, \tau_2 \right\}$,
    inequalities \cref{eq:re0,eq:re1,eq:re2} hold.
\end{lemma}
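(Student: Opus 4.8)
The plan is to retrace the proof of \cref{lem:4.4} essentially line by line, since \cref{lem:4.10} only asserts that its three conclusions \cref{eq:re0,eq:re1,eq:re2} survive when \cref{d2} is weakened to \cref{r2.2-a,r2.2-b}. Inspecting that proof, condition \cref{d2} is used in exactly two places: the bound of the term $Q_2=\E R(W)g_+^{\tau-1}(W)$ in the derivation of \cref{eq:re0}, and the bound of $M_4=\E R(W)g^{\tau-1}(W)f(W,s)$ in the derivation of \cref{eq:re1,eq:re2}; everywhere else only \cref{d0,d1} and \crefrange{icon1}{icon4} enter, so those estimates carry over verbatim. Under \cref{r2.2-a} both $Q_2$ and $M_4$ acquire one extra additive contribution, coming from the term $d_2\,\I(|W|>\kappa)$, and the whole task reduces to showing these two contributions are bounded by a constant. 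Once that is done, the bootstrap producing \cref{eq:re0} and the rearrangement producing \cref{eq:re1,eq:re2} (absorbing a term of the form $(\alpha+\tfrac{3(1-\alpha)}{4})\E|g(W)|^\tau f(W,s)$ into the left-hand side) go through unchanged, only with larger constants; the step from $\tau\geq 2$ to $0\leq\tau<2$ is then the same Cauchy--Schwarz interpolation used at the end of the proof of \cref{lem:4.4}.

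For $Q_2$ the extra piece is $d_2\,\E g_+^{\tau-1}(W)\I(|W|>\kappa)$. By \cref{d1} we have $|g(W)|\leq d_1/\delta$ pointwise, hence $g_+^{\tau-1}(W)\leq (d_1/\delta)^{\tau-1}$, and \cref{r2.2-b} gives $d_2\,\P(|W|>\kappa)\leq d_3 e^{-2 s_0 d_1^{-1}\delta^{-1}}$, so the piece is at most $d_3(d_1/\delta)^{\tau-1}e^{-2 s_0 d_1^{-1}\delta^{-1}}$. Since $\delta s_0 g^2(s_0)\leq 1$ forces $s_0/\delta\to\infty$ faster than any power of $1/\delta$ as $\delta\to0$, this is bounded by a finite constant depending only on $d_1,d_3,\tau$ and the structure constants of $g$, and for $\delta$ bounded away from $0$ it is trivially bounded as well. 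Thus the estimate of $Q_2$ in the proof of \cref{eq:re0} holds with the constant $(4/(1-\alpha))^{\tau-1}$ enlarged, and combining the resulting $g_+$- and $g_-$-inequalities closes the bootstrap for \cref{eq:re0} exactly as before.

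For $M_4$ the extra piece is $d_2\,\E |g(W)|^{\tau-1}f(W,s)\I(|W|>\kappa)$; on $\{W<0\}$ one has $f(W,s)=0$, so only $\{W>\kappa\}$ matters. There $|g(W)|^{\tau-1}\leq(d_1/\delta)^{\tau-1}$ and, by \cref{d1} together with $g\geq 0$ on $[0,\infty)$ and monotonicity, $f(W,s)\leq e^{G(W)-G(W-s)}\leq e^{s\,d_1/\delta}$, so by \cref{r2.2-b} the piece is at most $d_3(d_1/\delta)^{\tau-1}e^{s d_1\delta^{-1}}e^{-2 s_0 d_1^{-1}\delta^{-1}}$. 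The main obstacle is precisely to see this last quantity is bounded uniformly over the admissible range of $(\delta,s)$: here one uses that $\delta s g^2(s)\leq 1$ forces $s\leq s_0$, so that the super-exponential smallness $e^{-2 s_0 d_1^{-1}\delta^{-1}}$ supplied by \cref{r2.2-b} dominates both the growth $e^{s d_1\delta^{-1}}$ and the polynomial factor $(d_1/\delta)^{\tau-1}$ --- this is exactly the role of the exponent $2 s_0 d_1^{-1}\delta^{-1}$ in condition \cref{r2.2-b}. Granting this bound, the extra term is at most a constant and hence trivially at most $C(1+g^\tau(s))(\E f(W,s)+1)$, so $M_4$ obeys the same estimate as in \cref{lem:4.4} with an enlarged constant; feeding this into the decomposition of $\E g(W)^\tau f(W,s)$ and rearranging yields \cref{eq:re1,eq:re2}. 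Finally, since \cref{d2} enters the entire proof of \cref{thm:2.1} (through \cref{pro:4.1,pro:4.2,lem:4.5,lem:4.7}) only via \cref{lem:4.4}, re-establishing \cref{eq:re0,eq:re1,eq:re2} in this way is all that is needed to obtain the conclusion of \cref{r2.2}.
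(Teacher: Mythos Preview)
Your proposal follows the paper's proof essentially line for line: both identify that condition \cref{d2} enters the proof of \cref{lem:4.4} only through the bounds on $Q_2$ and $M_4$, and both control the extra contribution $d_2\,\I(|W|>\kappa)$ by combining the pointwise bound $|g(W)|\le d_1/\delta$ from \cref{d1}, the tail estimate \cref{r2.2-b}, and the restriction $s\le s_0$. One small imprecision: your assertion that $s_0/\delta\to\infty$ ``faster than any power of $1/\delta$'' is not correct in general (for $g(w)=w^p$ one gets $s_0/\delta\asymp\delta^{-(2p+2)/(2p+1)}$, a fixed power), but what is actually needed---that $s_0/\delta$ is bounded below by a positive constant for $\delta\le 1$, so that the exponential $e^{-2s_0 d_1^{-1}\delta^{-1}}$ kills the polynomial $\delta^{-(\tau-1)}$---does hold and is what the paper uses as well.
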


\begin{proof}
    Recall that
    $s_0 = \max \left\{ s: \delta s g^2 (s) \leq 1 \right\}$ and $\delta \leq 1$. We have
    \begin{align*}
        s_0 \geq s_1  \quad\text{and} \quad  \delta s_1 g^2(s_1) = 1.
    \end{align*}
    Following the proof of \cref{lem:4.4}, it suffices to prove the following two inequalities.

    \noindent
    For $Q_2$ defined in \cref{l2-0g},
    \begin{align}
        Q_2 \leq \biggl( \alpha + \frac{ 1 - \alpha }{ 4 }\biggr) \E g_{+}^{\tau} (W) + C,
        \label{r2.2-01}
    \end{align}
    and for $M_4$ defined in \cref{l4.4-0m4},
    \begin{align}
        \label{r2.2-02}
        M_4
        & \leq \biggl(\alpha + \frac{1 - \alpha}{4}\biggr) \E |g(W)|^\tau f(W) + \biggl(\frac{4 \alpha}{ 1 - \alpha }\biggr)^{\tau -1} \E f(W) + C.
    \end{align}
    For $Q_2$, by \cref{r2.2-a} and similar to \cref{l2-q2}, we have
    \begin{align*}
        Q_2 & \leq \biggl( \alpha + \frac{ 1 - \alpha }{ 4 }\biggr) \E g_{+}^{\tau} (W) + \biggl( \frac{ 4 }{ 1 - \alpha}\biggr)^{ \tau - 1} + d_2 \E g_{+}^{\tau} (W) \I(W > \kappa).
    \end{align*}
    For the last term, by \cref{d1,r2.2-b} and noting that $0 \leq \tau \leq  \max \left\{ 2 , \tau_1+ 1, \tau_2 \right\}$, we obtain
    \begin{equ}
        d_2 \E g_{+}^{\tau} (W) \I (W > \kappa) & \leq d_1^{-\tau}d_2 \delta^{-\tau} \P (W > \kappa) \\
                                                & \leq d_1^{-\tau}d_3 \delta^{-\tau} \exp ( - 2 s_0 d_1^{-1}\delta^{-1}) \\
                                                & \leq d_1^{-\tau} d_3 \sup_{\delta > 0} \{\delta^{-\tau} \exp ( - 2 s_1 d_1^{-1} \delta^{-1})\} \\
                                                & =  d_3 \Bigl( \frac{\tau}{ 2 s_1}\Bigr)^{\tau} e^{-\tau},
        \label{r2.2-03}
    \end{equ}
    where the equality holds when $\delta = {2 s_1}/{(d_1 \tau)  }$.
    The inequality \cref{r2.2-01} follows from \cref{l2-q2,r2.2-03}.

    As to $M_4$, by \cref{r2.2-a}, we have
    \begin{align*}
        M_4
        & \leq \biggl(\alpha + \frac{1 - \alpha}{4}\biggr) \E |g(W)|^\tau f(W) + \biggl(\frac{4 \alpha}{ 1 - \alpha }\biggr)^{\tau -1} \E f(W) \\
        & \quad\quad + d_2 \E \bigl\lvert g^{\tau} (W)\bigr\rvert e^{ G(W) - G(W - s)} \I ( W > \kappa).
    \end{align*}
    For the last term, by \cref{d1,r2.2-b} and noting that $g(\cdot)$ is nondecreasing and $s \leq s_0$, similar to \cref{r2.2-03}, we have
    \begin{equ}
        \label{r2.2-04}
        \ML  d_2 \E \bigl\lvert g^{\tau} (W)\bigr\rvert e^{ G(W) - G(W - s)} \I ( W > \kappa) \\
        & \leq d_1^{-\tau}d_2 \delta^{-\tau} e^{ s d_1^{-1} \delta^{-1} } \P (W > \kappa) \\
        & \leq d_1^{-\tau} d_3 \delta^{-\tau} e^{ - s_0 d_1^{-1} \delta^{-1}} \\
        & \leq d_1^{-\tau} d_3 \sup_{\delta > 0} \{\delta^{-\tau} e^{-s_1 d_1^{-1} \delta^{-1}}\} \\
        & =  d_3 \Bigl(\frac{ \tau}{ s_1}\Bigr)^{\tau} e^{-\tau},
    \end{equ}
    where the equality holds when $\delta = {s_1 }/{ (d_1\tau)}$. Combining \cref{l4.4-06,r2.2-04}, inequality \cref{r2.2-02} holds.
    Following the proof of \cref{lem:4.4} and replacing \cref{l2-q2} and \cref{l4.4-06} with \cref{r2.2-01} and \cref{r2.2-02}, respectively, we complete the proof of \cref{lem:4.10}.
\end{proof}



\section{Proofs of Theorems 3.1--3.2}\label{proof2}

\setcounter{equation}{0}

\subsection{Proof of Theorem 3.1}
\def\F{\mathcal{F}}
In this subsection, we use \cref{r2.1,r2.2} to prove the result.

We first prove some preliminary lemmas.
\begin{lemma}
    Let $\xi \sim \rho$. For $s \in \R$, define
    \begin{align*}
        \psi_n (s) = \frac{ \E \bigl( \xi e^{ \frac{\xi^2}{2n} +  \xi s }  \bigr) }{ \E \bigl( e^{ \frac{\xi^2}{2n}  +  \xi s} \bigr) }, \quad
        \psi_{\infty} (s) = \frac{ \E \bigl( \xi e^{\xi s} \bigr) }{ \E \bigl( e^{\xi s} \bigr) },
        \intertext{and}
        \phi_n (s) = \frac{ \E \bigl( \xi^2 e^{ \frac{\xi^2}{2n} +  \xi s }  \bigr) }{ \E \bigl( e^{ \frac{\xi^2}{2n}  +  \xi s} \bigr) }, \quad
        \phi_{\infty} (s) = \frac{ \E \bigl( \xi^2 e^{\xi s} \bigr) }{ \E \bigl( e^{\xi s} \bigr) }.
    \end{align*}
    Let $m = \frac{1}{n} \sum_{i = 1}^n X_i$ and $m_i = \frac{1}{n} \sum_{j \neq i} X_j$.
    We have
    for each $1 \leq i \leq n$,
    \begin{align}
        \bigl\lvert \psi_{\infty} (m) - \psi_n (m_i) \bigr\rvert \leq C n^{-1},
        \label{l5.1-01}\\
        \bigl\lvert \phi_{\infty} (m) - \phi_n (m_i) \bigr\rvert \leq C n^{-1},
        \label{l5.1-02}
    \end{align}
    where $C$ is a positive constant depending only on $L$.
    \label{lem:5.1}
\end{lemma}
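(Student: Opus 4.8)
The plan is to pass from $\psi_\infty(m)$ to $\psi_n(m_i)$ through the intermediate quantity $\psi_\infty(m_i)$, controlling the first step by a Lipschitz estimate and the second by a uniform-in-$s$ comparison of the finite-$n$ and $n=\infty$ tilted expectations; both ingredients rest on \cref{con:gcw01}, which forces $\xi\in[-L,L]$ almost surely, together with $|m-m_i|=|X_i|/n\leq L/n$. First I would record the Lipschitz bounds. Since $\psi_\infty(s)=\frac{d}{ds}\log\E e^{\xi s}$ is the mean of $\xi$ under the exponential tilt with weight $e^{\xi s}$, its derivative $\psi_\infty'(s)=\phi_\infty(s)-\psi_\infty(s)^2$ is the variance of $\xi$ under that tilt, so the support condition gives $|\psi_\infty|\leq L$, $0\leq\psi_\infty'\leq L^2$, $|\phi_\infty|\leq L^2$ and, differentiating once more, $|\phi_\infty'|\leq 2L^3$. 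The same computation with the tilt $e^{\xi^2/(2n)+\xi s}$ gives identical bounds for $\psi_n$ and $\phi_n$, uniformly in $n\geq 1$; in particular all four functions are Lipschitz with a constant depending only on $L$.

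Next I would show $\sup_{s\in\R}|\psi_n(s)-\psi_\infty(s)|\leq Cn^{-1}$ and $\sup_{s\in\R}|\phi_n(s)-\phi_\infty(s)|\leq Cn^{-1}$ with $C=C(L)$. Writing $A_n=\E(\xi e^{\xi^2/(2n)+\xi s})$, $B_n=\E(e^{\xi^2/(2n)+\xi s})$ and $A_\infty,B_\infty$ for the $n=\infty$ analogues, one has $\psi_n(s)-\psi_\infty(s)=(A_nB_\infty-A_\infty B_n)/(B_nB_\infty)$. The elementary bound $0\leq e^{\xi^2/(2n)}-1\leq \frac{L^2}{2n}e^{L^2/2}$ on the support yields $|A_n-A_\infty|\leq Cn^{-1}B_\infty$ and $|B_n-B_\infty|\leq Cn^{-1}B_\infty$; combined with $|A_\infty|\leq LB_\infty$ and $B_nB_\infty\geq B_\infty^2$ this bounds the quotient by $Cn^{-1}$ uniformly in $s$. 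Replacing $\xi$ by $\xi^2$ in the numerators handles $\phi_n-\phi_\infty$ in exactly the same way.

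Finally I would combine the two steps: $\psi_\infty(m)-\psi_n(m_i)=\bigl(\psi_\infty(m)-\psi_\infty(m_i)\bigr)+\bigl(\psi_\infty(m_i)-\psi_n(m_i)\bigr)$, where the first bracket is at most $L^2|m-m_i|\leq L^3n^{-1}$ and the second is at most $Cn^{-1}$ by the uniform comparison, which gives \cref{l5.1-01}; the argument for \cref{l5.1-02} is identical with $\phi$ in place of $\psi$. The one place needing care is the uniformity in $s$ of $|\psi_n(s)-\psi_\infty(s)|$: it is precisely the compactness of the support in \cref{con:gcw01} that makes both the Taylor remainder of $e^{\xi^2/(2n)}$ and the normalizing ratios bounded independently of $s$, after which the estimate is routine bookkeeping.
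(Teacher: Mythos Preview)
Your proof is correct and follows essentially the same route as the paper: split $\psi_\infty(m)-\psi_n(m_i)$ through the intermediate $\psi_\infty(m_i)$, bound the first increment by the Lipschitz estimate $|\psi_\infty'|\leq L^2$ together with $|m-m_i|\leq L/n$, and bound the second by a uniform-in-$s$ comparison of $\psi_n$ with $\psi_\infty$ using $0\leq e^{\xi^2/(2n)}-1\leq Cn^{-1}$ on the bounded support. The paper's argument is the same in substance, with only cosmetic differences in how the quotient $A_n/B_n-A_\infty/B_\infty$ is expanded.
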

\begin{proof}
    [Proof of \cref{lem:5.1}]
    Recall that $|\xi| \leq L$ and observe that
    \begin{align*}
        \Bigl\vert \E \Bigl( \xi \bigl( e^{\frac{\xi^2}{2n} + \xi s} - e^{\xi s} \bigr) \Bigr) \Bigr\vert  & \leq
        \frac{1}{2n} \E |\xi|^3 e^{\frac{\xi^2}{2n} + \xi s} \leq \frac{L^3}{2 n} e^{L^2/2} \E e^{\xi s}, \\
        \Bigl\vert \E \Bigl( e^{\frac{\xi^2}{2n} + \xi s} - e^{\xi s}\Bigr) \Bigr\vert  & \leq
        \frac{1}{2n} \E |\xi|^2 e^{\frac{\xi^2}{2n} + \xi s} \leq \frac{L^2}{2 n} e^{L^2/2} \E e^{\xi s}, \\
        \bigl\lvert \E \xi e^{\xi s} \bigr\rvert & \leq L \E e^{\xi s},
    \end{align*}
    and
    \begin{align*}
        \E \bigl( e^{\frac{\xi^2}{2n} + \xi s}  \bigr) \geq \E e^{\xi s}.
    \end{align*}
    Hence,
    \begin{equ}
        \label{l5.1-03}
        |\psi_n (s) - \psi_{\infty} (s) | & \leq \frac{  \bigl\lvert \E e^{\xi s} \bigr\rvert \times \Bigl\lvert \E \xi e^{\frac{\xi^2}{2n} + \xi s} - \E \xi e^{\xi s}\Bigr\rvert   }{ \E e^{\frac{\xi^2}{2n} + \xi s } \E e^{\xi s} } \\
                                          & \quad + \frac{  \bigl\lvert \E \xi e^{\xi s} \bigr\rvert \times \Bigl\lvert \E  e^{\frac{\xi^2}{2n} + \xi s} - \E  e^{\xi s}\Bigr\rvert   }{ \E e^{\frac{\xi^2}{2n} + \xi s } \E e^{\xi s} } \\
                                          & \leq C n^{-1},
    \end{equ}
    where $C > 0$ depends only on $L$.
    Moreover,
    \begin{align*}
        \psi'_{\infty} (s) = \frac{\E \bigl( \xi^2 e^{\xi s}  \bigr) }{ \E \bigl( e^{\xi s} \bigr)  } - \left\{ \frac{ \E \bigl( \xi e^{\xi s} \bigr) }{\E \bigl( e^{\xi s} \bigr)} \right\}^2 .
    \end{align*}
    Recalling that $|\xi | \leq L\,, |X_i| \leq L  $ and $|m - m_i | \leq L/n$,  and using the fact that
    \begin{align*}
        \sup_{|s|\leq L } \bigl\lvert \psi'_{\infty}(s) \bigr\rvert \leq L^2,
    \end{align*}
    we have
    \begin{equ}
        \label{l5.1-04}
        \bigl\lvert \psi_{\infty} (m) - \psi_{\infty}(m_i) \bigr\rvert \leq L^3 n^{-1}.
    \end{equ}
    Following \cref{l5.1-03,l5.1-04}, the inequality \cref{l5.1-01} holds.

    A similar argument implies that \cref{l5.1-02} holds as well.
\end{proof}

    Set
	\begin{align}
		\label{eq:sigma-F-5}
		\F = \sigma \left\{ X_1, \ldots, X_n \right\}.
	\end{align}	
	For any $1 \leq i, j \leq n$, define
    \begin{equ}
		\label{eq:FiFij}
        \F^{(i)} = \sigma \bigl(  \left\{ X_k , k \neq i \right\} \bigr) \quad \F^{(i,j)} = \sigma \bigl( \left\{ X_k, k \neq i, j \right\} \bigr).
    \end{equ}

\begin{lemma}
    Let $W = n^{-1 + \frac{1}{2k}} \sum_{i = 1}^{n} X_i$, $G(w) = h^{(2k)} (0) w^{2k} / (2k)!$,
    and
    \begin{align*}
        \zeta (w, s) =
        \begin{cases}
            e^{G(w) - G(w- s)}, & w > s, \\
            e^{G(w)} ,          & 0 \leq w \leq s, \\
            1,                  & w < 0.
        \end{cases}
    \end{align*}
    Suppose \cref{d0,d1,r2.2-a,r2.2-b} are satisfied.  Then,
    we have
            \begin{equ}
                \label{l5.2-a}
                \E \biggl\lvert \frac{1}{n} \sum_{i = 1}^n \Bigl(X_i^2 - \conepb{X_i^2}{\F^{(i)}} \Bigr) \biggr\rvert \zeta (W, s)  \leq Cn^{-1/k} ( 1 + |s|^2 ) \, \E \, \zeta (W, s) ,
            \end{equ}
            where $C$ is a positive constant depending only on $\rho$.

    \label{lem:5.2}
\end{lemma}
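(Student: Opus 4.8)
The plan is to reduce the claimed bound on $\E\,\lvert A\rvert\,\zeta(W,s)$, where $A:=n^{-1}\sum_{i=1}^n D_i$ and $D_i:=X_i^2-\E(X_i^2\mid\F^{(i)})$ with $\F^{(i)}:=\sigma(X_j:j\ne i)$, to a $\zeta$-weighted \emph{second} moment of $A$ via Cauchy--Schwarz, and then to extract that second moment from a self-referential inequality built on $\E(D_i\mid\F^{(i)})=0$. First I would record that, given $\F^{(i)}$, the density of $X_i$ is proportional to $e^{x^2/(2n)+xm_i}\,\dd\rho(x)$, so $\E(X_i\mid\F^{(i)})=\psi_n(m_i)=:\bar X_i$ and $\E(X_i^2\mid\F^{(i)})=\phi_n(m_i)$ in the notation of \cref{lem:5.1}; hence $\E(D_i\mid\F^{(i)})=0$, $\lvert D_i\rvert\le 2L^2$, and (exactly as in the proof of \cref{lem:5.1}, using $\lvert\xi\rvert\le L$ and $\E e^{\xi^2/(2n)+\xi s}\ge1$) $\phi_n'$ is bounded uniformly over $\lvert s\rvert\le L$ and $n\ge1$. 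Writing $\zeta:=\zeta(W,s)=f(W,s)+1$ with $f$ as in \cref{Fws}, Cauchy--Schwarz gives $\E\,\lvert A\rvert\,\zeta\le(\E A^2\zeta)^{1/2}(\E\zeta)^{1/2}$, so the task reduces to bounding $\E A^2\zeta$.

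Next I would write $\E A^2\zeta=n^{-1}\sum_i\E(D_iA\zeta)$ and, for each $i$, subtract from $A\zeta$ an $\F^{(i)}$-measurable surrogate, which is legitimate because $\E(D_i\mid\F^{(i)})=0$. Splitting $A=n^{-1}D_i+A^{(-i)}$ with $A^{(-i)}=n^{-1}\sum_{l\ne i}D_l$, the $n^{-1}D_i\zeta$ term contributes only the diagonal $n^{-1}\E(D_i^2\zeta)\le 4L^4n^{-1}\E\zeta$. For the $A^{(-i)}\zeta$ term I replace $X_i$ by $\bar X_i$ throughout: set $\tilde W^{(i)}:=W-n^{-1+\frac1{2k}}(X_i-\bar X_i)$, $\zeta^{(i)}:=\zeta(\tilde W^{(i)},s)$, and let $(A^{(-i)})^{(i)}$ be $A^{(-i)}$ with each $m_l$ replaced by $m_l-n^{-1}(X_i-\bar X_i)$; all three are $\F^{(i)}$-measurable. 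Since $\lvert W-\tilde W^{(i)}\rvert=n^{-1+\frac1{2k}}\lvert X_i-\bar X_i\rvert\le\delta$ and $\partial_w\zeta=\partial_w f$ is given by \cref{eq:fpartialw}, monotonicity of $g$ gives $\lvert\partial_w\zeta(w,s)\rvert\le\lvert g(w)\rvert\,\zeta(w,s)$; combined with \cref{f:ratio} of \cref{lemma:1} (applicable since $\delta\lvert g(W)\rvert\le d_1$ by \cref{d1}) and \cref{bon2}, this yields $\lvert\zeta-\zeta^{(i)}\rvert\le Cn^{-1+\frac1{2k}}(\lvert g(W)\rvert+1)\,\zeta$ and $\zeta^{(i)}\le\mu_2\zeta$, while boundedness of $\phi_n'$ gives $\lvert A^{(-i)}-(A^{(-i)})^{(i)}\rvert\le C/n$.

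Two of the three resulting error pieces are then $O(n^{-1})\E\zeta$; the delicate one, $\E\bigl(D_i(A^{(-i)})^{(i)}(\zeta-\zeta^{(i)})\bigr)$, I would handle by pulling out the $\F^{(i)}$-measurable factor $(A^{(-i)})^{(i)}$, bounding $\lvert\E(D_i(\zeta-\zeta^{(i)})\mid\F^{(i)})\rvert=\lvert\E(D_i\zeta\mid\F^{(i)})\rvert\le\E(\lvert D_i\rvert\,\lvert\zeta-\zeta^{(i)}\rvert\mid\F^{(i)})$, then Cauchy--Schwarz, and finally (i) $\E\bigl((g(W)^2+1)\zeta\bigr)\le C(1+g^2(s))\E\zeta$, which follows from \cref{eq:re0,eq:re1,eq:re2} of \cref{lem:4.4} (valid here via \cref{lem:4.10} since \cref{r2.2-a,r2.2-b} hold), and (ii) $((A^{(-i)})^{(i)})^2\le4A^2+Cn^{-2}$. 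This produces
\[
  \bigl\lvert\E\bigl(D_i(A^{(-i)})^{(i)}(\zeta-\zeta^{(i)})\bigr)\bigr\rvert\le Cn^{-1+\frac1{2k}}(1+g(s))\,\bigl(\E A^2\zeta+Cn^{-2}\E\zeta\bigr)^{1/2}(\E\zeta)^{1/2}.
\]
Summing over $i$, dividing by $n$, and using $g(s)\le Cn^{1-\frac1{2k}}$ in the relevant range $s\le s_0$ (so that the lower-order remainders are absorbed), I arrive at
\[
  \E A^2\zeta\le Cn^{-1}\E\zeta+Cn^{-1+\frac1{2k}}(1+g(s))\,(\E A^2\zeta)^{1/2}(\E\zeta)^{1/2}.
\]
Solving this quadratic inequality in $(\E A^2\zeta)^{1/2}$ gives $\E A^2\zeta\le C\bigl(n^{-1}+n^{-2+\frac1k}(1+g(s))^2\bigr)\E\zeta$, hence $\E\,\lvert A\rvert\,\zeta\le C\bigl(n^{-1/2}+n^{-1+\frac1{2k}}(1+g(s))\bigr)\E\zeta$. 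Since $k\ge2$ we have $n^{-1/2}\le n^{-1/k}$, and because $g(s)=h^{(2k)}(0)s^{2k-1}/(2k)!$ a short exponent computation (using $s\le s_0$) gives $n^{-1+\frac1{2k}}(1+g(s))\le Cn^{-1/k}(1+s^2)$; this is \cref{l5.2-a}.

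The main obstacle is precisely this cross term. The crude bounds $\lvert\zeta-\zeta^{(i)}\rvert\le Cn^{-1+\frac1{2k}}\lvert g(W)\rvert\zeta$ and $\lvert A^{(-i)}\rvert\le2L^2$ would give only $\E A^2\zeta=O\bigl(n^{-1+\frac1{2k}}(1+g(s))\bigr)\E\zeta$, i.e.\ $\E\,\lvert A\rvert\,\zeta=O\bigl(n^{-1/2+\frac1{4k}}\bigr)\E\zeta$ for bounded $s$, which is too weak already for $k=2$ (where the true order is $n^{-1/2}\E\zeta$). Playing $A^{(-i)}$ off against the $\zeta$-weighted second moment of $g(W)$ rather than against its sup norm — which is what converts the estimate into the self-referential inequality above — is exactly what recovers the sharp $n^{-1/2}$ rate, and hence the prefactor $n^{-1/k}$ when $k\ge2$.
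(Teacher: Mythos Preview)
Your argument is correct and reaches the same conclusion, but it is genuinely different from the paper's proof. After the common Cauchy--Schwarz step $\E\lvert A\rvert\zeta\le(\E A^2\zeta)^{1/2}(\E\zeta)^{1/2}$, the paper expands the square into diagonal and off-diagonal parts $H_1+H_2$, replaces $H_2$ by a \emph{leave-two-out} version $H_3$ based on $\F^{(i,j)}$, writes $\zeta(W)=\zeta(W^{(i,j)})\exp\{q(W)-q(W^{(i,j)})\}$ with $q=\log\zeta$, and Taylor-expands $q$ to second order. The decisive point is an exact cancellation: the zeroth-order piece $M_{11}^{(i,j)}=\E^{(i,j)}X_i^2X_j^2-(\E^{(i,j)}X_i^2)^2$ vanishes at $n=\infty$ because $X_i,X_j$ are then conditionally independent given $\F^{(i,j)}$ (the density factors once the cross term $e^{(x+y)^2/2n}$ is dropped); the same mechanism kills the first-order piece, and the second-order remainder is $O(n^{-2/k}(|W|^4+1))$, giving $\E A^2\zeta\le Cn^{-2/k}(1+s^4)\E\zeta$ directly. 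Your route avoids this cancellation entirely: you stay with leave-one-out, build an $\F^{(i)}$-measurable surrogate by substituting $\bar X_i$ for $X_i$, and feed the cross term back into a self-referential inequality $\E A^2\zeta\le Cn^{-1}\E\zeta+Cn^{-1+1/(2k)}(1+g(s))(\E A^2\zeta)^{1/2}(\E\zeta)^{1/2}$, which you then solve as a quadratic. The paper's approach is more direct and yields the second-moment bound in one shot, but hinges on spotting the product-measure cancellation specific to this model; your bootstrap is more mechanical and would transfer to settings where no such exact independence structure is available. Both proofs rely on \cref{lem:4.4} (through \cref{lem:4.10}) at the end, so both carry the same implicit restriction $\delta s g^2(s)\le 1$, i.e.\ $s\le s_0$, which is all that is needed for the application via \cref{r2.1}.
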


We are now ready to prove \cref{thm:3.2}.

\begin{proof}
    [Proof of \cref{thm:3.2}]
    \def\F{\mathcal{F}}
    We first construct the exchangeable pair of $W$. For each $ 1 \leq i \leq n$, let $X_i'$ follow the conditional distribution of $X_i$ given $\left\{ X_j, j \neq i \right\}$, and be conditionally independent of $X_i$ given $\left\{ X_j, j \neq i \right\}$. Let $I$ be a random index uniformly distributed among $\left\{ 1, 2, \dots, n \right\}$, independent of all other random variables. Define $S_n' = S_n - X_I + X_I'$ and $W' = n^{-\frac{1}{2k}} S_n'$. Then $(W, W') $ is an exchangeable pair.
	Let \(\mathcal{F}, \F^{(i)}\) and \(\F^{(i,j)}\) be defined as in \cref{eq:sigma-F-5,eq:FiFij}.
	Let \(\psi_n, \psi_{\infty}, \phi_n\) and \(\phi_{\infty }\) be as defined in \cref{lem:5.1}.
    We have
    \begin{align}
		\conepb{X_i'}{ \F^{(i)} } = \conepb{X_i}{\F^{(i)}} = \psi_n (m_i(\mathbf{X})),
        \label{eq:gcw:t-01}
    \end{align}
    where
	$m_i (\mathbf{X})= \frac{1}{n} \sum_{j \neq i } X_j$. 

    Thus,
    \begin{equ}
        \conep{ X_I - X_I' }{\F} & = \frac{1}{n} \sum_{i = 1}^n \conep{X_i - X_i' }{ \F } \\
                                 & = m(\mathbf{X}) - \frac{1}{n} \sum_{i = 1}^n \conepb{X_i'}{ \F^{(i)} }  \\
								 & = m(\mathbf{X}) - \frac{1}{n} \sum_{i = 1}^n \psi_n ( m_i(\mathbf{X}) ) \\
                                 & = m(\mathbf{X}) - \psi_{\infty} ( m(\mathbf{X}) )  + r(\mathbf{X}) \\
                                 & = h'(m(\mathbf{X})) + r(\mathbf{X}),
                                 \label{eq:gcw:t-02}
    \end{equ}
	where \(m(\mathbf{X}) = (1/n) \sum_{i = 1}^n X_i\), $h$ is as defined in \cref{eq:gcw03}, and
    \begin{equ}
        \label{eq:gcw:t-03}
		r(\mathbf{X}) = \frac{1}{n} \sum_{i = 1}^n \left\{ \psi_{\infty} ( m(\mathbf{X}) ) - \psi_n ( m_i (\mathbf{X})) \right\}.
    \end{equ}
    By \cref{lem:5.1}, we have
    \begin{align*}
        |r(\mathbf{X})| \leq C n^{-1},
    \end{align*}
    where $C > 0$ is a constant depending only on $\rho$.
    As $\rho$ is symmetric, $h^{(2k + 1)}(0) = 0$.  By the Taylor expansion, for $|w| \leq L,$
    \begin{align*}
        |h'(w) - g(w)| \leq C |w|^{2k + 1},
    \end{align*}
    where $C >0$ is a constant depending only on $L$.
    Therefore,
    \begin{align*}
		\conep{W - W'}{\F} & = n^{-1 + \frac{1}{2k}} \conep{X_I - X_I'}{\F} \\
                                   & = n^{-1 + \frac{1}{2k}} \bigl( h'(m(\mathbf{X})) + r(m(\mathbf{X})) \bigr) \\
                                   & = \lambda ( g(W) + R(W) ),
    \end{align*}
    where $\lambda = n^{-2 + 1/k}$,
    \begin{align*}
        g(w) = \frac{ h^{(2k)}(0) }{ (2k - 1)! } w^{2k - 1}, \quad |R(w)| \leq C_1 n^{-1/k} (|w|^{2k + 1} + 1),
    \end{align*}
    where $C_1 > 0$ depends only on $\rho$.

    We now check the conditions \cref{r2.2-a,r2.2-b}.
    As $g(w) = \frac{h^{(2k)}(0)}{(k - 1)!} w^{2k - 1} $, then
    \begin{align*}
        |R(W)| \leq C_1 \Bigl( \frac{(k - 1)!}{ h^{(2k)}(0) } + 1 \Bigr) n^{-1/k} ( |W^2g(W)| + 1 ).
    \end{align*}
    Moreover, recalling that $|W| \leq L n^{\frac{1}{2k}}$, we have
    \begin{align*}
        |R(W)| \leq C_1 \left( n^{(2k-1)/2k } L^{2k+1} + 1 \right).
    \end{align*}
    Set
    \begin{align*}
        \kappa = \bigl( 2 C_1 \bigl( 1 + \frac{(k - 1)!}{h^{(2k)}(0)} \bigr) \bigr)^{-1/2} n^{\frac{1}{2k}},
    \end{align*}
    where $d_2 =  C_1 \left( n^{(2k-1)/2k } L^{2k+1} + \frac{(k - 1)!}{ h^{(2k)}(0) } + 2 \right) $.
    Thus,
    \begin{equ}
		\label{eq:5RW}
        |R(W)| \leq \frac{1}{2} ( |g(W)| + 1) +  d_2 I(|W| \geq \kappa).
    \end{equ}
    By \cite[][Propostion 6]{Ch10A}, for any $n \geq 1$ and $t \geq 0$,
    \begin{align*}
        \P (|W| \geq t) \leq 2 e^{-c_{\rho} t^{2k}},
    \end{align*}
    where $c_{\rho} > 0$ is a constant depending only on $\rho$.
    Note that $\delta = L n^{- 1 + \frac{1}{2k}}$ and by the definition of $g(\cdot)$, we have
    \begin{align*}
        s_0 = \max \left\{ s: \delta sg^2(s) \leq 1 \right\} = C_2 n^{(2k-1)/(2k (4k - 1))},
    \end{align*}
    where $C_2 > 0$ is a constant depending on $\rho$. Moreover, there exists a constant $d_1 > 0$ depending on $\rho$ such that $\delta|g(W)| \leq d_1$. Then, there exist positive  constants $C_3$ and $C_4$ depending on $\rho$ such that
    \begin{equ}
		\label{eq:6RW}
        d_2 e^{2 s_0 d_1^{-1} \delta^{-1}} \P (|W| \geq \kappa) \leq C_3  (n + 1) \exp\bigl\{ C_4 n^{2(k - 1)/(4k - 1)} - c_{\rho} n \bigr\} \leq d_3,
    \end{equ}
    where $d_3 > 0$ is a constant depending on $\rho$.  Thus the conditions \cref{d1,r2.2-a,r2.2-b}  hold.


    For the conditional second moment, by \cref{lem:5.1}, we have
    \begin{equ}
        \ML \conep{(X_I - X_I')^2}{ \F } \\
                                          & = \frac{1}{n} \sum_{i = 1}^n \conep{(X_i - X_i')^2}{ \F } \\
										  & = \frac{1}{n} \sum_{i = 1}^n X_i^2 - \frac{2}{n} \sum_{i = 1}^n X_i \psi_n (m_i(\mathbf{X})) + \frac{1}{n} \sum_{i = 1}^n \phi_n (m_i (\mathbf{X})) \\
										  & = \frac{1}{n} \sum_{i = 1}^n ( X_i^2 - \phi_n(m_i (\mathbf{X})) ) - 2 m(\mathbf{X}) \psi_{\infty} (m (\mathbf{X})) \\
                                          & \quad \quad + 2\phi_{\infty} ( m(\mathbf{X}) ) + r_2 (\mathbf{X}),
                                     \label{eq:gcw:t-04}
    \end{equ}
    where \(\psi_n, \phi_n\) and \(\phi_{\infty}\) are as defined  in \cref{lem:5.1}.
    By the Taylor expansion, we have
    \begin{equ}
        \label{eq:gcw:t-05}
        \bigl\lvert \phi_{\infty} ( m(\mathbf{X}) ) - 1  \bigr\rvert  = \bigl\lvert h''( m(\mathbf{X}) ) \bigr\rvert \leq C n^{-1 + 1/k} ( 1 + |W|^{2k-2} ),
    \end{equ}
    and
    \begin{equ}
        \label{eq:gcw:t-06}
        \bigl\lvert m(\mathbf{X}) \psi_{\infty} (m(\mathbf{X})) \bigr\rvert \leq C n^{-1/k} |W|^2 + C n^{-1} |W|^{2k},
    \end{equ}
    where $C>0$ is a constant depending only on $\rho$. By the definition of $(W, W')$ and \cref{eq:gcw:t-04,eq:gcw:t-05,eq:gcw:t-06}, with $\lambda = n^{-2+1/k}$, we have
    \begin{align*}
        \ML \biggl\lvert \frac{1}{2 \lambda} \conepb{(W - W')^2}{\F} - 1 \biggr\rvert\\
        & = \Bigl\lvert \frac{1}{2} \conepb{(X_I - X_I')^2}{\F} - 1 \Bigr\rvert \\
		& \leq \frac{1}{2} \biggl\lvert \frac{1}{n} \sum_{i = 1}^n ( X_i^2 - \phi_n (m_i(\mathbf{X})) ) \biggr\rvert + C n^{-1/k} \bigl(  1 + |W|^2 \bigr).
    \end{align*}
    Moreover,
    as $|X_i| \leq L$, we have
    \begin{align*}
        \biggl\lvert \frac{1}{2 \lambda} \conepb{(W - W')^2}{\F} - 1 \biggr\rvert \leq 2L^2 + 1 :=  d_0.
    \end{align*}
    Then \cref{d0} holds.
    By \cref{lem:5.2}, we have the condition \cref{r2.1-c} in \cref{r2.1} is satisfied.

    Hence, we have \cref{de2,d0,d1} and the conditions in \cref{r2.1,r2.2} are satisfied with
    $\delta_1 = \delta_2 = C n^{-1/k}$, $\tau_1 = \frac{2}{2k-1}$, and $ \tau_2 = 1 + \frac{2}{2k-1}$. By \cref{r2.1,r2.2}, we complete the proof of \cref{thm:3.2}.
\end{proof}
It suffices to proof \cref{lem:5.2}.
\begin{proof}
    [Proof of \cref{lem:5.2}]
    In this proof, we denote $C$ by a general positive constant depending only on $\rho$.
    By the Cauchy inequality, we have
    \begin{equ}
        \label{l5.2-01}
        \ML \E \biggl\lvert \frac{1}{n} \sum_{i = 1}^n \Bigl(X_i^2 - \conepb{X_i^2}{\F^{(i)}} \Bigr) \biggr\rvert \zeta(W, s) \\
        & \leq \biggl( \E \biggl\lvert \frac{1}{n} \sum_{i = 1}^n \Bigl(X_i^2 - \conepb{X_i^2}{\F^{(i)}} \Bigr) \biggr\rvert^2  \zeta(W, s)  \times \E\, \zeta(W, s)\biggr)^{1/2}.
    \end{equ}
    Expand the square term, and we have
    \begin{equ}
        \label{l5.2-02}
        \E \biggl\lvert \frac{1}{n} \sum_{i = 1}^n \Bigl(X_i^2 - \conepb{X_i^2}{\F^{(i)}} \Bigr) \biggr\rvert^2  \zeta(W, s)
        & = H_1 + H_2,
    \end{equ}
    where
    \begin{align*}
        H_1 & =   \frac{1}{n^2} \sum_{i = 1}^n \E \bigl\{\bigl(X_i^2 - \conepb{X_i^2}{\F^{(i)}} \bigr)^2 \zeta(W, s) \bigr\}, \\
        H_2 & =   \frac{1}{n^2} \sum_{i \neq j} \E \bigl\{\bigl(X_i^2 - \conepb{X_i^2}{\F^{(i)}} \bigr)  \bigl(X_j^2 - \conepb{X_j^2}{\F^{(j)}} \bigr) \zeta(W, s) \bigr\} .
    \end{align*}

    Recalling that $|X_i| \leq L$, we have
    \begin{equ}
        \label{l5.2-03}
        H_1 \leq 4 L^4 n^{-1} \E \zeta(W, s).
    \end{equ}

    As for $H_2,$ we first introduce some notations.
    For $i \neq j$, let $\E^{(i,j)}$ denote the conditional expectation given $\F^{(i,j)}$, where \(\F^{(i,j)}\) is as in \cref{eq:FiFij}.
    Note that
    \begin{align*}
        \E^{(i,j)} (X_i^2) & = \frac{ \iint x^2 \exp \Bigl( \frac{1}{2n} (x + y)^2 + (x + y) m_{ij}  \Bigr) \dd \rho(x) \dd \rho (y) }{  \iint \exp \Bigl( \frac{1}{2n} (x + y)^2 + (x + y) m_{ij}  \Bigr) \dd \rho(x) \dd \rho (y)  },
    \end{align*}
	where $m_{ij}:= m_{ij}(\mathbf{X}) = \frac{1}{n}\sum_{k\neq i, j} X_k$. Similar to \cref{lem:5.1}, we have for any $i \neq j$,
    \begin{equ}
        \label{l5.2-04}
        \Bigl\lvert \conepb{X_i^2}{ \F^{(i)} } - \E^{(i,j)} (X_i^2)   \Bigr\rvert \leq C n^{-1},
    \end{equ}
    where $C > 0$ depends only on $L$. Define
    \begin{equ}
        \label{l5.2-05}
    H_3 & =   \frac{1}{n^2} \sum_{i \neq j} \E \bigl\{\bigl(X_i^2 - \E^{(i,j)} (X_i^2) \bigr)  \bigl(X_j^2 - \E^{(i,j)} (X_j^2)  \bigr) \zeta(W, s) \bigr\} ,
    \end{equ}
    and then by \cref{l5.2-04,l5.2-05}, we have
    \begin{equ}
        \label{l5.2-06}
        \bigl\lvert H_2 - H_3 \bigr\rvert \leq C n^{-1} \E \zeta(W, s).
    \end{equ}

    We now move to give the bound of $H_3$. Define
    \begin{align*}
        W^{(i,j)} = W - n^{-1 + \frac{1}{2k}} (X_i + X_j).
    \end{align*}
    Let
    \begin{align*}
        q(w, s) =
        \begin{cases}
            G(w) - G(w - s), & w > s, \\
            G(w) , & 0 \leq w \leq s, \\
            0, & w < 0,
        \end{cases}
    \end{align*}
    and then $q(w , s) = \log \zeta (w, s)$ and $q'(w)$ is continuous on $\R$.
    Therefore, by the Taylor expansion, we have
    \begin{equ}
        \label{l5.2-07}
        q(W) - q(W^{(i,j)})
        & = (W - W^{(i,j)})  q' (W^{(i,j)}) \\
        & \quad + \frac{1}{2} (W - W^{(i,j)})^2 q'' (w_0)  ,
    \end{equ}
    where $w_0$ belongs to either  $(W, W^{(i,j)})$ or $(W^{(i,j)}, W)$. Note that $G(w) = C w^{2k}$ for some constant $C$, $|W| \leq L n^{\frac{1}{2k}}$ and $|W - W^{(i,j)}| \leq 2 L n^{-1 + \frac{1}{2k}}$. By the definition of $q$, we have
    \begin{align}
        \ML \Bigl\lvert  \bigl(W - W^{(i,j)}\bigr) q'\bigl(W^{(i,j)}\bigr) \Bigr\rvert \nonumber \\
        \label{l5.2-08}
        & \leq C n^{-1+\frac{1}{2k}} \bigl\lvert W^{(i,j)} \bigr\rvert^{2k-1}  \\
        & \leq C n^{-1+\frac{1}{2k}} \bigl( \bigl\lvert W \bigr\rvert^{2k - 1} + 1 \bigr) \nonumber
        \intertext{and}
        \ML \Bigl\lvert  \frac{1}{2} \bigl(W - W^{(i,j)}\bigr)^2 q''(w_0)  \Bigr\rvert \leq C n^{-1},
        \label{l5.2-09}
    \end{align}
    where $C$ depends only on $\rho$. Therefore, by \crefrange{l5.2-07}{l5.2-09} and using the fact that $|W| \leq Ln^{\frac{1}{2k}}$,
    we have
    \begin{equ}
        \label{l5.2-10}
        \bigl\lvert q(W) - q\bigl(W^{(i,j)}\bigr)  \bigr\rvert & \leq C n^{-1 + \frac{1}{2k}} ( |W|^{2k - 1} + 1 ) \\
        & \leq C.
    \end{equ}
    Observe that
    \begin{gather}
        \E^{(i,j)} \bigl\{\bigl( X_i^2 - \E^{(i,j)} (X_i^2) \bigr) \bigl( X_j^2 - \E^{(i,j)} (X_j^2)  \bigr) \zeta(W, s) \bigr\}
        = \zeta\bigl( W^{(i,j)}\bigr) M^{(i,j)}, \label{l5.2-11}
        \intertext{where}
        M^{(i,j) }  = \E^{(i,j)} \Bigl\{\bigl( X_i^2 - \E^{(i,j)} (X_i^2) \bigr) \bigl( X_j^2 - \E^{(i,j)} (X_j^2)  \bigr) e^{q(W) - q(W^{(i,j)})} \Bigr\}.  \nonumber
    \end{gather}
    Applying the Taylor expansion to the exponential function, we have
    \begin{equ}
        \label{l5.2-12}
        M^{(i,j)} = M_1^{(i,j)} + M_{2}^{(i,j)} + M_{3}^{(i,j)},
    \end{equ}
    where
    \begin{align*}
        M_1^{(i,j)} & =  \E^{(i,j)}\{ (X_i^2 - \E^{(i,j)} X_i^2) (X_j^2 - \E^{(i,j)} X_j^2)\}, \\
        M_2^{(i,j)} & =   \E^{(i,j)} \Bigl( (X_i^2 - \E^{(i,j)} X_i^2) (X_j^2 - \E^{(i,j)} X_j^2) \bigl\{ q (W) - q\bigl(W^{(i,j)}\bigr) \bigr\} \Bigr),
        \intertext{and}
        M_3^{(i,j)}  & = M^{(i,j) } - M_1^{(i,j)} - M_2^{(i,j)}.
    \end{align*}

    For $M_1^{(i,j)}$, since $ \E^{(i,j)} X_i^2 = \E^{(i,j)} X_j^2 $, we have
    \begin{align*}
        M_1^{(i,j)} & =  \E^{(i,j)} X_i^2 X_j^2 - \E^{(i,j)} X_i^2 \E^{(i,j)} X_j^2 \\
                    & = \frac{\iint x^2y^2 \exp \bigl( \frac{1}{2n}(x + y)^2 + (x + y) m_{ij} \bigr) \dd \rho(x) \dd \rho(y)}{ \iint \exp \bigl( \frac{1}{2n}(x + y)^2 + (x + y) m_{ij} \bigr) \dd \rho(x) \dd \rho(y) } \\
                    & \quad \quad -  \biggl( \frac{\iint x^2 \exp \bigl( \frac{1}{2n}(x + y)^2 + (x + y) m_{ij} \bigr) \dd \rho(x) \dd \rho(y)}{ \iint \exp \bigl( \frac{1}{2n}(x + y)^2 + (x + y) m_{ij} \bigr) \dd \rho(x) \dd \rho(y) } \biggr)^2  \\
                    & = M_{11}^{(i,j)} + M_{12}^{(i,j)},
    \end{align*}
    where
    \begin{equ}
        \label{l5.2-13}
        M_{11}^{(i,j)} & = \frac{\iint x^2y^2 \exp \bigl((x + y) m_{ij} \bigr) \dd \rho(x) \dd \rho(y)}{ \iint \exp \bigl((x + y) m_{ij} \bigr) \dd \rho(x) \dd \rho(y) }    \\
                       & \quad\quad -  \biggl( \frac{\iint x^2 \exp \bigl((x + y) m_{ij} \bigr) \dd \rho(x) \dd \rho(y)}{ \iint \exp \bigl((x + y) m_{ij} \bigr) \dd \rho(x) \dd \rho(y) } \biggr)^2\\
                       & = 0,
    \end{equ}
    and $M_{12}^{(i,j)} = M_1^{(i,j)} - M_{11}^{(i,j)}$. Similar to \cref{lem:5.1}, we have
    \begin{equ}
        \label{l5.2-14}
        |M_{12}^{(i,j)}| \leq C n^{-1}.
    \end{equ}
    By \cref{l5.2-13,l5.2-14}, we have
    \begin{equ}
        \label{l5.2-15}
        \bigl\lvert M_1^{(i,j)} \bigr\rvert \leq C n^{-1}.
    \end{equ}

    For $M_2^{(i,j)}$, by \cref{l5.2-07,l5.2-09}, we have
    \begin{align*}
        M_2^{(i,j)} & =  M_{21}^{(i,j) } + M_{22}^{(i,j)} ,
    \end{align*}
    where
    \begin{align*}
        M_{21}^{(i,j) } & =   n^{-1 + \frac{1}{2k}} q' \bigl( W^{(i,j)} \bigr) \E^{(i,j)} \{ (X_i^2 - \E^{(i,j)} X_i^2 ) (X_j^2 - \E^{(i,j)} X_j^2) (X_i + X_j)\} , \\
        M_{22}^{(i,j)} & =  \frac{1}{2} \E^{(i,j)} \{ (X_i^2 - \E^{(i,j)} X_i^2 ) (X_j^2 - \E^{(i,j)} X_j^2) (W - W^{(i,j)})^2 q''(w_0) \},
    \end{align*}
    and $w_0$ is as defined in \cref{l5.2-07}. By \cref{l5.2-09}, and recalling that $|X_i| \leq L$, we have
    \begin{align*}
        \bigl\lvert M_{22}^{(i,j)} \bigr\rvert \leq C n^{-1}.
    \end{align*}
    Similar to \cref{l5.2-15}, we have
    \begin{align*}
        \bigl\lvert \E^{(i,j)} \{(X_i^2 - \E^{(i,j)} X_i^2 ) (X_j^2 - \E^{(i,j)} X_j^2) (X_i + X_j) \}
        \bigr\rvert \leq C n^{-1}.
    \end{align*}
    Moreover, recalling  that $|W^{(i,j)}| \leq L n^{\frac{1}{2k}}$ and $|q'(W^{(i,j)})| \leq C n^{1 - \frac{1}{2k}}$, we have
    \begin{align*}
        \bigl\lvert M_{21}^{(i,j)} \bigr\rvert \leq C n^{-1}.
    \end{align*}
    Thus,
    \begin{equ}
        \label{l5.2-16}
        |M_2^{(i,j)}| \leq C n^{-1}.
    \end{equ}
    For $M_{3}^{(i,j)}$, by the Taylor expansion, noting again that $k \geq 2,~|W| \leq L n^{\frac{1}{2k}} $ and $|X_i| \leq L$ for $1 \leq i\leq n$, and by \cref{l5.2-08,l5.2-09},  we have
    \begin{equ}
        \label{l5.2-17}
        \bigl\lvert M_3^{(i,j)} \bigr\rvert 
        & \leq C \bigl\lvert q(W) - q(W^{(i,j)}) \bigr\rvert^2 e^{ q(W) - q(W^{(i,j)})  }\\
        & \leq C n^{-2 + 1/k} \bigl( |W|^{4k - 2} + 1 \bigr) \\
        & \leq C n^{-2/k} \bigl( |W|^{4} + 1 \bigr).
    \end{equ}
    By \cref{l5.2-12,l5.2-15,l5.2-16,l5.2-17}, we have
    \begin{align*}
        |M^{(i,j)} | \leq C n^{-2/k} ( |W|^4 + 1 ),
    \end{align*}
    substituting which to \cref{l5.2-11}, we have
    \begin{equ}
        & \E \bigl\lvert \E^{(i,j)} \bigl\{ \bigl( X_i^2 - \E^{(i,j)} (X_i^2) \bigr) \bigl( X_j^2 - \E^{(i,j)} (X_j^2)  \bigr) \zeta(W, s) \bigr\} \bigr\rvert \\
        & \leq C n^{-2/k} \E \bigl\{ (|W|^4 + 1 )  \zeta\bigl( W^{(i,j)}\bigr) \bigr\} \\
        & \leq C n^{-2/k} \E \bigl\{( |W|^4 + 1 ) \zeta(W, s)\bigr\}\\
        & \leq C n^{-2/k} (1 + s^4) \E \zeta(W, s),
        \label{l5.2-18}
    \end{equ}
    where in the last inequality we used \cref{lem:4.10} recalling the fact that \cref{eq:5RW,eq:6RW} are satisfied. By \cref{l5.2-18}, we have the term $H_3$ in \cref{l5.2-05} can be bounded by
    \begin{equ}
        \label{l5.2-19}
        |H_3| \leq C n^{-2/k} (1 + s^4) \E \zeta(W, s) . 
    \end{equ}
    By \cref{l5.2-01,l5.2-02,l5.2-03,l5.2-06,l5.2-19}, we complete the proof of \cref{l5.2-a}.
    \qedhere
\end{proof}

\subsection{Proof of Theorem 3.2}
In this subsection, we use \cref{r2.2} to prove the result.

\begin{proof}[Proof of \cref{thm:3.3}]

	For any $\sigma \in \Sigma, uv\in D$ and $s,t \in \{ 0,1 \}$, let $\sigma_{uv}^{st}$ denote the configuration $\tau\in \Sigma$, such that $\tau_i = \sigma_i$ for $i\neq u,v$ and $\tau_u = s$, $\tau_v= t$. Let $(\sigma_u', \sigma_v')$ be independent of $(\sigma_u,\sigma_v)$ and follow the conditional distribution
  \[
  \P(\sigma_u'=s, \sigma_v' = t| \sigma) = \frac{p(\sigma_{uv}^{st}) }{ \sum_{s,t\in \{0,1\}} p(\sigma_{uv}^{st}) }.
  \]
  Let $M = \sum_{i=1}^n \sigma_i$ and $M' = M - \sigma_u - \sigma_v + \sigma_u' + \sigma_v'$. Then, by \citet*{Ch16L}, $(M,M')$ is exchangeable. Also, by \citet*[][Proposition 2]{Ch16L}, we have
  \begin{align}
  \E\bigl(M-M'|\sigma \bigr) = {}&    L_1(m(\sigma))+ R_1(m(\sigma)),\lbl{eq:m1}\\
  \E\bigl((M-M')^2 | \sigma \bigr) = {}&   L_2(m(\sigma)) + R_2( m (\sigma)), \lbl{eq:m2}
  \end{align}
  where $m(\sigma) = M/n$ and
  \begin{eqnarray*}
      &  L_1(x) =  \frac{2(1-x)(x^2 - (1-x) e^{2\tau(x)}) }{ (1-x) + e^{2\tau(x)}}, \text{ for }  0 < x < 1, \\
      &  L_2(x) =  \frac{4(1-x)(x^2 + (1-x) e^{2\tau(x)}) }{ (1-x) + e^{2\tau(x)}}, \text{ for }  0 < x < 1, \\
    &  |R_1(x)| + |R_2(x)| \leq \frac{C}{ n}
  \end{eqnarray*}
  for some constant $C$.
  Next, we consider two cases. In the first case, $(J,h)\not\in\Gamma \cup \{(J_c,h_c)\}$, and in the second case, $(J,h)=(J_c,h_c).$
  \begin{description}[leftmargin=0cm]
      \item[\it Case 1. ]{\it  When $(J,h)\not\in \Gamma \cup\{(J_c,h_c)\}$. } Define $W = n^{-1/2}(M - n m_0)$ and $W'=n^{-1/2}(M' - n m_0)$; then, $(W,W')$ is also an exchangeable pair. Moreover, $$|W - W'| \leq 2 n^{-1/2}=:\delta. $$

    Note that $L_1(m_0)=0$ by observing $m_0^2 = (1-m_0)e^{2\tau(m_0)}$. Moreover, we have
    \[
    L_1'(m_0) = \frac{1}{ 2\lambda_0 } L_2(m_0) >0,
    \]
    where $\lambda_0  = (-1/H''(m_0)) - (1/2J)>0$.
    By the Taylor expansion, we have
    \[
    L_1(m(\sigma)) = L_1'(m_0) (m(\sigma)-m_0) +  \int_{m_0}^{m(\sigma)} L_1''(s)(m(\sigma ) - s ) ds.
    \]
    Let $\lambda = L_2(m_0)/(2n)$, and we have
    \[
    n^{-1/2} L_1(m(\sigma)) = \lambda \left(\lambda_0^{-1} W + r(W) \right),
    \]
    where
    \[
    r(W) = 2n^{1/2}L^{-1}_2(m_0) \int_{m_0}^{m(\sigma)} L_1''(s)(m(\sigma)-s) ds.
    \]
    Therefore, together with the definition of $(W,W')$ and \eqref{eq:m1}, we have
    \[
        \E(W-W'|W)= n^{-1/2}(L_1(m(\sigma)) + R_1(m(\sigma))) =\lambda (g(W) + R(W)),
    \]
    where
    \[
        g(W) = W/\lambda_0 \quad\mbox{ and }  \quad R(W) = r(W) + \frac{2 n^{1/2}}{L_2(m_0)}R_1(m(\sigma)).
    \]
    Thus, \crefrange{icon1}{icon4} hold for $g(w) = w / \lambda_0$. Furthermore, $\delta|g(W)| \leq {2} / {\lambda_0}$, as $n^{-1/2} |W| \leq 1$. 

    By \citet[][Lemma 1]{Ch16L}, there exist constants $C_0, C_1 > 0$ such that
    \begin{align}
        \label{eq:pr-03}
    |R(W)|  \leq {} &  C_0n^{-1/2}(W^2 + 1), \nn
    \end{align}
    and
    \[
    \left| \frac{1}{ 2\lambda } \E((W-W')^2 | W) - 1  \right| \leq C_1 n^{-1/2}(|W| + 1)
    \]
    and $|\hat{K}_1| = \frac{\Delta^2}{2\lambda} \leq 4 / L_2(m_0)$.
    Therefore, \cref{de1,de2,d0,d1} are satisfied with $\tau_1 = 1, \tau_2 = 2, \delta_1 = \delta_2 = O(1) n^{-1/2}$ and $d_0 = 4 / L_2(m_0)$ and $d_1 = 2 / \lambda_0$.

    It suffices to prove \cref{r2.2-a,r2.2-b}. By \cref{eq:pr-03}, we have
    for $|W| \leq \frac{\sqrt{n}}{2\lambda_0C_0}$,
    \begin{align}
        |R(W)| \leq \frac{1}{2} (|g(W)| + 1),
        \label{eq:pr-01}
    \end{align}
    and for $|W| > \frac{ \sqrt{n}}{2 \lambda_0 C_0}$, recalling that $|W| \leq 1$, we have $|R(W)| \leq C_0 (\sqrt{n} + 1)$. Then, \cref{r2.2-a} holds with $\alpha = 1/2, \, d_2 = C_0 (\sqrt{n} + 1)$ and $\kappa = \sqrt{n} / ( 2 \lambda_0 C_0)$.
    By \citet[][Lemma 2]{Ch16L}, when $(J, h) \not\in \Gamma \cup \left\{ (J_c, h_c) \right\},$ for any $u > 0$, there exists a constant $\eta > 0$ such that
    \begin{align*}
        \P(|m(\sigma)-m_0| \geq u) \leq C e^{-n \eta}
    \end{align*}
    for some constant $C$.
    Hence,
    \begin{align*}
        d_2 \P (|W| > \kappa ) & \leq C ( \sqrt{n} + 1) e^{-n \eta} .
    \end{align*}
    Note that $s_0 = \max \left\{ s : \delta sg^2(s) \leq 1 \right\}$, $g(w) = w / \lambda_0$, $d_1 = \frac{2}{\lambda_0}$  and $\delta = 2 n^{-1/2}$, then $s_0 = (\lambda_0/2)^{1/3} n^{1/6}$. Therefore, \cref{r2.2-b} is satisfied.
    By \cref{r2.2}, we have
    \begin{align*}
        \frac{\P(W > z)}{\P(Z_0 > z)} = 1 + O(1) n^{-1/2} (1 + z^3)
    \end{align*}
    for $0 \leq z \leq n^{1/6}$.


    \item[\it Case 2. ]{\it  When $(J,h) = (J_c,h_c)$. } Define $W=n^{-3/4}(M-n m_c)$ and $W'=n^{-3/4}(M'-nm_c)$; then, $(W,W')$ is an exchangeable pair. By \eqref{eq:m1}, we have
        \[
        \E(W-W'|W) = n^{-3/4} (L_1(m(\sigma)) + R_1(m(\sigma))).
        \]
        By \citet*[][p. 14]{Ch16L}, 
        we have
        \[
			L_1(m_c)= L_1'(m_c) = L_1''(m_c) =0, \quad L_1^{(3)} (m_c)= \frac{\lambda_c}{2}L_2(m_c),
        \]
        where $\lambda_c$ is given in \eqref{Yden}. Then, by the Taylor expansion, we have
        \[
        L_1(m(\sigma)) = \frac{L_1^{(3)}(m_c) }{ 6} (m(\sigma)-m_c)^3 +
        \frac{1}{ 6}\int_{m_c}^{m(\sigma)} L_1^{(4)} (s) (m(\sigma)-s)^3 ds.
        \]
        Then, taking $\lambda= L_2(m_c)/ (2n^{3/2})$, by \citet*[][Lemma 1]{Ch16L}, 
        we have
        \begin{align*}
        \E(W-W'|W) = \lambda (g(W) + R(W)), \nn
        \end{align*}
        where $g(W) = (\lambda_c/6)W^3$ and
        \[
            R(W) = \frac{ n^{3/4}}{2 L_2(m_c) }\int_{m_0}^{m(\sigma)} L_1^{(4)} (s) (m(\sigma)-m_c)^3 ds + \frac{2 n^{3/4}}{L_2(m_c)}  R_1(W).
        \]
        Hence, $G(w) = \frac{\lambda_c}{24 } w^4$.
        Based again on  \citet*[][Lemma 1]{Ch16L}, 
        for some constant $C$, we have
        \begin{align}
        |R(W)| \leq C n^{-1/4} (|W|^4 + 1) \leq Cn^{-1/4}(|g(W)|^{4/3} + 1 ),
        \label{eq:pr-r2}
        \end{align}
        and
        \[
        \left| \frac{1}{ 2 \lambda} \E((W-W')^2 | W) - 1  \right| \leq Cn^{-1/4}(|g(W)|^{1/3} + 1).
        \]

        As $|W-W'|\leq 2n^{-3/4}$ and $|W| \leq Cn^{1/4},$ it follows that there exist constants \(d_0\) and \(d_1\) such that $n^{-3/4}|g(W)| \leq d_1$ and $\hatK_1 =  (W-W')^2/(2\lambda) \leq d_0$. Thus, \cref{d0,d1} are satisfied. Furthermore, \cref{de1,de2} hold with $\delta = 2n^{-3/4}, \delta_1 = \delta_2 = O(1) n^{-1/4}$ and $\tau_1 = 1/3, \tau_2 = 4/3$.
        It suffices to show that \cref{r2.2-a} and \cref{r2.2-b} are satisfied.
        By \cref{eq:pr-r2}, there exists a constant $c > 0$ such that for $|W| \leq c n^{1/4}$,
        \begin{align*}
            |R(W)| \leq \frac{1}{2} (|g(W)| + 1).
        \end{align*}
        For $|W| \geq cn^{1/4}$, noting that $|W| \leq C n^{1/4}$, we have $|R(W)| \leq C n^{3/4}$.
        Thus, \cref{r2.2-a} is satisfied with $\alpha = 1/2$, $d_2 = C n^{3/4}$ and $\kappa = c n^{1/4}$.
        Furthermore, as $\delta = 2 n^{-3/4}$ and $g(w) = (\lambda_c/6)w^3$, we have $s_0 = (18/\lambda_c)^{1/7} n^{3/28}$.
        In addition, by  \citet*[][Lemma 2]{Ch16L}, 
        when $(J, h) = (J_c, h_c)$, for any $u > 0$, there exists a constant $\eta > 0$ such that
        \begin{align*}
            \P(|m(\sigma)-m_c| \geq u) \leq C e^{-n \eta}.
        \end{align*}
         Thus,
         \begin{align*}
             d_2 \P(|W| \geq \kappa) & \leq C n^{3/4} e^{-n\eta} \leq C e^{ - 2 s_0 d_1^{-1}\delta^{-1}}.
         \end{align*}
         Then, \cref{r2.2-b} holds.
         By \cref{r2.2}, we complete the proof of \cref{thm:3.3}. \qedhere
\end{description}
\end{proof}

\section*{Acknowledgements}
We would like to thank the  referees for their helpful comments which led to a much improved presentation of the paper.

\end{document}